\documentclass{amsart}
\usepackage{url}
\usepackage[usenames,dvipsnames]{xcolor}
\usepackage[ruled,vlined]{algorithm2e}
\usepackage{mathtools}
\usepackage{subcaption}
\usepackage
[pagebackref=true, colorlinks=true, allcolors=blue, allbordercolors=blue,
        backref,
]{hyperref}
\usepackage{url,latexsym, amscd, amsfonts, eucal, mathrsfs, amsmath, amssymb, amsthm, rotating, graphicx}
\usepackage{tikz,enumitem,caption,stmaryrd,amsfonts,amssymb,systeme,comment,graphicx,colonequals,faktor,xfrac}
\usetikzlibrary{matrix,positioning,arrows,shapes,chains,calc,automata}
\usetikzlibrary{decorations.pathreplacing,cd}
\usepackage[asymmetric]{geometry}

\newtheorem{thm}{Theorem}[section]

\newtheorem{ass}{Assumption}
\newtheorem{prop}[thm]{Proposition}

\newtheorem{lemma}[thm]{Lemma}
\newtheorem{cor}[thm]{Corollary}

\theoremstyle{definition}

\newtheorem{defn}[thm]{Definition}

\theoremstyle{remark}

\newtheorem{rk}[thm]{Remark}
\newtheorem{ex}[thm]{Example}

\newcommand\Q{\mathbb{Q}}

\newcommand\Cp{\mathbb{C}_p}

\newcommand\C{\mathbb{C}}
\newcommand\F{\mathbb{F}}
\newcommand\G{\mathbb{G}}

\newcommand\Z{\mathbb{Z}}
\newcommand\B{\mathbb{B}}
\newcommand\T{\mathbb{T}}

\newcommand\m{\mathfrak{m}}

\renewcommand{\L}{L}

\renewcommand{\O}{\mathcal{O}}

\newcommand\inv{\mathop{\rm inv}\nolimits}

\renewcommand\O{\mathcal{O}}

\newcommand\ab{\mathop{\rm ab}\nolimits}
\newcommand\ord{\mathop{\rm ord}\nolimits}

\newcommand{\Hom}{\operatorname{Hom}}

\newcommand{\Div}{\operatorname{Div}}

\newcommand{\Red}{\operatorname{Red}}
\newcommand{\reg}{\operatorname{reg}}

\newcommand{\an}{\operatorname{an}}

\renewcommand{\div}{\operatorname{div}}

\newcommand{\dlog}{\operatorname{dlog}}
\newcommand{\Lie}{\operatorname{Lie}}

\newcommand{\pair}[1]{\langle #1 \rangle}

\newcommand{\GL}{\operatorname{GL}}

\newcommand{\PGL}{\operatorname{PGL}}

\newcommand{\BP}{{\mathbb P}}

\newcommand{\calC}{\mathcal{C}}
\newcommand{\calB}{\mathcal{B}}
\newcommand{\calE}{\mathcal{E}}
\newcommand{\calD}{\mathcal{D}}

\def\presuper#1#2%
{\mathop{}%
	\mathopen{\vphantom{#2}}^{#1}%
	\kern-12\scriptspace%
	#2}

\newcommand{\Ab}{{\operatorname{Ab}}}
\newcommand\Abint{\presuper\Ab\int}
\newcommand{\lAb}{{\operatorname{Ab\ }}}
\newcommand\lAbint{\presuper\lAb\int}

\newcommand*\angles[1]{\langle #1 \rangle}
\newcommand*\prnths[1]{\left( #1 \right)}

\begin{document}
\setcounter{secnumdepth}{3} 
\title[Algorithms for hyperelliptic Mumford Curves]{Algorithms for hyperelliptic Mumford Curves:  \\ $p$-adic Uniformization, $p$-adic integrals and $p$-adic heights}

\author[Kaya]{Enis Kaya}
\address{\hspace{-.2in} E. Kaya, Department of Mathematics, Bilkent University, 06800 Ankara, T\"urkiye}
\email{enis.kaya@bilkent.edu.tr}

\author[Masdeu]{Marc Masdeu}
\address{\hspace{-.2in} M. Masdeu,  Departament de Matem\`atiques\\Universitat Aut\`onoma de
	Barcelona\\08193 Bellaterra, Barcelona, Catalonia}
\email{marc.masdeu@uab.cat}

\author[M\"uller]{J. Steffen M\"uller}
\address{\hspace{-.2in} J. S. M\"uller,  Bernoulli Institute,
	Rijksuniversiteit Groningen,  Nijenborgh 9,  9747 AG Groningen, The Netherlands}
\email{steffen.muller@rug.nl}

\author[van der Put]{Marius van der Put}
\address{\hspace{-.2in} M. van der Put,  Bernoulli Institute,
	Rijksuniversiteit Groningen,  Nijenborgh 9,  9747 AG Groningen, The Netherlands}
\email{m.van.der.put@rug.nl}

\begin{abstract}
Mumford curves generalize the Tate uniformization of elliptic curves with split multiplicative reduction
and provide $p$-adic analogues of the uniformization of Riemann surfaces.
In this paper, we present several algorithms for hyperelliptic Mumford curves.
For a given hyperelliptic Mumford curve $X$ defined over a finite extension of the field of $p$-adic
numbers for some $p\neq 2$, we first describe how to compute a $p$-adic Schottky group $W$ that uniformizes $X$;
this is based on our extension to Kadziela's approximation theorem. As applications,
we explain how to use this uniformization in order to compute $p$-adic Abelian integrals and $p$-adic Schneider heights on $X$;
the latter uses Werner's formula expressing the $p$-part of the Schneider height
in terms of theta functions.
We illustrate our algorithms with numerical examples computed using the computer algebra system {\tt SageMath}. 
\end{abstract}

\maketitle

\tableofcontents

\section{Introduction}\label{S:}

In 1959, Tate proved that an elliptic curve over a $p$-adic 
field $K$ with split multiplicative reduction can be uniformized. His
foundational ideas gave rise to the field of rigid analytic geometry. Building on Tate's ideas,
Mumford~\cite{Mu72}
showed in 1972 that a curve $C/K$ of genus $g\ge 2$ with split degenerate
reduction can be uniformized by a group of transformations acting on the
$p$-adic upper half plane. More precisely, after passing to a finite
extension, if necessary, the rigid analytification of such a curve $C$ is isomorphic to
$\Omega/W$, where $\Omega\subset \BP^1(\C_p)$ and $W\subset \PGL_2(K)$ is a
so-called ($p$-adic) Schottky group. Moreover, Manin and
Drinfeld~\cite{MD73} used $p$-adic theta functions to show that the Jacobian $J$ of a Mumford curve admits a rigid-analytic uniformization by a 
$p$-adic torus $(\G_m^\times)^g/Q_W$, analogous to the complex uniformization of Jacobians as
complex tori. These functions can also be used to reformulate and expand on
Mumford's results, see for instance the book by Gerritzen and van der
Put~\cite{GP80} and the more recent work~\cite{vdPT26} of van der Put and
Top. We summarize the required background on Schottky groups,
Mumford curves and their Jacobians in \S\ref{S:Schottky}.

Both Tate's and Mumford's uniformizations have proved invaluable tools in
the arithmetic geometry of curves, but while there is a simple algorithm to
compute the Tate uniformization explicitly and this has been used
in algorithmic work on elliptic curves (see for
instance~\cite{SW13, mtt}), algorithmic
results on Mumford curves have so far been sparse, despite many possible
applications, including the computation of abelian integrals and $p$-adic
heights, of isogenies and of explicit examples for the tame inverse Galois
problem. In particular, the first two have the potential to form the basis
of model-free versions of the linear and quadratic Chabauty method to
compute rational points on curves.

The goal of the present paper is to remedy this situation and to take a major step toward making Mumford curves more algorithmically accessible.

Our goals are two-fold:
\begin{enumerate} 
  \item\label{GoalI} Algorithms to compute the $p$-adic uniformization starting
    with $C$;
  \item\label{GoalII} Algorithms for arithmetic applications starting with $W$.
\end{enumerate}

\subsection{Previous work}\label{}
We briefly summarize previous algorithmic work on Mumford curves. Starting
with an equation of a hyperelliptic Mumford curve, Teitelbaum~\cite{Tei88}
found formulas for the periods of the Jacobian when $g=2$. More
recently, Chow and
Jarvis~\cite{CJ23} found a $p$-adic version of the classical
arithmetic-geometric mean method to compute periods of genus~2 curves via
Richelot isogenies. 
Kadziela~\cite{Kad07} developed an exponential algorithm to compute the
Schottky group under certain conditions. Starting with the Schottky
group, Morrison and Ren~\cite{MR15} gave algorithms to compute various
objects such as a good fundamental domain, the periods of the Jacobian and
a canonical embedding of the curve. These relied on a naive algorithm
(see~\S\ref{subsec:naive}) for
computing $p$-adic theta functions. More recently, Masdeu and Xarles found
a fast iterative algorithm for this task (see~\cite{MX25} and also~\S\ref{subsec:iterative}).
\subsection{Algorithms starting with the curve}\label{}
We take Kadziela's work as a starting point for~\eqref{GoalI}. Kadziela uses an
approach due to van der Put~\cite{vdP78} who showed that a hyperelliptic
Mumford curve $C$ can be uniformized by a Schottky group $W$ (called a
Whittaker group) with the following property:
there is a certain discontinuous group $\Gamma$
which uniformizes $\BP^1$ in an appropriate sense such that $W$ is an
index~2 subgroup of $\Gamma$. This induces a
degree~2 cover of rigid analytic spaces $\Omega/W\to \Omega/\BP^1$ corresponding to
$C\to \BP^1$, for a suitable subdomain $\Omega=\Omega_W\subset
\BP^1(\C_p)$. More precisely, there is a $p$-adic theta function $F$
depending on $W$ and a set of generators of $\Gamma$ such that $F$ maps the
set $\T$ of fixed points of these
generators to the set $\B_C$ of branch points of $C\to \BP^1$.
More recently, van der Put's work has been revisited and extended by van der Put and
Top~\cite{vdPT26}. 
Based on van der Put's observation, Kadziela developed a digit-by-digit algorithm that computes
$\T$ from $\B_C$ under fairly restrictive conditions on the position of
$\T$.
The function $F$ is an infinite product, and Kadziela's
main results are a first-order approximation as well as an analysis of
the error term when truncating the product.
Note that the problem is more complicated than simply computing preimages
of $\B_C$ under $F$, since $F$ itself depends on $W$.

In \S\ref{S:FixedFromRam}, we extend Kadziela's approach in various ways: First, we relax his
conditions on the position of $\T$ in~\S\ref{S:Initial}, allowing us to deal with more general hyperelliptic
Mumford curves. Second, we present a new method to compute $\T$ from $\B_C$
in~\S\ref{S:Hensel}. It applies
a version of multivariate Hensel lifting that only requires an
approximation to the Jacobian matrix and yields linear convergence;
see Theorem~\ref{thm:hensel}.
Combined with the iterative algorithm of Masdeu and Xarles, our work
allows to compute $W$ in many previously inaccessible situations; it also
makes it possible to obtain much larger precision than Kadziela's original
approach, which is necessary for some of our intended applications.

Furthermore, motivated by our applications, we explain how to compute the $p$-adic
uniformization of points on $C$ in \S\ref{S:unif_th}: given $P\in
C(K)$, we compute $z\in
\Omega(K)$ that maps to $P$. This requires an explicit description of the
field of meromorphic functions on $\Omega/W$; see also~\cite{vdPT26}.

\subsection{Algorithms starting with the Schottky group}\label{}
We now turn to~\eqref{GoalII}.
Let $J$ be the Jacobian of a Mumford curve $C/K$.
\subsubsection{Abelian integrals}\label{}
The abelian integral on $J$ is defined in terms of the
abelian logarithm $\log_J$ on the $p$-adic Lie group $J(K)$ (see~\cite{Zar96}): The logarithm of a
point in $J(K)$ is a linear function on the holomorphic differentials
on $J$, and the abelian integral between $P,Q\in J(K)$ of such a
differential $\omega$ is
\[
  \presuper\Ab\int_P^Q\omega = \log(Q)(\omega)-\log(P)(\omega) \in K\,.
\]
One can then pull back the logarithm and the integral to the curve.
On Mumford curves, the holomorphic differentials are generated by $\dlog u$
for certain $p$-adic theta functions $u$, so that the abelian logarithm 
essentially boils down to evaluating $\log u$. However, this is not
well-defined on $J(K)\simeq (K^\times)^g/Q_W$, where $Q_W$ is the period
matrix of $J$, and we have to correct by a term
that ensures periodicity with respect to the multiplicative lattice spanned
by $Q_W$, which we construct from a naive higher-dimensional generalization of the
$\mathcal{L}$-invariant of a Tate curve. See \S\ref{S:abelian} for
details.

A different algorithm to compute abelian integrals on hyperelliptic curves
is due to Katz and Kaya (see~\cite{KK22}), and we show that their algorithm
gives the same result as ours in an Example~\ref{E:AbEx}.
The main motivation for abelian integration is the method of Chabauty and
Coleman (see~\cite{MP12}) to compute the rational points on curves defined over number fields. This method has been
quite successful in practice, but one of its drawbacks is that it
requires a model of the curve, even when there is a natural uniformization
or modular interpretation available.
While our algorithm is restricted to Mumford curves and their Jacobians, it
has the advantage (compared to~\cite{KK22} as well as algorithms for
$p$-adic integration on curves of good reduction) that it only depends on the $p$-adic uniformization. This makes
it a natural candidate for a Chabauty--Coleman method that does not
require a model of the curve.

\subsubsection{$p$-adic heights}\label{}
Chabauty--Coleman requires the rank of the Jacobian to be less than the
genus. Using techniques from non-abelian $p$-adic Hodge theory,
Kim~\cite{kim05:motivic_fundamental_group,kim09:unipotent_albanese} has proposed an
ambitious research program to remove this condition, still using $p$-adic
integrals. Chabauty--Kim has recently been made explicit under certain
conditions, for instance when the
rank equals the genus and the Jacobian has Picard number $>1$
(see~\cite{BD18}), and has been used to compute the rational points on various
modular curves of arithmetic interest (see for instance \cite{BDMTV19,
BDMTV23}).
The main tool was a reformulation of Chabauty--Kim in terms of $p$-adic
height pairings, which is possible under the conditions considered
in~\cite{BD18}. 

Motivated by this and other applications, we develop a new algorithm to
compute $p$-adic heights on Jacobians of Mumford curves over a number field
$F$, see \S\ref{S:heights}.
There are different constructions of $p$-adic heights, but they can all be written as a sum of local
terms, one for each finite place of $F$. 
The local components away from $p$ are classical and easily described
using arithmetic intersection theory, but the local components above $p$
are more tricky. 
Our algorithm is based on a 
reformulation due to Werner~\cite{Wer96} of a construction of $p$-adic
heights due to Schneider~\cite{Sch82}~\footnote{While in good reduction,
all known constructions of $p$-adic heights are essentially equivalent,
in bad reduction the Schneider height is known to differ from other
constructions.}. 
All previous versions of algorithms for local $p$-adic height pairings
above $p$ required a model of $C$ (see~\cite{BBM17, BDMTV23, GM25, BKM25}),
which makes them difficult to apply for large genus. In contrast, Werner's
formula, and hence our algorithm, work
  directly on the rigid uniformization of the curve. We hope that this can
  be used to develop a model-free version of the quadratic Chabauty method
  in the future. 

  We expect that our algorithms for abelian integrals and $p$-adic heights
  will be particularly useful for modular curves, which often have split
  degenerate reduction at primes dividing the level, as well 
  as Shimura curves, which are often covered by Mumford curves;
  see~\cite{AM19}.

  We also note that another possible application of our algorithm for
  $p$-adic heights is to gather numerical evidence for a
  yet-to-be-formulated higher dimensional version of the $p$-adic version of the conjecture of Birch and
Swinnerton--Dyer for elliptic curves with split multiplicative reduction at
$p$ due to Mazur--Tate--Teitelbaum; see~\cite{mtt, BMS16}.

\subsubsection{Other applications}\label{}
 We mention two other possible applications: As shown by
 Kadziela~\cite{KadPaper}, we can find isogenies between the Jacobians of
 two Mumford curves  by finding $\Z$-linear relations between the
 logarithms of their period
 lattices, similar to the complex setting. Moreover, Bisatt and
 Dokchitser~\cite{BD21} use Mumford curves to prove that for every
 squarefree integer $N$ and every $g>0$, there is a Jacobian of dimension
 $g$ having tame mod $N$ Galois representations, and they apply this result to show that
 for all primes $p$ (satisfying a condition that is believed to always
 hold), there is a solution to the tame inverse
Galois problem for $\mathrm{GSP}_{2g}(\F_p)$, though they do not give
explicit examples.
Our algorithms should make it possible to find 
explicit solutions to this problem.

\subsection{Implementation and dependencies}\label{S:Imp}
We have implemented our algorithm in the computer algebra system~{\tt
SageMath}~\cite{sagemath}. The implementation can be found
at~{\url{https://github.com/mmasdeu/hyperellipticmumford}}.
Various examples of applications of our code are presented in
\S\ref{sec:examples}.

We rely on an
implementation of an algorithm  due to Morrison--Ren to move points into a
good fundamental domain of a Schottky group available from
{\url{https://arxiv.org/src/1309.5243}}. The implementation of the
algorithm of Masdeu and Xarles to compute $p$-adic theta functions is part
of the~{\tt darmonpoints} package, available at
{\url{https://github.com/mmasdeu/darmonpoints}}; our code also relies on various
other components of that package.
We use~{\tt Magma}~\cite{Magma} to compute local heights away from~$p$; for
Example~\ref{x039}
we need the code at {\url{https://github.com/emresertoz/neron-tate}}. 

\subsection{Generalizations}\label{}
All our results should admit extensions to superelliptic Mumford curves.
The theory of the $p$-adic uniformization of such curves has been developed by van Steen~\cite{VanSteen}
and Yelton~\cite{Yel23, Yel24}.


\subsection{Notation}\label{S:Not}

	In this article $K = (K,|\cdot|)$ denotes a finite extension of $\Q_p$,
where $p>2$, with valuation
ring $\O$, maximal ideal $\m$ and residue field $k\simeq \F_{p^n}$.
Let $\pi$ be a uniformizer of $\O$ and let $v$ be the discrete
valuation on $K$, normalized such that $v(\pi) = 1$. Throughout, we fix a
choice for $\infty\in\BP^1(K)$ and embeddings $K\hookrightarrow
\overline{K}\hookrightarrow \C_p$
 where $\overline{K}$ is an algebraic closure of $K$..

For us, an~\textit{open ball} in $\BP^1(\C_p)$ will be a subset
$$B(c,r)\colonequals \{z\in \BP^1(\C_p)\,:\, |z-c|<r\}\,;$$ the
corresponding~\textit{closed ball} is
$$B^+(c,r)\colonequals \{z\in \BP^1(\C_p)\,:\, |z-c|\le r\}\,.$$
We write $B^+$ for the closed ball corresponding to an open ball $B$.
Of course, both open and closed balls are both open and closed in the
$p$-adic topology.

A curve $C$ over a field $K$ is \textit{nice} if it is smooth, projective and geometrically integral.

\subsection*{Acknowledgements} 
We are grateful for useful discussions with
Alex Best, Raymond van Bommel, Tim Dokchitser, Timo Keller, Drew Sutherland, Jaap Top, Jan Vonk and
Xavier Xarles. Some of the work in this article was carried out in the
Nesin Mathematics Village and Centro de Ciencias de Benasque Pedro Pascual.
We would like to thank these institutes for the generous hospitality and
welcoming environment. E.K. was supported by NWO grant 613.009.124, 
FWO grant GYN-D9843-G0B1721N and TÜBITAK 2232-B fellowship 124C816 during various stages of this article. J.S.M. was
supported by NWO grant VI.Vidi.192.106.

\section{Positions in $\BP^1$}\label{S:position}
Much of this article is concerned with subgroups of $\PGL_2$ acting on
$\BP^1$. We will use the following notation.
\begin{defn}\label{D:}
Let $\Gamma\subset \PGL_2(K)$ be a subgroup.
  The set of~\textit{limit points} of $\Gamma$ is  the set of all points
  $z\in \BP^1(\C_p)$ such that there is an infinite sequence
  $(\gamma_n)$ of distinct elements of $\Gamma$ and $x\in \BP^1(\C_p)$ such that
  $\lim_{n\to\infty}\gamma_n(x)=z$. A point $z\in \BP^1(\C_p)$ is
  an~\textit{ordinary point} of $\Gamma$ if it is not a limit point. We
  call $\Gamma$~\textit{discontinuous} if it has ordinary points
  \footnote{This
  is enough, because $K$ is locally compact.}. We write
  $\Omega_\Gamma$ for the set of ordinary points of a discontinuous group
  $\Gamma$.
\end{defn}
We will always assume for every discontinuous group  that $\infty$
is not a limit point.
Throughout this section, let $g\ge 1$ and consider a subset $\T=\{a_0,b_0,\dots,
a_g,b_g\}\subset\BP^1(K)$ of size $2g+2$.

\begin{defn}\label{elts:gens_in_good_pos}
  For $i\in \{0,\ldots,g\}$, let  $s_i\in \mathrm{PGL}_2(K)$ be the unique
  matrix of order $2$ with fixed points $\{a_i,b_i\}$. 
  We say that  $\T$ is in {\it good position} if the group
  $$\Gamma(\T)\colonequals \pair{s_0,\dots,s_g}\le \PGL_2(K)$$
  is discontinuous and equal to the free product $\angles{s_0}*\cdots*\angles{s_g}$.
\end{defn}
Unfortunately, no useful {\it necessary and sufficient} condition for good position seems to be known.

\subsection{A necessary condition for good position}

We first describe a necessary condition, see Lemma~\ref{L:necescond}.
Let $S\subset \BP^1(K)$ be a finite set of size $\ge 3$, and set $$S^{(3)}
\colonequals S\times S \times S - 
\Delta \colonequals \{(s_1,s_2,s_3)\mid s_1 = s_2 \text{ or } s_1 = s_3 \text{ or }
s_2 = s_3\}.$$

\begin{defn}
For every $s = (s_0,s_1,s_2)\in S^{(3)}$ denote by $\gamma_{s}$ the unique automorphism
such that $\gamma_s(s_i) = i$ for all $i$. Composing this with the canonical reduction map $\Red \colon \BP^1(K) \to \BP^1(k)$ gives a surjective map
\[\Red_{s} \colon \BP^1(K) \to \BP^1(k).\]
Moreover, for each $s\in S^{(3)}$, the set
\[\BP_s \colonequals \big\{\Red_{s}^{-1}(t)\cap S \mid t \in \Red_{s}(S)\big\}\]
is a partition of $S$. The relation on $S^{(3)}$ defined by
\[s \sim s^\prime \ \ \ \Longleftrightarrow \ \ \ \BP_s = \BP_{s^\prime}\]
is an equivalence relation; let us denote the equivalence class of an element $s\in S^{(3)}$ by $[s]$. Consider the following graph:
\begin{itemize}
	\item vertices correspond to the classes $[s]$ for $s\in S^{(3)}$, and
	\item two vertices $[s],[s^\prime]$ are connected by an edge if and only
    if one can write $\BP_s = \{U_1,\dots,U_r\}$, $\BP_{s^\prime} = \{V_1,\dots,V_{r^\prime}\}$ with $U_1 = V_2\cup \cdots \cup V_{r^\prime}$, $V_1 = U_2\cup \cdots \cup U_{r}$.
\end{itemize}
It is a finite tree, called the {\em tree} of $S$ and denoted by $T(S)$.
\end{defn}

\begin{defn}
Write $S^{(3)} / \sim \ = \{[s_1],\dots,[s_n]\}$. The {\em reduction of $\BP^1$ with respect to $S$} is defined as
\[\Red_S\colon \BP^1(K) \to \BP^1(K)^n\to \BP^1(k)^n,\]
where the first map is
$\gamma_{s_1}\times\cdots\times \gamma_{s_n}$ and the second map is the canonical reduction.
\end{defn}

Define $Z = Z_S$ as the image $\Red_S(\BP^1(K))$. Then
\begin{enumerate}
	\item $Z$ is a union of $n$ lines $L_1,\dots,L_n$, two of which intersect in at most one ordinary double point, 
	\item the tree $T(S)$ is isomorphic to the intersection graph of the $L_i$'s,
	\item the restriction of $\Red_S$ to $S$ is injective, and each point on $\Red_S(S)$ lies on just one $L_i$.
\end{enumerate}

\begin{defn}
Let $L$ be a component of $Z$, let $[s]$ be the corresponding class in $S^{(3)} / \sim \ $, and take two distinct points $x,y\in S$. We say that $L$ {\em separates} $x,y$ if the partition $\BP_s$ contains the singleton sets $\{x\}$ and $\{y\}$.

\end{defn}
Equivalently, $L$ separates $x,y$ if and only if
\begin{itemize}
	\item $\Red_s(z) \neq \Red_s(x)$ for all $z\in S\setminus \{x\}$, and 
	\item $\Red_s(z) \neq \Red_s(y)$ for all $z\in S\setminus \{y\}$.
\end{itemize}
This condition is quite easy to check  algorithmically.

We get the following necessary condition for good position:

\begin{lemma}\cite[Chapter~IX, \S2.5.1]{GP80}\label{L:necescond}
  Suppose that $\T$ is in good position. Then every line $L$ in $\Red_\T(\BP^1(K))$ separates at most one pair $(a_j,b_j)$.
\end{lemma}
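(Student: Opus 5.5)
I would argue by contradiction: if some line separates two pairs, I will produce a nontrivial relation in $\Gamma(\T)$, or a failure of discontinuity. So suppose a component $L$ of $Z=\Red_\T(\BP^1(K))$, with class $[s]$, separates both $(a_i,b_i)$ and $(a_j,b_j)$ for some $i\neq j$. Applying $\gamma_s$, which conjugates $\Gamma(\T)$ and so preserves discontinuity and the free product structure, I may assume $\gamma_s=\mathrm{id}$. Then $\Red_s$ is the ordinary reduction $\BP^1(K)\to\BP^1(k)$. Separation means that $a_i,b_i,a_j,b_j$ reduce to four distinct points of $\BP^1(k)$, each of which is the reduction of no other point of $\T$.

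The key step is to show that $s_i$ and $s_j$ then lie in $\PGL_2(\O)$ and have good reduction. The matrix of order $2$ with fixed points $a,b$ is, up to scalars,
\[
\begin{pmatrix} a+b & -2ab \\ 2 & -(a+b)\end{pmatrix}
\]
(with the obvious modification if one of them is $\infty$). Its determinant is $-(a-b)^2$ up to sign. Since $|a_i-b_i|=1$ after normalising $a_i,b_i$ into $\O$ (possible up to a coordinate change in $\PGL_2(\O)$, which preserves reductions), and $p\neq 2$, the matrix lies in $\GL_2(\O)$. Its reduction $\bar s_i\in\PGL_2(k)$ is the involution with the distinct fixed points $\bar a_i,\bar b_i$, and similarly for $\bar s_j$.

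Hence the subgroup $\langle s_i,s_j\rangle$ lies in $\PGL_2(\O)$, and it suffices to show it is not discontinuous and free (as $\langle s_i\rangle*\langle s_j\rangle$, i.e.\ infinite dihedral). I would split into two cases.

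\emph{Case 1: the product $s_is_j$ has finite order.} Then $\langle s_i,s_j\rangle$ is finite dihedral. This contradicts $\Gamma(\T)=\langle s_0\rangle*\cdots*\langle s_g\rangle$, in which $s_is_j$ has infinite order.

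\emph{Case 2: $s_is_j$ has infinite order.} Then $\langle s_is_j\rangle$ is an infinite subgroup of the compact group $\PGL_2(\O)$. It therefore has a convergent sequence of distinct elements $\gamma_n\to\gamma$, and $\gamma_n\gamma_{n+1}^{-1}\to 1$ with these elements distinct. For every $x\in\BP^1(K)$ this gives $\gamma_n\gamma_{n+1}^{-1}(x)\to x$, so every point of $\BP^1(K)$ is a limit point. Since $\infty$ is assumed not to be a limit point, this is a contradiction. More cleanly, a discontinuous group has discrete image in $\PGL_2(K)$, and a discrete subgroup of a compact group is finite.

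The main obstacle is being careful with this discreteness step. The paper's definition of limit points uses orbits of points in $\BP^1(\C_p)$, so I would use the sequence $\gamma_n\gamma_{n+1}^{-1}$ directly as above, which avoids citing a discreteness theorem. A second point of care is the normalisation to $\O$ when some points are $\infty$; this is routine.

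The hypothesis that no other point of $\T$ shares a reduction with $a_i,b_i,a_j,b_j$ is stronger than this argument needs. I expect the cited result in \cite{GP80} to use the finer tree structure, but only the distinctness of the four reductions is required here.
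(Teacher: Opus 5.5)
The paper does not prove this lemma; it only cites \cite[Chapter~IX, \S2.5.1]{GP80}, so there is no internal argument to compare with. Your proof is correct and self-contained. Moving $s$ to $(0,1,\infty)$ gives one coordinate in which $\bar a_i\neq\bar b_i$ and $\bar a_j\neq\bar b_j$, hence $s_i,s_j\in\PGL_2(\O)$. The free product hypothesis makes $\langle s_i,s_j\rangle$ infinite dihedral. An infinite subgroup of the compact group $\PGL_2(\O)$ contains elements $\neq 1$ converging to $1$, and these make every point a limit point of $\Gamma$.

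Compared with the bare citation, your argument gives a stronger and more transparent statement. It uses only $\bar a_i\neq\bar b_i$ and $\bar a_j\neq\bar b_j$ in a common coordinate, not even that all four reductions are distinct. Equivalently, in good position no vertex of the Bruhat--Tits tree lies on both the geodesic from $a_i$ to $b_i$ and the geodesic from $a_j$ to $b_j$.

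A few small points of presentation:
\begin{enumerate}
\item Integrality of the matrix does not need $p\neq 2$: its entries are integral and its determinant $-(a-b)^2$ is a unit for any $p$. The condition $p\neq 2$ only ensures that $\bar s_i$ is a nontrivial involution, which you never use.
\item The elements $\gamma_n\gamma_{n+1}^{-1}$ should be thinned to a subsequence of distinct elements. This is possible because they are all $\neq 1$ and tend to $1$.
\item The convergence $\gamma_n\gamma_{n+1}^{-1}(x)\to x$ holds for every $x\in\BP^1(\C_p)$. So you can conclude directly that $\Gamma$ has no ordinary points, rather than invoking the standing assumption on $\infty$, which refers to the unconjugated group.
\end{enumerate}
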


Other necessary conditions are discussed in~\cite[\S7]{vdPT26}.

\subsection{Sufficient conditions}

We now discuss sufficient conditions  for a set to be in good position.

\begin{defn}\cite[\S7.1.1]{vdPT26}
  We say that the set $\T$ is in {\em closed disk position} 
  if the reduction $R$ of $\mathbb{P}^1$ with respect to $\{a_0,a_1,\dots ,a_g\}$ 
     has the property that the image $\bar{b}_j$ of each $b_j$ in $R$ coincides with the image
     $\bar{a}_j$ of $a_j$. In that case, the reduction $\Red_\T$ is obtained
     from $R$ by  replacing each $\bar{a}_j$ by an intersecting new line
     containing the images
     of $a_j$ and $b_j$. 
\end{defn}

For $j=0,\dots ,g$, let $\calB_j$ ($\calB^+_j$, respectively), denote the smallest open (closed, respectively) disk containing
$\{a_j,b_j\}$.   
   
\begin{lemma}\label{L:ballgood}\cite[\S1, Proposition]{vdP78}
  If $\T$ is in closed disk position, then $\T$ is in
  good position. Moreover, $\BP^1\setminus \calB_0\cup\cdots\cup \calB_g$ is a
  fundamental domain for $\Gamma(\T)$ in the sense of~\cite[\S1]{vdP78}.
\end{lemma}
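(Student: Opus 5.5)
The plan is a ping-pong (Schottky-type) argument. Set $F\colonequals \BP^1\setminus(\calB_0\cup\dots\cup\calB_g)$, and let $\calB_j^+$ be the closed disk. First I will show that closed disk position makes the disks $\calB_j^+$ pairwise disjoint. The reduction $R$ with respect to $\{a_0,\dots,a_g\}$ separates the $a_j$, since the reduction map is injective on the set. Because $\bar b_j=\bar a_j$, the point $b_j$ lies in the residue disk of $R$ containing $a_j$. These residue disks are pairwise disjoint open disks, so $\calB_j^+$ sits inside the residue disk $D_j$ of $a_j$, and the $D_j$ are pairwise disjoint.

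Next I will make each involution explicit. After a $\PGL_2(K)$ change of coordinates, which preserves disks, I take $a_j=0$ and $b_j=\infty$ with $\calB_j$ replaced by a suitable annulus picture. More concretely, with $a_j,b_j$ finite I write
\[
s_j(z)=\frac{(a_j+b_j)z-2a_jb_j}{2z-(a_j+b_j)}.
\]
Put $c=(a_j+b_j)/2$ and $r=|a_j-b_j|$. Then $s_j(z)-c=\frac{(r')^2}{z-c}$ with $r'=(a_j-b_j)/2$, and $|r'|=r$ since $p\neq 2$. From this $s_j$ maps $\{|z-c|\ge r\}=\BP^1\setminus\calB_j$ into $\calB_j^+$, and maps the "boundary" $\{|z-c|=r\}$ to itself. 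The key ping-pong property is then that $s_j$ sends $\BP^1\setminus\calB_j^+$ into $\calB_j$ (the open disk), and sends $\BP^1\setminus\calB_j$ onto $\calB_j^+$.

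The group structure comes from a ping-pong argument on reduced words. Take $w=s_{i_1}\cdots s_{i_n}$ with consecutive indices distinct and $n\ge1$, and a point $z\in F$ that lies in none of the closed disks, for instance $z\in D\setminus\bigcup\calB_j^+$ (such points exist in $\C_p$). Inductively, $s_{i_n}(z)\in\calB_{i_n}$. Since $\calB_{i_n}$ is disjoint from $\calB_{i_{n-1}}^+$, applying $s_{i_{n-1}}$ lands in $\calB_{i_{n-1}}$, and so on. Hence $w(z)\in\calB_{i_1}$, so $w(z)\neq z$ and $w\neq 1$. This shows $\Gamma(\T)=\angles{s_0}*\cdots*\angles{s_g}$.

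For discontinuity, the same induction shows that words of length $n$ map $F$ into nested disks. I will show these disks have radii shrinking to $0$ when words end in a fixed pattern, using that $s_j$ contracts $\BP^1\setminus\calB_j^+$ strictly into the smaller open disk. Then the translates $wF$ for distinct $w$ are essentially disjoint, with overlaps only along boundary circles where the tiling of $\bigcup_w wF$ matches the edges. Any point of the interior of $F$ has a neighborhood meeting only finitely many translates, so it is ordinary, and the group is discontinuous. The fundamental domain statement in the sense of \cite[\S1]{vdP78} then follows: $F$ meets every orbit of $\Omega$, since the nested-disk description covers every ordinary point, and interior identifications occur only through the boundary pairing $s_j\colon\partial\calB_j\to\partial\calB_j$. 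The main obstacle is this last step: controlling the boundary circles $\{|z-c_j|=r_j\}$, which are not contained in the open disks, and proving that the limit set is exactly the intersection of the nested disks. I would handle it by working with affinoids and the $\C_p$-points of the circles, mimicking Gerritzen--van der Put's treatment of Schottky groups with "good" fundamental domains.
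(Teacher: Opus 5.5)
The paper does not prove this lemma itself; it only cites van der Put. Your ping-pong strategy is the standard one, and the free-product part of your argument is correct.

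\textbf{Set-up caveat.} The discs $\calB_j$ depend on the coordinate. Your inclusion $\calB_j^+\subset D_j$ holds only if $\infty\notin D_0\cup\cdots\cup D_g$ (compare Remark~\ref{R:good}(1)). If $\infty\in D_j$, for instance $b_j=\infty$, the inclusion fails. So you should fix such a coordinate at the outset, possibly after an unramified extension, rather than moving $a_j,b_j$ to $0,\infty$.

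\textbf{Discontinuity is simpler than your sketch.} Put $U\colonequals\BP^1\setminus\bigcup_j\calB_j^+$. Your induction gives $\gamma U\subset\calB_{i_1}$, hence $\gamma U\cap U=\varnothing$, for every reduced $\gamma=s_{i_1}\cdots s_{i_n}\neq 1$. So for each $x$ at most one $\gamma$ satisfies $\gamma x\in U$. Since $U$ is open, no sequence $\gamma_nx$ with distinct $\gamma_n$ can converge into $U$. Thus every point of $U$ is ordinary, and $\T$ is in good position.

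\textbf{The genuine gap is the ``moreover'' part,} which you leave as an acknowledged obstacle. Two ingredients are missing.

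First, you must show that every $x\notin\Gamma F$ is a limit point. Such an $x$ lies in every disc $s_{i_1}\cdots s_{i_{n-1}}(\calB_{i_n})$ for some infinite reduced sequence $(i_n)$, and this disc contains $s_{i_1}\cdots s_{i_n}(y)$ for any $y\in U$. So you need the diameters of these discs to tend to $0$.

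Your stated reason, that $s_j$ maps $\BP^1\setminus\calB_j^+$ strictly into $\calB_j$, does not by itself give this. With $\rho_j\colonequals|a_j-b_j|$ one has $|s_jz-s_jw|=\rho_j^2|z-w|/(|z-c_j|\,|w-c_j|)$. On $\BP^1(\C_p)\setminus\calB_j^+$ the quantity $|z-c_j|$ can be arbitrarily close to $\rho_j$, so $s_j$ is not uniformly contracting there.

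The missing observation is this. For $k\neq j$ and $z\in\calB_k^+$ one has $|z-c_j|=|c_k-c_j|>\rho_j$, because disjointness of $\calB_j^+$ and $\calB_k^+$ means $|c_j-c_k|>\max(\rho_j,\rho_k)$. Hence $s_j$ scales distances on $\calB_k^+$ exactly by $d_{jk}^2$, where $d_{jk}\colonequals\rho_j/|c_j-c_k|<1$ is the analogue of \eqref{D:dij}. By induction the $n$-th disc has diameter at most $\rho_{i_n}\prod_{m=1}^{n-1}d_{i_mi_{m+1}}^2\to 0$. (Alternatively, you could use that all these discs are $K$-rational and $|K^\times|$ is discrete, but some quantitative input of this kind is required.)

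Second, the boundary circles need no affinoid machinery. The set $V_j\colonequals\{|z-c_j|=\rho_j\}$ is open in $\BP^1(\C_p)$, lies in $F$, and is stable under $s_j$. The same open-versus-closed ping-pong gives $\gamma F\subset\calB_{i_1}$, hence $\gamma F\cap F=\varnothing$, for reduced $\gamma$ of length at least $2$, while $F\cap s_jF=V_j$. Consequently:
\begin{itemize}
\item the only identifications on $F$ are the pairings $s_j|_{V_j}$;
\item the counting argument above, applied to the open sets $V_j$, shows that every point of $F$ is ordinary.
\end{itemize}
Together with the first ingredient, this gives $\Gamma F=\Omega_\Gamma$.
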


\begin{rk}\label{R:good}
  \hfill
  \begin{enumerate}
    \item Suppose that the image of $\infty$ is an
ordinary point with respect to $\Gamma$
and not on any of the lines connecting $\bar{a}_j,\bar{b}_j$.
Then $\T$ is in closed disk position if and only if 
\begin{equation}\label{}
       \calB^+_i\cap \calB^+_j=\emptyset\quad\text{ for all}\; i\neq j\,.
\end{equation}
    \item 
      There is a shorter way to formulate the closed disk condition: 
            The configuration $\Red_\T$ has end lines $L_0,\dots ,L_g$. Each $L_j$ has one point of intersection with the rest 
            of the configuration and contains the distinct images of $a_j$ and $b_j$.
    \item 
      In~\cite[Chapter~IX, \S2.5.2]{GP80} and~\cite[p.~76]{VanSteen}, other sufficient conditions  for  good position  are discussed. 
    \item In~\cite[\S5]{vdPT26}, van der Put and Top introduce a notion of~\textit{restricted}
      position and show that restricted implies good position
      (see~\cite[Theorem 5.4]{vdPT26}). In particular, the closed disk
      position is restricted (see~\cite[\S5.1.1]{vdPT26}).
  \end{enumerate}
\end{rk}

We would like to check algorithmically whether the set $\T$ is in closed
disk
position, and to generalize this position. We use an approach due to Kadziela~\cite{Kad07}. Applying a
transformation, if necessary, we may assume that $0,1,\infty\in \T$.
Kadziela showed that we may assume something stronger.

\begin{defn} \label{D:KadzPos}
We say that $\T$ is in {\em Kadziela position} if  
\begin{enumerate}
  \item $a_0 = 0$, $a_g=1$, $b_{g} = \infty$\,, and
  \item $0<|b_0|<|a_1|\le |b_1|\le\cdots\le |b_{g-1}|<1$\,.
\end{enumerate}
\end{defn}

By applying the following lemma, we may assume that $\T$ is in Kadziela position.

\begin{lemma}{\cite[Proposition~5.2]{Kad07}}\label{L:niceRos} 
  Let $S \subset \mathbb{P}^1(K)$ be any set for which $\#S = 2g + 2\geq 3$ 
  such that every line in $\Red_S(\mathbb{P}^1(K))$ separates at most two points.
  Then there exists $\sigma\in \PGL_2(K)$ such that
  $\sigma(S)$ is in Kadziela position.
\end{lemma}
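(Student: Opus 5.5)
The plan is to read off the normalization from two leaves of the tree $T(S)$. The ordering in Kadziela position involves a labelling of $S$ as $\{a_0,b_0,\dots,a_g,b_g\}$, so I will choose this labelling together with $\sigma$.

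First I determine what an end line looks like. If $[s]$ is a leaf of $T(S)$, the corresponding line $L$ has exactly one double point. The partition $\BP_s$ then has one block, the points of $S$ reducing to the double point, and every other block is a singleton $\{x\}$ with $x\in S$ on $L$. The triple $s$ itself shows that $\Red_s(S)$ has at least three elements. Hence $L$ carries at least two points of $S$, and it separates every pair of them. The hypothesis therefore forces each end line to carry exactly two points of $S$.

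Next I show there are at least two leaves. If $T(S)$ were a single vertex, all $\#S=2g+2\ge 4$ points would be singletons of one partition, and that line would separate more than two points, which is excluded. So $T(S)$ is a finite tree with at least two vertices and hence at least two leaves $L\neq L'$. Let $L$ carry the points $x,y$ and let $L'$ carry the points $z,w$.

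Now I define $\sigma\in\PGL_2(K)$ by $\sigma(z)=0$, $\sigma(x)=1$, $\sigma(y)=\infty$; it is $K$-rational since $S\subset\BP^1(K)$. The triple $(z,x,y)$ lies in the class of $L$. It reduces $x,y$ to distinct singletons and $z$ to the remaining block, which is the double point. So in the coordinate $\sigma$, every $t\in S\setminus\{x,y\}$ has $\Red(\sigma(t))=\Red(0)=0$, that is, $|\sigma(t)|<1$.

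To get the strict inequality at the bottom I use the leaf $L'$. It corresponds to the smallest closed disk around $z$ that contains another point of $S$. Because $L'$ is a leaf carrying only $z$ and $w$, this disk contains no other point of $S$. In the coordinate $\sigma$ this says that $0<|\sigma(w)|<|\sigma(t)|$ for all $t\in S\setminus\{x,y,z,w\}$.

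I then set $a_0=z$, $b_0=w$, $a_g=x$, $b_g=y$. I order the remaining $2g-2$ points by nondecreasing $|\sigma(\cdot)|$, breaking ties arbitrarily, and label them consecutively $a_1,b_1,\dots,a_{g-1},b_{g-1}$. The chain $0<|b_0|<|a_1|\le|b_1|\le\cdots\le|b_{g-1}|<1$ then holds.

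The main obstacle is the identification of tree vertices with disks in the chosen coordinate. Concretely, I must check that a leaf $L'$ not equal to the leaf sent to $\infty$ corresponds exactly to the closed disk $B^+(0,|\sigma(w)|)$, and that this disk meets $\sigma(S)$ only in $\{0,\sigma(w)\}$. I would prove this directly from the definition of $\Red_s$ for a triple $s=(z,w,u)$ with $u\notin L'$. Normalizing $z\mapsto 0$ and $w\mapsto 1$ via the scaling $t\mapsto t/\sigma(w)$, the triple sends every other point of $S$ to $\infty$ in the reduction. This means $|\sigma(t)|>|\sigma(w)|$ for all $t\ne z,w$, which is exactly the strict inequality needed.
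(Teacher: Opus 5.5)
Your proof is correct and is essentially the paper's construction: the paper's composite $\sigma_2\sigma_1\sigma_0$ also sends a pair separated by one line to $1,\infty$ and one point of a second separated pair to $0$, so it coincides with your single $\sigma$. Your version is more complete than the paper's sketch. You take the two lines to be distinct leaves of $T(S)$, each carrying exactly two points, with the two pairs disjoint because each point of $S$ lies on only one line. That choice is what justifies $|\sigma(t)|<1$ and the strict inequality $|b_0|<|a_1|$, via the triples $(z,x,y)$ and $(z,w,y)$.
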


\begin{proof}
	We only sketch the construction of the required transformation $\sigma$.
  First, we may assume without loss of generality that $S =
  \{0,y_1,\dots,y_{2g-1},1,\infty\}$ with $|y_i|\leq 1$ for all $i$. Then
  there is a line in $\Red_S(\mathbb{P}^1(K))$ that separates some $x,y\in S$ with $x,y\neq \infty$. Set
	\[\sigma_1 = 
	\begin{pmatrix}
		0 & x-y \\
		1 & -y
	\end{pmatrix}\]
	which transforms $x,y$ to $1,\infty$, respectively. There is a line in
  $\Red_{\sigma_1(S)}(\mathbb{P}^1(K))$ that separates some $u,v\in \sigma_1(S)$ with $\{u,v\}\cap \{1,\infty\} = \emptyset$. Set
	\[\sigma_2 = 
	\begin{pmatrix}
		0 & -v \\
		1 & 1-v
	\end{pmatrix}\]
	which transforms $v,1,\infty$ to $0,1,\infty$, respectively. Finally, the map $\sigma = \sigma_2\sigma_1$ has the desired property.	
\end{proof}

For a set $S$ as in Lemma~\ref{L:niceRos}, determining an explicit transformation which maps $S$ to a set that is in Kadziela position is easy. First of all, there are precisely $(2g+2)\times (2g+1)\times (2g) $ transformations $\sigma$ with the property that $\{0,1,\infty\}\subset \sigma(S)$, and for each one of these, one can check if $\sigma(S)$ satisfy the second condition in Definition~\ref{D:KadzPos}. When $\#S = 2g + 2$ is small, for instance when $g=2$, this brute-force method works quite well, and this is what we do in practice. In general, one can make use of the following algorithm whose recipe is based on the proof of Lemma~\ref{L:niceRos}:

\begin{algorithm}
	\caption{\bf Getting Kadziela position}
	\label{A:Trans_Kad_Pos}
  \KwIn{A subset $S \subset \mathbb{P}^1(K)$ with $2g + 2\geq 3$ elements
  such that every line in $\Red_S(\mathbb{P}^1(K))$ separates at most two points.}
	\KwOut{A linear fractional transformation $\sigma\in \PGL_2(K)$ with the property that
    $\sigma(S)$ is in Kadziela position. }
	
	\begin{enumerate}
		\item Find $\sigma_0\in \PGL_2(K)$ such that
		\[\sigma_0(S) = \{0,y_1,\dots,y_{2g-1},1,\infty\},\ \ \ |y_i|\leq 1 \text{ for all } i.\]
		\item Find a finite pair $\{x,y\}\subset \sigma_0(S)$ that is separated by a line in
      $\Red_{\sigma_0(S)}(\mathbb{P}^1(K))$, 
      and form the map $\sigma_1 = 
		\begin{pmatrix}
			0 & x-y \\
			1 & -y
		\end{pmatrix}$.
		\item Find a pair $\{u,v\}\subset \sigma_1\sigma_0(S)$ with $\{u,v\}\cap \{1,\infty\} = \emptyset$
      that is separated by a line in
      $\Red_{\sigma_1\sigma_0(S)}(\mathbb{P}^1(K))$, and form the map $\sigma_2 = 
		\begin{pmatrix}
			0 & -v \\
			1 & 1-v
		\end{pmatrix}$.
		\item Return $\sigma = \sigma_2\sigma_1\sigma_0$.
	\end{enumerate} 
\end{algorithm}

\begin{cor}\label{C:goodKad}
	If $\T$ is in good position, then
	there exists $\sigma\in \PGL_2(K)$ such that
	$\sigma(\T)$ is in Kadziela position.
\end{cor}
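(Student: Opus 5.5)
The plan is to combine the necessary condition of Lemma~\ref{L:necescond} with the normalization result of Lemma~\ref{L:niceRos}, with $S=\T$. Since $\T\subset\BP^1(K)$ has $2g+2\ge 4\ge 3$ elements, the only thing to check is that $\T$ satisfies the hypothesis of Lemma~\ref{L:niceRos}. That lemma then produces $\sigma\in\PGL_2(K)$ with $\sigma(\T)$ in Kadziela position, which is the assertion.

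\textbf{Step 1.} Assuming $\T$ is in good position, Lemma~\ref{L:necescond} says that every line $L$ of $Z_\T=\Red_\T(\BP^1(K))$ separates at most one of the pairs $(a_j,b_j)$. The hypothesis of Lemma~\ref{L:niceRos} is that every line separates at most two points. I read this, as in Kadziela's original setting, as a statement about the fixed-point pairs, i.e.\ no line separates two different pairs $\{a_i,b_i\}$ and $\{a_j,b_j\}$. Under this reading, Lemma~\ref{L:necescond} gives the hypothesis verbatim.

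\textbf{Step 2 (main obstacle).} The delicate point is whether Lemma~\ref{L:niceRos} really needs only this weaker, pair-wise hypothesis, rather than the literal condition that each partition $\BP_s$ has at most two singletons. To settle it, I would go through the sketched proof of Lemma~\ref{L:niceRos} and record what it actually uses.

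\begin{itemize}
\item The first step needs some line separating two finite points $x,y$ of $S$. Such a line always exists, since $T(\T)$ is a finite tree with an end vertex. For an end line $L$, the partition $\BP_s$ consists of one block (the points reached through the unique neighbouring line) together with singletons. Because every point of $\Red_\T(\T)$ lies on exactly one line, $L$ carries at least two singletons.
\item In general an end line carries at least two singletons and there are at least two end lines. So one can choose the pairs $\{x,y\}$ and $\{u,v\}$ in the two steps on distinct end lines, avoiding $1,\infty$ in the second step.
\item The inequalities $0<|b_0|<|a_1|\le\cdots\le|b_{g-1}|<1$ must then come out correctly. This is exactly where Lemma~\ref{L:necescond} enters: since each line separates at most one pair $(a_j,b_j)$, the pairs are nested rather than collapsing onto a single line, and this forces the chain of absolute values after labelling.
\end{itemize}

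If the literal reading of Lemma~\ref{L:niceRos} were needed, I would first try to show directly that good position forces at most two singletons on every line. The intended route here is via the explicit fundamental domain of Lemma~\ref{L:ballgood}, or the description of $\Red_\T$ in Remark~\ref{R:good}(2), in which each end line contains exactly the images of one pair $a_j,b_j$.
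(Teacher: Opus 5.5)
Your overall plan is the paper's (deduce the hypothesis of Lemma~\ref{L:niceRos} from Lemma~\ref{L:necescond}), but the bridge you build between the two lemmas fails. You weaken the hypothesis of Lemma~\ref{L:niceRos} to ``no line separates two different pairs $(a_i,b_i)$, $(a_j,b_j)$''. You then assert, in the third bullet of Step~2, that this still forces Kadziela position. It does not, because the pairwise condition puts no bound on the number of singletons on a line.

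For a counterexample, take $K=\Q_p$ with $p\ge 5$, $g=2$, and
\[
a_0=0,\quad a_1=1,\quad a_2=2,\quad b_0=1/p,\quad b_1=1/(2p),\quad b_2=1/(3p).
\]
Then $\Red_\T$ has exactly two lines. Reducing $z$ gives the partition $\{a_0\},\{a_1\},\{a_2\},\{b_0,b_1,b_2\}$. Reducing $1/(pz)$ gives the partition $\{b_0\},\{b_1\},\{b_2\},\{a_0,a_1,a_2\}$. No line separates any pair $(a_j,b_j)$, so your hypothesis holds, yet every line carries three singletons.

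In Kadziela position, all points other than $1,\infty$ lie in the open unit disc. So the line attached to $(0,1,\infty)$ has partition $\{1\},\{\infty\},\{\text{rest}\}$, with exactly two singletons. Any $\sigma\in\PGL_2(K)$ transports lines together with their partitions, so no $\sigma$ works. Hence the weakened Lemma~\ref{L:niceRos} is false, and Step~2 cannot be completed as stated. This $\T$ is of course not in good position; the point is that you must extract more from good position than the pairwise statement.

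The paper's one-line proof reads Lemma~\ref{L:necescond} in the stronger form: every line separates at most one pair of points, and that pair is some $(a_j,b_j)$. In particular each line carries at most two singletons, which is verbatim the hypothesis of Lemma~\ref{L:niceRos}. Your fallback via Lemma~\ref{L:ballgood} or Remark~\ref{R:good}(2) cannot supply this in general. Those results describe closed disk position, which is strictly smaller than good position (genus~$2$ types (b) and (c)).

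If you want to prove the stronger form yourself, there is a short argument. Suppose a line $L$ has singletons $x\in\{a_i,b_i\}$ and $x'\in\{a_j,b_j\}$ with $i\ne j$.
\begin{enumerate}
\item The partner of $x$ lies in another block of $L$. So the geodesic from $a_i$ to $b_i$ in the Bruhat--Tits tree passes through the vertex $v_L$ of $L$, and likewise for $j$.
\item Each $s_k$ is $K$-conjugate to $z\mapsto -z$, which preserves every disc centred at $0$. So $s_k$ fixes its geodesic pointwise, and hence $s_is_j$ fixes $v_L$.
\item A discontinuous group is discrete, and vertex stabilisers in $\PGL_2(K)$ are compact. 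So $\Gamma\cap\operatorname{Stab}(v_L)$ is finite.
\item This contradicts the fact that $s_is_j$ has infinite order in $\langle s_i\rangle*\langle s_j\rangle$.
\end{enumerate}
Thus all singletons of $L$ come from a single pair, and Lemma~\ref{L:niceRos} applies as stated.
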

\begin{proof}
	By Lemma~\ref{L:necescond}, a set in good
	position satisfies the condition of Lemma~\ref{L:niceRos}.
\end{proof}

The unique matrices $s_i\in \PGL_2(K)$ of order 2 fixing $a_i$ and $b_i$ are as follows:
\[s_0 = 
\begin{pmatrix}
b_0 & 0 \\
2 & -b_0
\end{pmatrix}, \text{ fixing } \{0,b_0\}; \ \ \ \ \ \ \ 
s_g = 
\begin{pmatrix}
	1 & -2 \\
	0 & -1
\end{pmatrix}, \text{ fixing } \{1,\infty\};
\]
\[s_i = 
\begin{pmatrix}
a_i+b_i & -2a_ib_i \\
2 & -(a_i+b_i)
\end{pmatrix}, \text{ fixing } \{a_i,b_i\},\	 i=1,\dots,g-1.\]
For $i=0,\dots,g-1$, the open disk $\calB_i$ has center $c_i$ and radius $r_i$,
where
\begin{equation}\label{rici}
c_i = 
\begin{cases} 
b_0/2 & \text{if } i = 0, \\
(a_i+b_i)/2   & \text{if } i = 1,\dots,g-1, 
\end{cases} \ \ \ 
r_i = 
\begin{cases} 
b_0/2 & \text{if } i = 0, \\
(a_i-b_i)/2   & \text{if } i = 1,\dots,g-1. 
\end{cases}
\end{equation}
Following Kadziela, we define
\begin{equation}\label{D:dij}
d_{ij} \colonequals \frac{|r_i|}{|c_i-c_j|},\ \ \ i,j\in\{0,\dots,g-1\}
  \text{ with } i\neq j\,.
\end{equation}
\begin{defn}\label{D:}
  We say that $\T$ is in~\textit{strong Kadziela position} if $\T$ is in Kadziela position and 
  we have $d_{ij}<1$ for all  $i\neq j$ in
  $\{0,\ldots,g-1\}$. 
\end{defn}
 The condition $d_{ij}<1$ for all $i\ne j$ means that the set 
$\T'=\{0,b_0/p,a_1/p,\ldots,1/p,\infty\}$ is in closed disk
 position. 
Since $\T$ is in good position if and only if $\T'$ is, Lemma~\ref{L:ballgood}
implies:
\begin{cor}\label{P:Kad} \cite[Theorems~5.3-5.7]{Kad07} 
  Suppose that $\T$ is in strong Kadziela position.	Then $\T$ is in good position.		
\end{cor}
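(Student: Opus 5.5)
The plan is to reduce Corollary~\ref{P:Kad} to Lemma~\ref{L:ballgood} through a rescaling, as the remark just before the statement indicates. Let $\lambda\in K^\times$ be chosen with $|\lambda|$ slightly larger than $1$; when $K/\Q_p$ is ramified one takes $\lambda=\pi^{-1}$, or $\lambda=p^{-1}$ as in the text. Put $\mu(z)=\lambda z$ and $\T'=\mu(\T)$. Conjugation by $\mu$ sends each $s_i$ to the involution with fixed points $\mu(a_i),\mu(b_i)$. Hence $\Gamma(\T')=\mu\Gamma(\T)\mu^{-1}$. Discontinuity and the free product decomposition are preserved under conjugation, so $\T$ is in good position if and only if $\T'$ is. It therefore suffices to show that $\T'$ is in closed disk position.

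For this I will use the criterion of Remark~\ref{R:good}(1), namely that the closed disks $\calB^+_i{}'$ are pairwise disjoint. The pairs of $\T'$ are $\{0,\lambda b_0\}$, $\{\lambda a_i,\lambda b_i\}$ for $1\le i\le g-1$, and $\{\lambda,\infty\}$. Two kinds of disjointness have to be checked.

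\begin{enumerate}
\item \emph{Finite pairs among themselves.} For $i,j\le g-1$, the smallest closed disk containing $\{\lambda a_i,\lambda b_i\}$ has center $\lambda c_i$ and radius $|\lambda||r_i|$; here one uses $p\ne2$, so that $|2|=1$. Two closed disks in an ultrametric space are disjoint exactly when the distance between their centers exceeds both radii. This condition reads $|\lambda| d_{ij}<1$ and $|\lambda| d_{ji}<1$. Because $K$ is discretely valued, $d_{ij}<1$ gives $d_{ij}\le |\pi|^{\epsilon}$ for a suitable value-group gap. Choosing $|\lambda|$ as the smallest element of the value group above $1$ then keeps the inequalities strict. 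I expect this value-group bookkeeping to be the main technical point: one must make sure the chosen $\lambda$ really works. If needed, one can instead pass to a finite extension where $|\lambda|$ can be taken arbitrarily close to $1$, since good position is insensitive to base extension.

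\item \emph{The pair $\{\lambda,\infty\}$ against the others.} The ``disk'' through $\infty$ has to be interpreted via the choice of $\infty$. The clean way is to apply Remark~\ref{R:good}(2) directly. The Kadziela inequalities $|b_0|<|a_1|\le\cdots\le|b_{g-1}|<1$ show that every finite point of $\T'$ other than $\lambda$ has absolute value $\le |\lambda||b_{g-1}|<|\lambda|$, where this holds provided $|\lambda||b_{g-1}|$ does not reach $|\lambda|$, which is automatic. Consequently $\lambda$ and $\infty$ reduce to distinct points on an end line meeting the rest of the configuration in a single point. For the finite pairs, the disjointness from the first item shows that each pair $\{\lambda a_i,\lambda b_i\}$ lies on its own end line. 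For $i=0$, one uses that $\{0,\lambda b_0\}$ is separated from the others, because $|b_0|<|a_1|$ together with $d_{0j}<1$.
\end{enumerate}

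With closed disk position established for $\T'$, Lemma~\ref{L:ballgood} shows that $\T'$ is in good position. By the conjugation argument above, $\T$ is then in good position as well.
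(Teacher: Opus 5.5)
Your route is the paper's: dilate $\T$, observe that the dilated set is in closed disk position, apply Lemma~\ref{L:ballgood}, and transfer good position back by conjugation. Your end-line verification in item~2 is correct. You are also right to abandon Remark~\ref{R:good}(1), since its hypothesis on $\infty$ fails when $\infty\in\T'$.

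There is one slip in item~1 that you should fix.

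\begin{enumerate}
\item Dilation by $\lambda$ multiplies the radii \emph{and} the distance between the centers by $|\lambda|$. So disjointness of the two closed disks reads $|\lambda||c_i-c_j|>|\lambda|\max(|r_i|,|r_j|)$, which is exactly $d_{ij}<1$ and $d_{ji}<1$. The factor $|\lambda|$ cancels, and the condition is literally the strong Kadziela hypothesis.
\item Your reformulation $|\lambda|d_{ij}<1$ is therefore wrong. The value-group step built on it is also wrong: for $|\lambda|=|\pi|^{-1}$ and $d_{ij}=|\pi|$ you only get $|\lambda|d_{ij}=1$, not a strict inequality.
\end{enumerate}

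Nothing breaks, because the correct condition holds outright and your finite-extension fallback is not needed. But the ``main technical point'' you anticipate does not exist.

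In fact the dilation does no work at all, and neither does the paper's dilation by $1/p$. The ratio $d_{ij}$ is invariant under $z\mapsto\lambda z$. Closed disk position is defined via $\Red$, so it is $\PGL_2(K)$-invariant. You can therefore verify it for $\T$ itself.
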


Kadziela's main approximation theorem~\cite[Theorem 6.10]{Kad07} and the
resulting algorithm (outlined in~\cite[\S6.2]{Kad07}) assume that $\T$ is in
strong Kadziela position. We will generalize this below in~\S\ref{S:Initial} by only
assuming the following weaker condition:

\begin{defn}\label{D:}
  We say that $\T$ is in~\textit{weak Kadziela position}
   if $\T$ is in Kadziela position and 
  we have $d_{ij} \leq 1$ and $d_{ij}d_{ji} < 1$ for all  $i\neq j$ in
  $\{0,\ldots,g-1\}$. 
\end{defn}
The first condition means that $\calB_i\cap \calB_j= \varnothing$ for all $i\ne j$. 
In contrast to strong Kadziela position, not every set in weak Kadziela
position is also in good position, see~\cite[\S7]{vdPT26}. 
\begin{rk}\label{R:}
It is obvious that strong Kadziela implies weak Kadziela and that there are sets that are in
  Kadziela position, but are not in weak Kadziela position.
\end{rk}

\subsection{Position in genus~2}\label{S:posg2}
Suppose that $g=2$ and that $\T=\{0,b_0,a_1,b_1,1,\infty\}$ is in good
position. Then by~\cite[Chapter~IX, (2.5.3)]{GP80} and \cite[\S7.2]{vdPT26}
there are three different types of position, called (a), (b) and (c), where (a) is the closed
disk position. Suppose that $\T$ is in
Kadziela position. Then (a) is precisely the strong Kadziela position.
By~\cite[Remark~8.4]{vdPT26}, $\T$ is of type (b) if it is not in strong
Kadziela position and satisfies
\begin{equation}\label{caseb}
  0 < |b_0 | < |b_1 | = |a_1 | < 1\,.
\end{equation}
Similarly,~\cite[Observation 8.6]{vdPT26} says that $\T$ is of type (c) if
it is not in strong Kadziela position and satisfies
\begin{equation}\label{casec}
0 < |b_0 | < |b_1 | < |a_1 | < 1\,.
\end{equation}

\begin{prop}{\cite[Proposition~7.3]{vdPT26}}
The set $\T$ is in good position if it's of type (a) or (c). It might 
  or might not be in good position if it's of type (b).
\end{prop}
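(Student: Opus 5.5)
Type (a) is the closed disk position, so good position follows directly from Lemma~\ref{L:ballgood}; equivalently, in Kadziela position type (a) is strong Kadziela position and Corollary~\ref{P:Kad} applies. For the last sentence it suffices to exhibit two explicit configurations satisfying \eqref{caseb} but not strong Kadziela position. One should be in good position, found by a coordinate change that puts it in closed disk position. The other should violate good position, for instance by finding a product of the $s_i$ with finite order greater than $1$ (an elliptic element), which contradicts the free product structure. Such examples come from taking $|a_1-b_1|=|a_1|$ versus $|a_1-b_1|<|a_1|$ with extra coincidences, and are then checked by direct computation.

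The main work is type (c). My plan is to reduce it to Lemma~\ref{L:ballgood} by an explicit change of coordinates together with a change of generators. Since good position only depends on the group $\Gamma(\T)$ being a discontinuous free product of order-$2$ groups, I may replace the generators $s_0,s_1,s_2$ by another free system of involutions generating the same group, such as $s_0, s_1, s_0s_2s_0$ or $s_1s_0s_1$. Conjugating an involution by another changes its fixed pair $\{a,b\}$ to its image, and the product decomposition is preserved. So it suffices to find some $\tau$ in the group (a word in the $s_i$) such that the new set of fixed points, for instance $\{0,b_0, s_0(a_1),s_0(b_1),1,\infty\}$ or a similar modification, is in closed disk position after a Möbius transformation.

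Concretely, $s_0(z)=b_0z/(2z-b_0)$. For $|z|>|b_0|$ this gives $|s_0(z)|=|b_0|\cdot|z|/|2z|$, so $s_0$ maps points of absolute value $|a_1|$ and $|b_1|$ to points of absolute value $|b_0|/|2|$-scaled, lying in $\calB^+_0$. I would instead look for the right conjugation using the inequality \eqref{casec}. Since $|b_1|<|a_1|$, the pair $a_1,b_1$ straddles the annulus that contains $\calB_0$'s neighbourhood, and conjugating by $s_1$ should move $\{0,b_0\}$ into a small disk near $b_1$ (the fixed point of $s_1$ has a repelling/contracting behaviour there). I would compute $|s_1(0)-b_1|$ and $|s_1(b_0)-b_1|$ using $s_1(z)-b_1=(a_1-b_1)(z-b_1)/(2z-(a_1+b_1))$ and the ultrametric inequalities from \eqref{casec}. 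The goal is to show that the new configuration $\{s_1(0),s_1(b_0),a_1,b_1,1,\infty\}$ has disjoint closed disks and lies in Kadziela position with $d_{ij}<1$, that is, strong Kadziela. Then Corollary~\ref{P:Kad} gives that $\langle s_1s_0s_1,s_1,s_2\rangle=\Gamma(\T)$ is a discontinuous free product.

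The main obstacle is choosing the right word and verifying the absolute-value estimates. The strict inequality in \eqref{casec} is exactly what should make the transformed disks strictly nested or disjoint, while equality as in \eqref{caseb} allows degenerate coincidences. If conjugation by $s_1$ alone does not work, I would try a Möbius normalization first (sending $b_1\mapsto 0$) and redo the reduction-tree check using Remark~\ref{R:good}(2): one verifies that each end line contains exactly one transformed pair.
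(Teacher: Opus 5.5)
Your treatment of type~(a) is fine, and so is the observation that good position depends only on the group, so that $s_0$ may be replaced by $s_1s_0s_1$. The gap is in type~(c): the configuration you aim for does not exist. With $|b_0|<|b_1|<|a_1|$, your formula for $s_1(z)-b_1$ gives $|s_1(0)|=|s_1(b_0)|=|s_1(0)-b_1|=|s_1(b_0)-b_1|=|b_1|$ and $|s_1(0)-s_1(b_0)|=|b_0|$. So $s_1$ does not push $\{0,b_0\}$ into a small disc near $b_1$; it only moves the pair to another residue class at the same scale $|b_1|$. After $z\mapsto (z-s_1(0))/(1-s_1(0))$ the new set is $\{0,b_0',a_1',b_1',1,\infty\}$ with $|b_0'|=|b_0|<|b_1'|=|b_1|<|a_1'|=|a_1|$. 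This is again of type~(c), with $d_{10}=1$; conjugating by $s_0$ instead gives a type~(b) set.

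This is not a matter of choosing a better word. In the Bruhat--Tits tree, $s_j$ fixes pointwise the geodesic $A_j$ from $a_j$ to $b_j$. In type~(c), the shortest path from $A_0$ to $A_2$ runs along $A_1$ for length $v(b_1)-v(a_1)>0$. Consequently the quotient by $W$ of the subtree spanned by the limit set (the dual graph of the reduction of $\Omega/W$) is a dumbbell, i.e.\ reduction type~(c). By contrast, any free involutive generating triple in closed disk position would produce a theta graph, i.e.\ type~(a). Since $W=\ker(\Gamma\to\Z/2\Z)$ is intrinsic to $\Gamma$, no change of generators and no M\"obius normalization can bring a type~(c) set into the range of Lemma~\ref{L:ballgood} or Corollary~\ref{P:Kad}.

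What is needed is a ping-pong argument adapted to type~(c) itself. With $c_1=(a_1+b_1)/2$, put $X_0=\{|z|\le|b_0|\}$, $X_1=B(s_1(0),|b_1|)\cup B(c_1,|a_1|)$ and $X_2=\{|z|\ge 1\}$. These are pairwise disjoint, since they lie in $|z|\le|b_0|$, in $|z|\in\{|b_1|,|a_1|\}$, and in $|z|\ge1$ respectively. The needed mapping properties are:
\begin{itemize}
\item $|s_0(z)|=|b_0|$ for $|z|>|b_0|$;
\item $s_2(z)=2-z$;
\item $s_1$ is an isometry on $\{|z|<|a_1|\}$;
\item $s_1(z)-c_1=(a_1-b_1)^2/(4(z-c_1))$, so $s_1$ maps $\{|z|>|a_1|\}$ into $B(c_1,|a_1|)$.
\end{itemize}
Together these give $s_j(X_i)\subset X_j$ for $i\neq j$. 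The ping-pong lemma with three factors then yields $\Gamma(\T)=\langle s_0\rangle*\langle s_1\rangle*\langle s_2\rangle$. Discontinuity still needs its own argument, for instance via the fundamental set cut out by these discs, or via the restricted position criterion of Remark~\ref{R:good}.

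For type~(b), your suggested dichotomy is off: $|a_1|=|b_1|$ with $|a_1-b_1|<|a_1|$ is strong Kadziela position, i.e.\ type~(a). The relevant feature is whether $|a_1+b_1|<|a_1|$, in which case $s_1$ swaps the directions of $0$ and $\infty$ at scale $|a_1|$. For example, $\T=\{0,p^2,p,-p,1,\infty\}$ has $s_1(z)=p^2/z$ and $s_1s_0s_1=s_2$, so it is not in good position. A non-swapping example such as $\{0,p^2,p,2p,1,\infty\}$ with $p\ge5$ is good by the same kind of ping-pong. It cannot be shown good by moving it to closed disk position, which is impossible for the same invariance reason as above.
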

\section{Schottky groups, theta functions and Mumford
curves}\label{S:Schottky} 
 
Mumford showed that every split degenerate curve of genus $g\ge 2$ over $K$ can be
uniformized by a Schottky group in $\PGL_2(K)$. We recall the theory of
Schottky groups, their theta functions and Mumford curves here. Our main reference
is~\cite{GP80}, but we   will use the notation of~\cite{MX25}.

\subsection{Schottky groups}\label{}

We will be mostly, but not exclusively, concerned with the following
special discontinuous groups.
\begin{defn}\label{D:Schottky}
  We call a subgroup $W\subset \PGL_2(K)$ a \textit{Schottky group} if $W$
  is discontinuous, finitely generated and torsion-free. 
\end{defn}
\begin{lemma}\label{L:DiscSchott}
  Let $\Gamma\subset \PGL_2(K)$ be a finitely generated discontinuous
  group. Then
  \begin{enumerate}
\item $\Gamma$ has a finite index normal Schottky subgroup $W$, and we have
  $\Omega_W = \Omega_\Gamma$;
    \item $\Gamma$ is Schottky itself if and only if it is
  discrete and free. 
  \end{enumerate}
\end{lemma}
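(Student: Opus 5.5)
The argument is a standard reduction to finite subgroups of $\PGL_2(K)$ together with Selberg's lemma; the paper would presumably just cite \cite[Chapter~I]{GP80}, and the plan below records how that proof would go.

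\textbf{Part (1).} First I would show that $\Gamma$ is virtually torsion-free. Since $\Gamma$ is a finitely generated subgroup of $\GL_2(K)$ modulo scalars, I pass to the preimage in $\GL_2(K)$. The entries of the finitely many generators lie in a finitely generated $\Z$-subalgebra $A\subset K$, so $\Gamma$ embeds into $\PGL_2(A)$. Selberg's lemma (or, $p$-adically, reduction modulo a suitable ideal and the pro-$p$ property of the first congruence subgroup) then gives a finite-index normal subgroup $W$ that is torsion-free. Concretely, I would take $W$ to be the kernel of $\Gamma\to \PGL_2(A/\mathfrak a)$ for a suitable ideal $\mathfrak a$.

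This $W$ is finitely generated, being of finite index in a finitely generated group. It is discontinuous because $\Omega_W\supseteq\Omega_\Gamma\neq\varnothing$: a limit point of $W$ is, by definition, a limit point of $\Gamma$. So $W$ is Schottky.

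For the equality $\Omega_W=\Omega_\Gamma$, I need the converse inclusion of limit sets. Suppose $z=\lim\gamma_n(x)$ with the $\gamma_n\in\Gamma$ distinct. Write $\gamma_n=w_n\delta_n$ with $\delta_n$ ranging over a finite set of coset representatives. Passing to a subsequence, $\delta_n=\delta$ is constant, and the $w_n$ are distinct. Then $z=\lim w_n(\delta x)$, so $z$ is a limit point of $W$.

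\textbf{Part (2).} If $\Gamma$ is Schottky, it is torsion-free by definition. The structure theorem for Schottky groups \cite[Chapter~I, Theorem~1.3 / \S4]{GP80} says that a torsion-free finitely generated discontinuous group is free. The proof uses the action on the Bruhat--Tits tree: such a group acts freely on the subtree spanned by its limit set, and Serre's theorem then gives freeness. Discreteness follows from discontinuity, since an ordinary point has a neighbourhood meeting only finitely many translates.

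Conversely, suppose $\Gamma$ is discrete and free. Then it is torsion-free. The remaining point is that a discrete free group with no elliptic elements acts with ordinary points. Every nontrivial element is hyperbolic, because discreteness rules out non-torsion elliptic elements of $\PGL_2(K)$ (these would generate a subgroup with nondiscrete closure in a compact group), and the unipotent case is excluded because $K$ has characteristic $0$. Ihara's theorem / \cite[Chapter~I, Theorem~3.1]{GP80} then says that a finitely generated discrete subgroup of $\PGL_2(K)$ all of whose nontrivial elements are hyperbolic is discontinuous.

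\textbf{Main obstacle.} The key difficulty is this last implication, discrete plus free implies discontinuous. I would handle it by citing the tree-theoretic argument: $\Gamma$ acts freely on the Bruhat--Tits tree, and the limit set is the set of ends of the minimal invariant subtree, which is a proper compact subset of $\BP^1(K)$ because $K$ is locally compact.
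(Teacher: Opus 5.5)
Your argument is correct. It is also more explicit than the paper's proof, which is only a pointer: part (1) is \cite[Chapter~I, Theorem~3.1~(1)]{GP80}, and part (2) is deduced from \cite[Chapter~I, (1.1.6) and Theorem~3.1~(2)]{GP80}.

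Your route for (1) is complete:
\begin{enumerate}
\item a torsion-free normal subgroup of finite index from Selberg's lemma;
\item finite generation by Schreier;
\item discontinuity of $W$ from $\Omega_\Gamma\subseteq\Omega_W$;
\item the coset/subsequence argument for $\Omega_W\subseteq\Omega_\Gamma$.
\end{enumerate}
Two small cautions apply there. First, torsion in $\PGL_2$ can come from elements of $\GL_2$ of infinite order whose powers are non-torsion scalars. The clean way to invoke Selberg is therefore through a faithful linear representation of $\PGL_2$, e.g.\ the adjoint embedding $\PGL_2(K)\hookrightarrow\GL_3(K)$. Your $\PGL_2$-congruence kernel lands in the corresponding $\GL_3$-congruence kernel, so your concrete $W$ is fine. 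Second, your parenthetical $p$-adic alternative only works if the reduction is done in the finitely generated ring $A$ (or after embedding $A$ in some $\Z_\ell$), not in $\O$. An infinite discrete group is never contained in a compact subgroup such as $\PGL_2(\O)$.

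The step you single out as the main obstacle is not needed. In the lemma, $\Gamma$ is assumed finitely generated and discontinuous from the outset. So the converse in (2) is immediate: free implies torsion-free, and then $\Gamma$ is Schottky by Definition~\ref{D:Schottky}.

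Your detour is also loosely justified, in two ways. Ihara's theorem is about freeness of torsion-free discrete groups, not about discontinuity. And the limit set of a discrete group need not be a proper subset of $\BP^1(K)$: for a torsion-free cocompact lattice in $\PGL_2(K)$ it is all of $\BP^1(K)$, yet such a lattice is a Schottky group in the paper's sense. What is true, and what your argument would actually need, is that for locally compact $K$ every limit point of a discrete subgroup lies in $\BP^1(K)$ (e.g.\ via the Cartan decomposition). Hence $\BP^1(\C_p)\setminus\BP^1(K)\subseteq\Omega_\Gamma$.
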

\begin{proof}
  The first statement is~\cite[Chapter~1,~Theorem~3.1~(1)]{GP80}. The second
  follows from~\cite[Chapter~I,~(1.1.6) and Theorem~3.1~(2)]{GP80}.
\end{proof}

\begin{defn}\label{D:GoodFundDom}
	Let $W\subset \PGL_2(K)$ be a Schottky group of rank $g\ge 2$. A
  \textit{good fundamental domain} for $W$ is a set $\mathcal{F} \subset
  \mathbb{P}^1(\C_p)$ with the following properties:
      \begin{enumerate}
        \item\label{Good1} $\mathcal{F}$ is the complement of open balls $B_i$, where
          $i\in \{\pm1,\ldots,\pm g\}$,
          with centers in $K$ and radii in $|\overline{K}^\ast|$;
    \item\label{Good2} the corresponding closed balls $B_i^+$ are mutually disjoint, and
    \item\label{Good3} $W$ has generators $w_1,\dots,w_g$ that satisfy
      \[w_i(\mathbb{P}^1_K\setminus B_{-i}) = B_{i}^+ \ \ \ \text{and} \ \
          \  w_i(\mathbb{P}^1_K\setminus B_{-i}^+) = B_{i},\ \ \ i = 1,\dots,g.\]
          In this case, we call $w_1,\ldots,w_g$~\textit{good
          generators}.
      \end{enumerate}
\end{defn}

We now collect the facts that we need about good fundamental domains. For
proofs, see \cite[Chapter~I, (4.1.3) \& (4.1.4)]{GP80}.
\begin{prop}\hfill 
  \begin{enumerate}
    \item 
  Let $W\subset \PGL_2(K)$ be a Schottky group. Then there is a good fundamental domain for $W$, and 
      for every good fundamental domain $\mathcal{F}$ for $W$, we have $\bigcup_{w\in
      W}w \mathcal{F} = \Omega_W$. In particular, $\mathcal{F}\subset \Omega_W$.
    \item Conversely, if $\mathcal{F}\subset \BP^1_K$ satisfies properties~\eqref{Good1}
      and~\eqref{Good2} of Definition~\ref{D:GoodFundDom}, then there are
      $w_1,\ldots,w_{g}\in\PGL_2(K)$ such that
      $\langle w_1,\ldots,w_{g}\rangle$ is Schottky and
      satisfies~\eqref{Good3}.
  \end{enumerate}
	\end{prop}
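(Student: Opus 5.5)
The proposition has two parts, and I would prove them by different means. Part (1) needs an existence argument plus a covering argument. Part (2) needs an explicit construction of generators followed by a ping-pong argument.

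For part (1), existence of a good fundamental domain, I would work with the Bruhat--Tits tree $\mathcal{T}$ of $\PGL_2(K)$. Since $W$ is discontinuous and torsion-free, it acts freely on the subtree $\mathcal{T}_W$ spanned by the ends corresponding to limit points. Since $W$ is finitely generated, the quotient $\mathcal{T}_W/W$ is a finite graph with first Betti number $g$. I would choose a spanning tree of this graph and lift it to a finite subtree $D\subset\mathcal{T}_W$. The $g$ remaining edges then pair up $2g$ boundary edges $e_{\pm i}$ of $D$ via elements $w_i\in W$, with $w_i(e_{-i})=\overline{e_i}$. Each boundary edge $e_{j}$ determines an open ball $B_j$, namely the set of ends lying beyond it. Two points need care here. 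First, I would subdivide edges, passing to a ramified extension so that midpoints are allowed; this makes the balls $B_j^+$ pairwise disjoint, and it is why radii lie in $|\overline{K}^\ast|$ rather than $|K^\ast|$. Second, the centers can be taken in $K$, because the vertices are $K$-rational up to that subdivision. The identities $w_i(\BP^1\setminus B_{-i})=B_i^+$ and $w_i(\BP^1\setminus B^+_{-i})=B_i$ are then a translation of $w_i$ mapping one edge onto the reversed other edge. The standard tree-action argument shows that $w_1,\dots,w_g$ generate $W$.

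The covering statement $\bigcup_w w\mathcal{F}=\Omega_W$ I would prove by a descent argument. Take $z\in\Omega_W$ with $z\notin\mathcal{F}$, so $z\in B_{j}$ for some $j$. Applying the appropriate $w_j^{\pm1}$ moves $z$ into a strictly larger region; measured in the tree, the geodesic from $z$ to $D$ gets shorter. If this process never terminated, $z$ would be a limit of nested balls $w_{j_1}\cdots w_{j_n}(B_{j_{n+1}})$, making $z$ a limit point, which contradicts $z\in\Omega_W$. For the reverse inclusion, $\mathcal{F}\subset\Omega_W$, I would note that a neighbourhood of $\mathcal{F}$ meets only finitely many translates $w\mathcal{F}$, by the nested-ball shrinking, so points of $\mathcal{F}$ cannot be limits of orbits.

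For part (2), given disjoint closed balls $B_{\pm i}^+$, I would construct each $w_i$ explicitly. Compose an affine map taking $B_{-i}$ to the unit ball at $0$, then the inversion $z\mapsto 1/z$, which carries $\BP^1\setminus B(0,1)$ onto $B^+(0,1)$, then an affine map taking $B^+(0,1)$ onto $B_i^+$. This composite satisfies the two required identities. The Schottky property then follows from a ping-pong lemma. For a reduced word $w$ in the $w_i^{\pm1}$, the image $w(\mathcal{F})$ lies in a ball determined by the first letter, and the nesting is strict, so $w\neq 1$; hence the group is free on $w_1,\dots,w_g$. The same nesting also shows the group is discrete, and by Lemma~\ref{L:DiscSchott} it is therefore Schottky.

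I expect the main obstacle to be the existence half of part (1): choosing the fundamental subtree so that the induced balls have pairwise disjoint closures and centers in $K$. I would handle this with the subdivision trick and by choosing $D$ to be a minimal lift of a spanning tree.
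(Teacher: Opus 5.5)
\textbf{Comparison with the paper.} The paper gives no argument of its own; it only cites \cite[Chapter~I, (4.1.3) \& (4.1.4)]{GP80}. Your self-contained route uses the standard ingredients for part~(1): the tree $\mathcal{T}_W$ with cut points at edge midpoints, the edge pairing, and ping-pong.

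\textbf{The gap in part~(2).} Write $B_i=B(c_i,\rho_i)$, with balls in $\BP^1(\C_p)$ as in Definition~\ref{D:GoodFundDom}. An affine map with coefficients in $K$ rescales radii only by elements of $|K^\ast|$. So ``an affine map taking $B_{-i}$ to the unit ball'' does not exist over $K$ when $\rho_{-i}\notin|K^\ast|$. The statement allows radii in $|\overline{K}^\ast|$, and so does your own midpoint construction.

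No other construction can fix this. Suppose $w\in\PGL_2(K)$ satisfies $w(\BP^1\setminus B_{-i})=B_i^+$. Then $w(\infty)\in B_i^+$ is finite and the pole $z_0$ of $w$ lies in $B_{-i}$. So $w(z)=\alpha+\beta/(z-z_0)$ with $\alpha,z_0\in K$ and $\beta\in K^\ast$. This $w$ maps $\BP^1\setminus B(c_{-i},\rho_{-i})$ onto $B^+(\alpha,|\beta|/\rho_{-i})$. Hence $\rho_i\rho_{-i}=|\beta|\in|K^\ast|$ is \emph{necessary}.

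For example, take $\Q_p$ with $p\ge 5$ and the four balls of radius $|p|^{1/3}$ centred at $0,1,2,3$. They satisfy \eqref{Good1} and \eqref{Good2}, but for no pairing of them do suitable $w_1,w_2\in\PGL_2(\Q_p)$ exist. So part~(2) is false as literally stated. You need the extra hypothesis $\rho_i\rho_{-i}\in|K^\ast|$ for all $i$. It holds automatically when the radii lie in $|K^\ast|$, for your midpoint balls, and for any $\mathcal{F}$ that is a good fundamental domain of some group. Under this hypothesis, take $w_i(z)=c_i+\mu_i/(z-c_{-i})$ with $\mu_i\in K$ and $|\mu_i|=\rho_i\rho_{-i}$. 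This is your composite, with the \emph{product} of the two scaling factors chosen in $K$, and your ping-pong then goes through.

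\textbf{Smaller points.}

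(i) Part~(1) asserts $\bigcup_w w\mathcal{F}=\Omega_W$ for \emph{every} good fundamental domain. But ``the geodesic from $z$ to $D$ gets shorter'' only makes sense for the $\mathcal{F}$ you built from $\mathcal{T}_W$. In general you must show that the nested balls $N_n=w_{j_1}\cdots w_{j_n}(B_{j_{n+1}})$, with $w_{-j}=w_j^{-1}$, shrink to a point. They do. Since $w_j^{-1}$ sends the boundary point of $B_j$ to that of $B_{-j}$, invariance of the hyperbolic distance gives, for the radii,
$r(N_{n+1})/r(N_n)=\rho_{-j}\rho_k/|c_k-c_{-j}|^2$ with $j=j_{n+1}$ and $k=j_{n+2}\ne -j$. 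This ratio is at most some $\kappa<1$ because the closed balls are disjoint.

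(ii) Lemma~\ref{L:DiscSchott} presupposes that the group is discontinuous. So ``free and discrete'' does not let you invoke it until you have an ordinary point. Get one from your reverse-inclusion argument, which rests on ping-pong disjointness rather than on shrinking. The set $\mathcal{F}$ is open and contains $\infty$. It meets $w\mathcal{F}$ only for $w\in\{1,w_1^{\pm1},\dots,w_g^{\pm1}\}$, since $w\mathcal{F}\subset B_{j_1}$ for every reduced word $w=w_{j_1}\cdots w_{j_n}$ with $n\ge 2$. Hence $\mathcal{F}\subset\Omega$.

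(iii) In the existence part, choose the lift of the spanning tree to contain the vertex where the ray towards $\infty$ leaves $\mathcal{T}_W$. Otherwise one of your $B_j$ contains $\infty$ and is not a ball with centre in $K$ in the paper's sense.
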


Thanks to this result, for a given point $P \in \Omega_W$, there is
a point $Q\in\mathcal{F}$ such that $P$ and $Q$ are equivalent modulo the action
of $W$; in other words, ordinary points can be moved to the fundamental domain. This has been made algorithmic by Morrison and Ren; see~\cite[Subroutine~2.6]{MR15}. They also give an algorithm to compute
a good fundamental domain and generators given any set of generators of $W$ (see~\cite[Algorithm~4.8]{MR15}), and we may and will therefore assume that whenever we deal with a Schottky group, we have good generators available.
 
\begin{rk}\hfill
  \begin{enumerate}
    \item Not every set of generators of a Schottky group is good for some fundamental domain.
    \item If a set of generators of a Schottky group is good, then it might be good for more than one fundamental domain, see~\cite[\S5.2]{MX25}.
  \end{enumerate}
\end{rk}

\subsection{Theta functions}\label{S:Theta}
Gerritzen and van der Put used non-archimedean theta functions on Schottky
groups to make 
Mumford's theory of $p$-adic uniformization of split degenerate curves
explicit. For a  Schottky group $W\subset \PGL_2(K)$ they define, 
in~\cite[Chapter~II]{GP80}, the  theta function $\theta_W$ by
  $$\theta_{W}(a,b;z) \colonequals \prod_{w\in W}\frac{z-w a }{z-w b}$$
  for $a,b,z\in \Omega_W$. 
  However, when using this definition, one needs to deal with various special cases and
  several statements and proofs below become fairly cumbersome. We
  instead use a less coordinate-dependent  definition from~\cite[\S3]{MX25}. 
  First we recall, for distinct $z,y,a,b\in K$, the \textit{cross ratio}
  \begin{equation}\label{CrossRatio}
  (z,y;a,b)  \colonequals \frac{z-a}{z-b}\cdot\frac{y-b}{y-a}\,.
  \end{equation}
  We extend this definition to $\BP^1(K)$ by defining
  $$
  (z,y;z,b) \colonequals (z,y;a,y) \colonequals 0,\;\;
  (z,z;a,b) \colonequals (z,y;a,a) \colonequals 1,\;\;
  (z,y;a,z) \colonequals (z,y;y,b) \colonequals \infty
  $$
and requiring  for all $\gamma \in \PGL_2(K)$:
$$ (\gamma z, \gamma y; \gamma a, \gamma b)=(z,y;a,b)$$
We further extend the cross ratio to a pairing $(D,E)$ for $D,E \in Z^0(\BP^1(K))$;
  the group of zero cycles on $\BP^1(K)$ of degree~0. 
\begin{lemma}{\cite[Proposition~3.1]{MX25}}\label{L:thetaprops}
  The pairing $(\cdot,\cdot)$ is
  bilinear, symmetric and we have 
   $(\gamma D,\gamma E) = (D,E)$ for all $\gamma\in \PGL_2(K)$ and $D,E \in
   Z^0(\BP^1(K))$.
\end{lemma}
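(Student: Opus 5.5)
The plan is to first pin down how the pairing is defined on cycles, and then reduce each property to a statement about the four-point cross ratio. For $D=\sum_i (x_i)-(y_i)$ and $E=\sum_j (a_j)-(b_j)$ we set
\[(D,E)\colonequals\prod_{i,j}(x_i,y_i;a_j,b_j).\]
The first step is to check that this is well defined, i.e.\ independent of how $D$ and $E$ are written as sums of differences of points. For this I will use two multiplicativity relations of the four-point cross ratio, valid for distinct generic points in $K$ and verified directly from~\eqref{CrossRatio}:
\begin{align*}
(z,y;a,b)\,(y,w;a,b)&=(z,w;a,b),\\
(z,y;a,b)\,(z,y;b,c)&=(z,y;a,c).
\end{align*}
Both are immediate by telescoping the factors $\frac{z-a}{z-b}$. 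Together with $(z,z;a,b)=1=(z,y;a,a)$, they show that the map $\big((x)-(y),(a)-(b)\big)\mapsto (x,y;a,b)$ extends to a biadditive map $Z^0\times Z^0\to K^\times\cup\{0,\infty\}$. The reason is that $Z^0(\BP^1(K))$ is generated by the differences $(x)-(y)$, and the only relations among these generators are generated by $(x)-(y)+(y)-(w)=(x)-(w)$ and $(x)-(x)=0$. Since the pairing is defined to respect these relations, bilinearity holds by construction.

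Symmetry follows from the identity
\[
(z,y;a,b)=\frac{(z-a)(y-b)}{(z-b)(y-a)}=(a,b;z,y),
\]
which is visible by rearranging the four linear factors. By bilinearity this extends to all pairs of cycles.

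For $\PGL_2(K)$-invariance, it suffices by bilinearity to treat generators. Since $\PGL_2(K)$ is generated by translations, scalings and $z\mapsto 1/z$, it is enough to check invariance under each of these. Translations and scalings are trivial. For the inversion $z\mapsto 1/z$ we have $1/z-1/a=(a-z)/(za)$; the factors $za$ and the others cancel in the ratio, which gives invariance. This is exactly the invariance the paper requires in the definition, so on generic points it is just the classical fact.

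The main obstacle is the degenerate configurations in which points coincide or equal $\infty$, where the value $0$ or $\infty$ appears. Here I would define the pairing only where $D$ and $E$ have disjoint supports, or else interpret the extended conventions through the identities above. The cleanest route is to note that all the identities are identities of rational functions. The conventional values listed in the paper are precisely the limits of~\eqref{CrossRatio} (interpreted in $\BP^1$), and the invariance requirement fixes the values at $\infty$ compatibly. The identities therefore persist by continuity and density, as long as no product of the form $0\cdot\infty$ is formed; that case is excluded when the supports are disjoint.
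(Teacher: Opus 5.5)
The paper gives no proof of its own here; it cites \cite[Proposition~3.1]{MX25}, so your argument has to stand on its own. Its core is sound. For any set $S$, the group $Z^0(S)$ is presented by the differences $(x)-(y)$, $x,y\in S$, subject to $(x)-(y)+(y)-(w)=(x)-(w)$. Your two telescoping identities are exactly the relations needed in each variable. The identity $(z,y;a,b)=(a,b;z,y)$ gives symmetry, and four-point invariance (classical for finite points, definitional at $\infty$) gives $(\gamma D,\gamma E)=(D,E)$. All of this is valid \emph{provided every factor that occurs lies in the group $K^\times$}.

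The gap is in your treatment of the values $0$ and $\infty$.

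First, the claim that the generator map extends to a biadditive map $Z^0\times Z^0\to K^\times\cup\{0,\infty\}$ is false. Take distinct $x,y,a,b$ and $E=(a)-(b)$. Then $((x)-(a),E)=(x,a;a,b)=\infty$ and $((a)-(y),E)=(a,y;a,b)=0$, whereas $((x)-(y),E)=(x,y;a,b)$ is a unit that depends on $x,y$. So no bilinear extension to pairs with overlapping supports exists, and the lemma can only be meant for disjoint supports. Nor are the paper's conventions limits of the formula in general: $(z,z;z,b)$ is $0$ by one rule and $1$ by another, and the formula has no limit there. Your ``continuity and density'' step therefore proves nothing in such configurations.

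Second, your closing claim that $0\cdot\infty$ cannot arise once the supports are disjoint is wrong as stated. In the example above, $D=(x)-(y)$ is disjoint from $E$, yet the decomposition $D=[(x)-(a)]+[(a)-(y)]$ produces $\infty\cdot 0$. Your well-definedness argument ranges over all decompositions, so it breaks exactly here.

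The repair is as follows.
\begin{enumerate}
\item Fix $E=\sum_j(a_j)-(b_j)$ with $a_j,b_j\in\operatorname{supp}E$, and run the presentation argument in $Z^0(S)$ with $S=\BP^1(K)\setminus\operatorname{supp}E$, where every factor is a unit.
\item Do the same in the other variable, and change decompositions one variable at a time.
\item Handle points at $\infty$ by a single $\gamma\in\PGL_2(K)$ moving the finitely many points involved into $K$, rather than by continuity.
\end{enumerate}

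A more economical route is to set $(D,E)=f_D(E)$ with $\div f_D=D$, as in Remark~\ref{R:WeilRec}. Well-definedness and bilinearity on disjoint supports are then immediate. Symmetry reduces on generators to your identity $(z,y;a,b)=(a,b;z,y)$, which is Weil reciprocity on $\BP^1$. Invariance follows from $f_{\gamma D}=c\,f_D\circ\gamma^{-1}$.
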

We now use this pairing to define the theta function with respect to a
Schottky group $W$ of rank~$g\ge 2$. We fix good generators $w_1,\ldots,w_g$ and
we denote for a positive integer $n$ by $W_n$ the set of reduced words of
length at most $n$ in $w_1,\ldots,w_g$.
\begin{defn}\label{D:thetaW}
  The \textit{theta function (or theta pairing) with respect to $W$} is the pairing on
  $Z^0(\Omega_W(K))$ defined by
  \begin{equation}\label{DEW}
    (D,E)_W\colonequals \lim_{n\to \infty}\prod_{w\in W_n}(D,w E)\,.
  \end{equation}
\end{defn}
The limit exists by~\cite[IV.1]{GP80}.
As a special case, we recover the original definition of Gerritzen and van
der Put, namely
$$
\theta_W(a,b;z) = (a-b,z-\infty)_W\,.
$$
We collect a few useful properties of the theta function
from~\cite[\S3]{MX25}.
\begin{lemma}
  Let $Z^0(\Omega_W(K))_W \colonequals H_0(W,Z^0(\Omega_W(K)))$ be the
  coinvariant zero-cycles on $\Omega_W(K)$.  Then
  \begin{enumerate}
    \item $\iota(w)\colonequals z_0-w z_0$ defines a homomorphism $\iota\colon W\to
      Z^0(\Omega_W(K))_W$ that does not depend on $z_0$;
    \item  $(\cdot,\cdot)_W$ extends to a pairing
      on $Z^0(\Omega_W(K))_W$ that satisfies the properties of
      Lemma~\ref{L:thetaprops}. 
  \end{enumerate}
  \end{lemma}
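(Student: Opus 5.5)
To prove the lemma I would work directly with the coinvariants $Z^0(\Omega_W(K))_W = Z^0/I$, where $I$ is the subgroup generated by all $D - wD$ for $w\in W$ and $D\in Z^0(\Omega_W(K))$. The plan is to prove (1) by elementary manipulations in this quotient and then obtain (2) from the $W$-invariance of the theta pairing together with Lemma~\ref{L:thetaprops}.

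For (1), I first check independence of $z_0$. For $z_0,z_1\in\Omega_W(K)$ we have
\[(z_0 - wz_0) - (z_1 - wz_1) = (z_0 - z_1) - w(z_0 - z_1),\]
and the right-hand side lies in $I$. Next comes the homomorphism property. For $w,w'\in W$,
\[\iota(ww') = z_0 - ww'z_0 = (z_0 - wz_0) + (wz_0 - ww'z_0).\]
The second summand is $w(z_0 - w'z_0)$, which is congruent to $z_0 - w'z_0 = \iota(w')$ modulo $I$. Hence $\iota(ww') = \iota(w) + \iota(w')$. This part is routine.

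For (2), the crux is to show that the infinite product \eqref{DEW} is invariant under $D\mapsto wD$, and similarly in $E$.
\begin{itemize}
\item \emph{Invariance in $E$.} Formally, $\prod_{u\in W}(D,uw E)$ is a reindexing of $\prod_{u\in W}(D,uE)$. The convergence is only conditional: the limit is over the $W_n$, and $W_n w\neq W_n$. So I would use the convergence estimates from \cite[IV.1]{GP80}, namely $(D,uE)\to 1$ uniformly as $|u|\to\infty$ with $D,E$ fixed. Then the symmetric difference $W_nw\,\triangle\, W_n$ consists of words of length $\ge n-|w|$, so the ratio of the partial products tends to $1$. This gives $(D,wE)_W=(D,E)_W$.
\item \emph{Invariance in $D$.} By Lemma~\ref{L:thetaprops}, $(wD,uE) = (D,w^{-1}uE)$. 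This reduces to the previous case with $w^{-1}$ acting on the left, handled by the same word-length argument.
\end{itemize}

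With invariance in hand, $(\cdot,\cdot)_W$ vanishes multiplicatively on $I$ in each variable, since $(D-wD,E)_W=(D,E)_W/(wD,E)_W=1$. Bilinearity on $Z^0$ holds termwise: each factor is bilinear by Lemma~\ref{L:thetaprops}, and limits of products are multiplicative. Therefore the pairing descends to the coinvariants.

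Symmetry follows from termwise symmetry plus the substitution $(D,uE)=(u^{-1}D,E)$ and the reindexing $u\mapsto u^{-1}$. The latter preserves each $W_n$, since inversion preserves reduced word length. Finally, invariance under $\gamma\in\PGL_2(K)$, in the appropriate sense of conjugating $W$, follows termwise from Lemma~\ref{L:thetaprops}.

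The main obstacle is the uniform tail estimate $\prod_{u\in W_{n}\setminus W_{n-k}}(D,uE)\to 1$. I would obtain it from the good fundamental domain: for a reduced word $u$, the cycle $uE$ lies in a ball $B_i$ that shrinks geometrically with $|u|$, so $(D,uE)$ is within a geometrically small distance of $1$.
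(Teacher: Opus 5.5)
Your argument is correct. The paper gives no proof of its own here and imports the lemma from \cite[\S3]{MX25}; your route is the standard one, namely coinvariant bookkeeping for (1), and for (2) $W$-invariance of the theta product by reindexing together with termwise bilinearity and symmetry.

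Two small remarks. First, since the factors $(D,uE)$ tend to $1$ in a non-archimedean field, the product actually converges unconditionally, contrary to your ``only conditional'' comment; your word-length tail estimate is exactly the proof of this. Second, quotient manipulations such as $(D-wD,E)_W=(D,E)_W/(wD,E)_W$ tacitly require the support of $D$ to avoid the $W$-orbit of the support of $E$, so that all values lie in $K^\times$.
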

  
  \begin{cor}\label{C:}
   For all $D,E \in Z^0(\Omega_W(K))$, we have $(\iota D, \iota E)_W = (D,E)_W$.
  \end{cor}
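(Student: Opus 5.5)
The plan is to read $\iota$ in the statement as the natural projection $Z^0(\Omega_W(K))\to Z^0(\Omega_W(K))_W$ onto the coinvariants. With that reading, the corollary says that the extended pairing of the preceding lemma, evaluated on the classes of $D$ and $E$, equals the original limit~\eqref{DEW}. So the whole content is that the extension is \emph{defined by} choosing representatives, and this is legitimate because $(\cdot,\cdot)_W$ on $Z^0(\Omega_W(K))$ is constant on $W$-coinvariant classes in each variable. I would therefore prove, for all $u\in W$,
\[
(uD,E)_W=(D,E)_W \qquad\text{and}\qquad (D,uE)_W=(D,E)_W .
\]
By bilinearity (Lemma~\ref{L:thetaprops} passes to the limit), these two identities mean the pairing kills $D-uD$ and $E-uE$. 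Hence it descends to the coinvariants, and the corollary follows by evaluating on representatives.

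\textbf{Second variable.} Using~\eqref{DEW},
\[
(D,uE)_W=\lim_{n\to\infty}\prod_{w\in W_n}(D,wuE),
\]
and this is a product over the right translate $W_nu$. The sets $W_nu$ and $W_n$ differ only by words of length between $n-|u|$ and $n+|u|$.

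\textbf{First variable.} By the $\PGL_2(K)$-invariance in Lemma~\ref{L:thetaprops},
\[
(uD,wE)=(D,u^{-1}wE),
\]
so $(uD,E)_W$ is the limit of the product over the left translate $u^{-1}W_n$. Again this differs from $W_n$ only by words of length about $n$.

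\textbf{Main step: the boundary terms.} The key point is that these differing factors have product tending to $1$. I would extract this from the convergence argument of~\cite[IV.1]{GP80} using the good fundamental domain. For $D$ supported in a compact subset of $\Omega_W$ and a word $w$ of length $m$ in the good generators, $wE$ lies in a ball $w(B_{\pm i}^+)$ whose radius decays geometrically in $m$. Hence $(D,wE)$ is a cross ratio of nearly coincident points with $|(D,wE)-1|\le C\rho^{m}$ for some $\rho<1$. The number of words of length $m$ grows only like $(2g-1)^m$, but the non-archimedean estimate makes a product of terms each within $C\rho^m$ of $1$ itself within $C\rho^m$ of $1$, regardless of how many factors there are. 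So the discrepancy between the two truncated products tends to $1$, and the limits agree.

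An alternative, if one prefers to avoid reproving the estimate, is to observe that~\cite[IV.1]{GP80} shows the product converges unconditionally. Reindexing by $w\mapsto wu$ or $w\mapsto u^{-1}w$ is then harmless.

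Making this uniform convergence precise is the only nonformal step; everything else is invariance and bilinearity.
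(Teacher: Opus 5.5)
Your proof is correct and follows the paper's logic. The paper gives no separate argument: it treats the corollary as immediate from part (2) of the preceding lemma (taken from Masdeu--Xarles), namely that $(\cdot,\cdot)_W$ is constant on $W$-coinvariant classes in each variable, and your reading of $\iota$ as the projection to coinvariants is the intended one. The only difference is that you re-derive that descent yourself, by reindexing over $W_nu$ and $u^{-1}W_n$ and bounding the boundary factors with the ultrametric estimate. This is a valid self-contained justification, with one minor correction: for $E$ supported in $\mathcal{F}$, the point $wE$ lies in $w'(B^+_{\pm i})$, where $w'$ is $w$ with its last letter removed, not in $w(B^+_{\pm i})$.
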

  
\begin{lemma}\label{L:thetaconj}
  If $\gamma\in\PGL_2(K)$ is in the normalizer of $W$, then $(\gamma D,\gamma E)_W = (D,E)_W$ for all $D,E \in Z^0(\Omega_W(K))$.
\end{lemma}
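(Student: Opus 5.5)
The plan is to compute directly from the definition of $(D,E)_W$ as a limit of finite products and to use the $\PGL_2(K)$-invariance of the cross-ratio pairing (Lemma~\ref{L:thetaprops}). First, since $\gamma$ normalizes $W$, it preserves the limit set of $W$. Indeed, if $\gamma_n(x)\to z$ with distinct $\gamma_n\in W$, then $(\gamma\gamma_n\gamma^{-1})(\gamma x)\to \gamma z$, and the $\gamma\gamma_n\gamma^{-1}$ are distinct elements of $W$. Hence $\gamma\Omega_W=\Omega_W$. Since $\gamma\in\PGL_2(K)$, it also maps $K$-points to $K$-points, so $\gamma D,\gamma E\in Z^0(\Omega_W(K))$ and the left-hand side makes sense.

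For each $n$, Lemma~\ref{L:thetaprops} gives
\[
\prod_{w\in W_n}(\gamma D, w\gamma E)=\prod_{w\in W_n}(D,\gamma^{-1}w\gamma E)=\prod_{w'\in W_n'}(D,w'E),
\]
where $W_n'\colonequals \gamma^{-1}W_n\gamma\subset W$. The sets $W_n'$ form an increasing exhausting sequence of finite subsets of $W$. They are the balls of radius $n$ for the word length in the generators $\gamma^{-1}w_i\gamma$, which are good generators for the good fundamental domain $\gamma^{-1}\mathcal{F}$. So the claim reduces to showing that the limit in \eqref{DEW} does not depend on the choice of good generators, or more generally of the exhaustion of $W$.

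The main obstacle is this independence. I would handle it with the convergence estimates behind \cite[IV.1]{GP80}: for fixed $D,E$, the factors satisfy $|(D,wE)-1|\to 0$ as $w$ leaves finite subsets of $W$. Indeed, as $w\to\infty$ in $W$, the cycle $wE$ concentrates in ever smaller balls around limit points, and those balls stay away from the support of $D$. This makes the product unconditionally convergent in the sense that it is independent of the order of multiplication. Concretely, for every $\varepsilon>0$ there is a finite $S\subset W$ such that every factor outside $S$ is within $\varepsilon$ of $1$. Then for all $n$ large enough that $W_n$ and $W_n'$ both contain $S$, the two partial products differ by a product of factors each in $1+B(0,\varepsilon)$. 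By the ultrametric inequality that difference is a unit within $\varepsilon$ of $1$, so the two limits agree.

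If the product is written in terms of $\theta$-functions and the dependence on $D,E$ is handled by bilinearity, it suffices to check this for $D=a-b$ and $E=z-\infty'$ with $\infty'$ an ordinary point. In that case the Gerritzen--van der Put estimate that $|wa-wb|\to 0$ as $w\to\infty$ (since $a,b\in\Omega_W$) gives the required bound directly.
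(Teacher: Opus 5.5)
Your proposal is correct. The paper gives no proof of its own for this lemma (it is quoted from \cite[\S3]{MX25}), and your argument is the standard proof of it: reindex the product by $w\mapsto\gamma^{-1}w\gamma$ using the $\PGL_2(K)$-invariance of the cross-ratio pairing, and justify the change of exhaustion by unconditional convergence, which follows from $(D,wE)\to 1$ as $w\to\infty$ in $W$.

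One harmless inaccuracy is your aside that $\gamma^{-1}\mathcal{F}$ is again a good fundamental domain. That fails in the strict sense of Definition~\ref{D:GoodFundDom}: for $\gamma=w_1$, the set $\gamma^{-1}B_1=\BP^1\setminus B_{-1}^+$ contains $\infty$, so it is not an open ball. You never use this, though, because you prove independence for an arbitrary exhaustion.
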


Following~\cite[\S3.3]{MX25}, we may in fact work in a more general
setting than that of Schottky groups, and this will be convenient for our treatment of hyperelliptic
Mumford curves.
Let $\Gamma\subset \PGL_2(K)$ be a finitely generated discontinuous group. 
By Lemma~\ref{L:DiscSchott}, we can write $\Gamma = \bigcup_{i=1}^m W\gamma_i$,
where $W$ is Schottky and $\gamma_1,\ldots,\gamma_m \in \Gamma$.
\begin{defn}\label{D:thetaGamma}
  The \textit{theta function (or theta pairing) with respect to $\Gamma$} is the pairing on
  $Z^0(\Omega_\Gamma)$ defined by
  \begin{equation}\label{DEGamma}
    (D,E)_\Gamma \colonequals \prod^m_{i=1}(D,\gamma_iE)_W\,.
  \end{equation}
\end{defn}
This pairing is well-defined by~\cite[Proposition~3.6]{MX25}.

\begin{rk}\label{R:WeilRec}
  In fact we can extend the pairings~\eqref{CrossRatio} (and hence \eqref{DEW} and
  \eqref{DEGamma}) to pairs $D,E$ of divisors of degree~0 rational over $K$,
  but not necessarily pointwise rational by defining 
  \begin{equation}\label{}
    (D,E)\colonequals f(E)\,;\quad D = \div(f)\,.
  \end{equation}
  By Weil reciprocity, everything above remains valid in this more general
  setting.
\end{rk}
\subsubsection{Computing $\theta$-functions naively}\label{subsec:naive}

Definition~\ref{D:thetaW} (and~\ref{D:thetaGamma}) immediately suggests a
method to compute the theta function with respect to a (finite index
supergroup of a) Schottky group: approximate $(D,E)_W$ via
$(D,E)_{W_n}$ for $n$ large enough. We call this the~\textit{naive
approach}. It requires a study of the error term, which is discussed in
detail in~\cite[\S3.4]{MX25}, see also~\cite[Theorem~3.6]{MR15} for the
special case of the theta function $u_\gamma$ discussed below
in~\S\ref{subsec:periods}.  To get a precision of $O(p^N)$ one needs to take
an approximation with $n$ of size roughly linear in $N$. The number of elements in
$W_n$ is exponential in $n$, and hence this approach yields an exponential algorithm.

\subsubsection{The iterative approach}\label{subsec:iterative}

In~\cite{MX25}, Masdeu and Xarles introduced an iterative
approach to compute theta functions with respect to Schottky groups. This
yields a polynomial time algorithm to compute theta functions. Moreover,
the output of the algorithm is a locally analytic function that can be
either evaluated at a point, but also allows for the computation of the
derivative of the theta function. This is crucial for the algorithms that
rely on a Newton iteration, see~\S\ref{subsec:logderivs}.

More precisely, if $W$ is a Schottky group with good fundamental domain
$\mathcal{F}$, one can consider the affinoid algebra $\mathcal{O}(\mathcal{F})$ of
rigid analytic functions on the connected affinoid $\mathcal{F}$. The algorithm
in~\cite{MX25} produces a rational function $\phi$ supported on $\mathcal{F}$, and
a function $G \in \mathcal{O}(\mathcal{F})^\times$ such that the theta function
$(z-z_0,E)_{W}$ is approximated (to precision $O(p^N)$), for $z$ and $z_0$ in $\mathcal{F}$ by  $\phi(z) G(z)$.

\subsection{Mumford curves}\label{subsec:mumford}

\begin{defn}\label{D:}
	Let $C/K$ be a nice curve. In case it has a semistable model
  $\mathcal{C}/\O$ such that
	\begin{itemize}
		\item the normalization of any irreducible component of the special fiber $\mathcal{C}_k$ is isomorphic to $\mathbb{P}^1_k$, and
		\item every double point of $\mathcal{C}_k$ is $k$-rational with two
      $k$-rational branches,
	\end{itemize}
	we say that $C$ has \textit{split degenerate reduction}.
\end{defn}

\begin{thm}[Mumford,~\cite{Mu72}]\label{T:SchMum}
	Let $W\subset \PGL_2(K)$ be a Schottky group of rank $g\ge 2$. Then there
  is an isomorphism of rigid-analytic spaces
  $\Omega_W/W\simeq C_W^{\an}$, where $C_W^{\an}$ is the rigid analytification
  of a nice curve $C_W/K$ of genus $g$ with split degenerate reduction.
  Moreover, this association induces a bijection:
	\[
	\begin{aligned}
	\Bigg\{\begin{aligned}
		&\text{conjugacy classes of Schottky} \\
		& \ \ \ \ \ \text{groups in } \PGL_2(K)
	\end{aligned}\Bigg\}&\to \Bigg\{\begin{aligned}
		&\text{isomorphism classes of nice curves over} \\
		& \ \ \ K \text{ with split degenerate reduction}
	\end{aligned}\Bigg\}\\
	W &\mapsto C_W
	\end{aligned}
	\]
\end{thm}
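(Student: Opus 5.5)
We would prove Theorem~\ref{T:SchMum} in two halves: first construct the curve $C_W$ from $W$ (existence and properties), then show the resulting map is a bijection.

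\emph{Construction of $C_W$.} Start from good generators $w_1,\dots,w_g$ and a good fundamental domain $\mathcal{F}=\BP^1\setminus\bigcup_{i}B_i$. Then $\Omega_W/W$ is obtained from the connected affinoid $\mathcal{F}$ by gluing each boundary annulus $B_{-i}^+\setminus B_{-i}$ to $B_i^+\setminus B_i$ via $w_i$. Since $W$ acts freely and discontinuously on $\Omega_W$, and the translates $w\mathcal{F}$ form an admissible covering, this gives a separated rigid space $X_W=\Omega_W/W$. To see that $X_W$ is a one-dimensional proper rigid space, use the gluing description: it is covered by finitely many affinoids with good overlaps. Rigid GAGA (Kiehl, L\"utkebohmert) then makes $X_W$ the analytification of a nice curve $C_W$. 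To make the algebraization concrete, one can instead exhibit enough $W$-automorphic meromorphic functions to embed $X_W$ into projective space. These are built from the theta pairing of Definition~\ref{D:thetaW}: the functions $z\mapsto (D,z-z_0)_W$ are $W$-invariant up to multiplicative periods. Ratios with matching automorphy factors, for instance $(a-b,z-\infty)_W$ with $a,b$ chosen so that the periods cancel, give genuinely $W$-invariant functions.

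\emph{Genus and reduction type.} Compute $g$ using the holomorphic differentials $\dlog u_{w_i}$, where $u_{w_i}(z)=(z_0-w_i z_0, z-\infty)_W$. Show these are linearly independent and span $H^0(\Omega^1)$, for example via a residue argument on the annuli of $\mathcal{F}$. Split degenerate reduction comes from the combinatorics of $\mathcal{F}$:
\begin{itemize}
\item Reducing $\mathcal{F}$ with respect to the finite set of ball centres, as in the reduction maps of \S\ref{S:position}, gives a tree of $\BP^1_k$'s.
\item The identifications by $w_i$ glue pairs of ends, so the quotient graph is finite with first Betti number $g$.
\item Mumford's formal scheme $\widehat{\Omega}/W$ (the quotient of the Bruhat--Tits subtree spanned by the limit points) algebraizes to a semistable model of $C_W$. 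Its special fibre consists of rational components with rational double points, which is exactly split degenerate reduction.
\end{itemize}

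\emph{Bijection.} Well-definedness on conjugacy classes is immediate, since $\gamma$ maps $\Omega_W$ to $\Omega_{\gamma W\gamma^{-1}}$ equivariantly. For injectivity, an isomorphism $C_W\cong C_{W'}$ lifts to the universal topological covers $\Omega_W\to\Omega_{W'}$ in the rigid sense, because $\Omega_W$ is simply connected for the admissible topology: it has the Bruhat--Tits tree as skeleton. A rigid automorphism between such domains in $\BP^1$ extends to a M\"obius transformation, and this conjugates $W$ to $W'$.

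\emph{Main obstacle: surjectivity.} Given $C$ with split degenerate reduction, take the stable model $\mathcal{C}$ and its dual graph $G$, which has $h^1(G)=g$ because all components are rational. The formal completion along the special fibre can be unrolled along the universal cover tree $\widetilde G$, with $\pi_1(G)\cong F_g$ acting. One must then realise the resulting formal scheme over $\widetilde G$ as the formal completion of $\BP^1$ along a subtree of the Bruhat--Tits tree, equivariantly for a free action of $\pi_1(G)$ through $\PGL_2(K)$. This is Mumford's argument and requires building the embedding inductively while controlling the gluing data at double points. I would follow \cite{Mu72} and \cite[Chapter~III]{GP80} closely here rather than reprove it.
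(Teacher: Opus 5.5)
The paper does not prove this theorem; it only cites \cite{Mu72} (with \cite{GP80} for the rigid-analytic formulation). So deferring surjectivity to Mumford is consistent with the paper. The steps you argue yourself, however, contain one false claim and several unjustified ones.

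\textbf{Existence, properness and genus.} Your example $(a-b,z-\infty)_W=\theta_W(a,b;z)$ can never be $W$-invariant. Its divisor on $\Omega_W$ is $Wa-Wb$, so it would descend to a function on $C_W$ with divisor $\bar a-\bar b$. That is impossible for $g\ge 1$ unless $b\in Wa$, and in that case the function is $u_w$ for some $w\in W$, whose automorphy factors are nontrivial periods. One must instead take $(D,z-\infty)_W$ for a degree-$0$ divisor $D$ with $j_W(D)\in\Lambda$, and divide by a suitable product of the $u_{w_i}$.

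Next, ``covered by finitely many affinoids with good overlaps'' gives only quasi-compactness, not properness: a closed disc is a single affinoid but is not proper. Kiehl's criterion needs relatively compact shrinkings, for example comparing $\mathcal{F}$ with a fundamental domain enlarged by annuli of positive width.

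Likewise, residues give linear independence of the $\dlog u_{w_i}$. Spanning, however, needs the nontrivial fact that an invariant holomorphic differential with vanishing residues is zero.

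All three problems disappear if you follow Mumford throughout, as you already do for the reduction type. The quotient by $W$ of the formal scheme $\widehat\Omega$ attached to the subtree $T_\Sigma$ spanned by the limit set is proper over $\mathrm{Spf}\,\O$, and Grothendieck's existence theorem algebraizes it. Its generic fibre is $\Omega_W/W$, and the genus equals the arithmetic genus $b_1(T_\Sigma/W)=g$ of the special fibre. Neither rigid GAGA nor the residue argument is then needed.

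\textbf{Injectivity.} This is the main gap. The sentence ``a rigid automorphism between such domains in $\BP^1$ extends to a M\"obius transformation'' carries all the content, and the analogous statement fails for other domains in $\BP^1$. For instance, $z\mapsto z+z^2$ is an automorphism of the open unit disc, and affinoid subdomains have many non-M\"obius automorphisms. The claim holds for $\phi\colon\Omega_W\to\Omega_{W'}$ only because the complements $\Sigma_W,\Sigma_{W'}$ are compact perfect subsets of $\BP^1(K)$, and this has to be proved. One way:
\begin{enumerate}
\item $\phi$ induces a homeomorphism of skeleta and hence a bijection of ends $\Sigma_W\to\Sigma_{W'}$.
\item For $b\ne b'$ in $\Sigma_{W'}$ with corresponding ends $\beta,\beta'\in\Sigma_W$, the invertible function $(\phi-b)/(\phi-b')$ on $\Omega_W$ has the same $\Z$-valued boundary measure $\delta_\beta-\delta_{\beta'}$ as $(z-\beta)/(z-\beta')$.
\item By van der Put's description of invertible functions on $\Omega$ modulo constants by such measures on the limit set, the two functions agree up to a constant, which forces $\phi$ to be M\"obius.
\end{enumerate}
Alternatively, use Mumford's argument with line bundles on the formal tree.

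Two smaller corrections. The skeleton of $\Omega_W$ is $T_\Sigma$, not the full Bruhat--Tits tree, which is the skeleton of $\BP^1\setminus\BP^1(K)$. Also, ``simply connected'' must be meant in van der Put's sense of analytic coverings or in the Berkovich sense. Finally, the resulting M\"obius map lies in $\PGL_2(K)$, because it maps the infinite set $\Sigma_W\subset\BP^1(K)$ into $\BP^1(K)$.
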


\begin{defn}\label{D:}
	A nice curve $C/K$ is called a \textit{Mumford curve} if
  $C\simeq C_W$ for some Schottky group $W\subset \PGL_2(K)$.
\end{defn}

Thanks to Theorem~\ref{T:SchMum}, Mumford curves are precisely nice curves with
split degenerate reduction. More information can be found
in~\cite[Section~3]{vdPT26}. 
We emphasize that, for a Schottky group $W\subset \PGL_2(K)$, the rigid
isomorphism $\Omega_W/W\simeq
C_W^{\an}$ is induced by an analytic covering $u\colon \Omega_W \rightarrow C_W^{\an}$. This means that there is a finite admissible covering
$\{U_j\}$ of $C_W^{\an}$ by affinoids such that 
the covering $u$ is trivial above all $U_j$. Hence we obtain:
\begin{cor}\label{C:Mumext}
Let $L/K$ be a complete field extension. Then $u\colon \Omega_W \rightarrow
  C_W^{\an}$ induces a bijection $\Omega_W(L)/W\rightarrow C(L)$.
\end{cor}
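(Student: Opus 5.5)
The plan is to deduce the bijection $\Omega_W(L)/W\to C(L)$ from the fact that $u\colon\Omega_W\to C_W^{\an}$ is a topological covering, trivial over an admissible affinoid cover. I will prove well-definedness, injectivity and surjectivity in that order.

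\emph{Well-definedness.} For an $L$-point $z\in\Omega_W(L)$, the image $u(z)$ is an $L$-point of $C_W^{\an}$. Points of $C_W^{\an}$ with values in a complete extension $L$ of $K$ correspond to $C(L)$ by the analytification comparison. Since $u$ is $W$-invariant ($u\circ w=u$ for all $w\in W$), the map factors through the orbit set $\Omega_W(L)/W$.

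\emph{Injectivity.} This comes from the quotient description. The isomorphism $\Omega_W/W\simeq C_W^{\an}$ of Theorem~\ref{T:SchMum} means that the fibers of $u$ over geometric points (points over $\C_p$, or over $\widehat{\overline L}$) are precisely the $W$-orbits. So if $u(z)=u(z')$ with $z,z'\in\Omega_W(L)$, then $z'=wz$ for some $w\in W$. Since $W\subset\PGL_2(K)$ is defined over $K$, the orbit relation among $L$-points is the same as among geometric points.

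\emph{Surjectivity.} This is where the triviality of the covering enters, and I expect it to be the main obstacle. Let $P\in C(L)$, and choose an affinoid $U_j$ of the finite admissible covering containing $P$. By assumption $u^{-1}(U_j)=\bigsqcup_{w\in W}wV_j$, where $V_j\subset\Omega_W$ is an affinoid that $u$ maps isomorphically onto $U_j$. Because $u|_{V_j}\colon V_j\to U_j$ is an isomorphism of $K$-rigid spaces, it induces a bijection $V_j(L)\to U_j(L)$. Hence $P$ has a preimage $z\in V_j(L)\subset\Omega_W(L)$.

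The subtle points are two. First, $V_j$ and the isomorphism must be defined over $K$ and not merely over $\C_p$. I would justify this using the explicit construction in~\cite{GP80}: there $V_j$ can be taken as a translate of pieces of a good fundamental domain $\mathcal{F}$, whose balls have centres in $K$. Second, one must check that $L$-points of an affinoid correspond under base change, which is standard.
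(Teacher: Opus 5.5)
Your proposal is correct and follows the paper's route: the paper deduces the bijection on $L$-points directly from the fact that $u$ is trivial above each affinoid $U_j$ of a finite admissible covering of $C_W^{\an}$, which is exactly your surjectivity argument. The only difference is that you get injectivity from a description of the geometric fibers of the quotient. Injectivity also follows from the same trivialization: if $z\in wV_j$ and $z'\in w'V_j$ have the same image, then $w^{-1}z=w'^{-1}z'$ because $u|_{V_j}$ is an isomorphism. That version is preferable, since for large $L$ the fiber description you invoke is essentially the statement being proved.
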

It is in general quite difficult to make the correspondence in
Theorem~\ref{T:SchMum} explicit. If generators $w_1,\ldots,w_g$ for a
Schottky group $W$ of rank $g\ge 3$ are
given, then Gerritzen and van der Put discuss in~\cite[IV.4]{GP80} how one may compute the
canonical embedding of $C_W$ using the theta function on $\Omega_W$ defined
by
\begin{equation}\label{uw}
  u_w(z) \colonequals (z-\infty, \iota(w))_W\,;\quad w \in W\,.
\end{equation}
In this case, the canonical embedding $C_W\to \BP^{g-1}_K$ is given by
\begin{equation}\label{canemb}
  z\mapsto (\dlog u_{1}(z) : \cdots : \dlog u_{g}(z))\,,
\end{equation}
where we write $u_i\colonequals u_{w_i}$ for simplicity.
For non-hyperelliptic $C_W$, one may then use this to compute equations for
$C_W$ (see {\cite[\S3.3]{MR15}}).
The hyperelliptic case is due to van der Put~\cite{vdP78} and is
reviewed below in~\S\ref{subsec:unif}; see also~\cite{vdPT26}. 
On the other hand, it is only known how to compute the Schottky group $W$ from
an equation for $C_W$ for certain hyperelliptic curves; this is due to
Kadziela~\cite{Kad07} and is reviewed and extended below in
\S\ref{S:FixedFromRam}.

\subsection{Period lattice}\label{subsec:periods}

Manin and Drinfeld~\cite{MD73} used theta functions 
to uniformize the Jacobian $J$ of a Mumford curve $C$,
as we now recall.
The reduction of $J$ is split degenerate, 
and hence $J^{\an} \simeq T/\Lambda$ for a split analytic torus $T/K$ and a
multiplicative lattice $\Lambda$
inside $T$. These objects have the following interpretation in terms of the Schottky group
$W$. Let $W^{\ab} = W/[W,W]$, where $[W,W]$ is the commutator subgroup of
$W$. Then $T$ is the analytification of the split algebraic torus with
character group $W^{\ab}$, and we  can identify  $T(K)$
with $\Hom(W^{\ab},K^{\times})$. By properties of $(\cdot,\cdot)_W$, the pairing
\begin{equation}\label{periods}
  \langle\cdot  ,\cdot \rangle_{W^{\ab}}\colon W^{\ab}\times W^{\ab} \to K^{\times},\ \ \
(\beta_1,\beta_2)\mapsto (\iota(\beta_1),\iota(\beta_2))_W,
\end{equation}
is well-defined, symmetric and bimultiplicative. Every $\beta\in W^{\ab}$ defines a map
\[\ell_{\beta}\colon W^{\ab}\to K^{\times},\ \ \ \beta'\mapsto \langle
\beta,\beta'\rangle_{W^{\ab}},\]
and gives an element of $T(K)$. Then the lattice $\Lambda\subset T$ is given
by
\begin{equation}\label{}
  \Lambda = \{\ell_{\beta}\,:\, \beta \in W^{\ab}\}\,.
\end{equation}
We can make this explicit by fixing a choice of generators $w_1,\ldots,w_g$ of
$W$, giving a basis $b_i\colonequals w_i \bmod [W,W]$ of the
free abelian rank~$g$ group
$W^{\ab}$. By duality, this choice induces a splitting $\varphi\colon
T\simeq
(\G_m^{\an})^g$ given by the homomorphisms $b_i^*$ defined by
$b_i^*(b_j)=\delta_{ij}$. Hence we have 
\begin{equation}\label{}
  J^{\an}\simeq T/\Lambda \simeq (\G_m^{\an})^g/Q_W\,,
\end{equation}
where 
$Q_W\in
K^{g\times g}$ is the~\textit{period matrix} of  $W$ (or $J$) with respect
to $w_1,\ldots,w_g$, defined by
\begin{equation}\label{}
  Q_W = (\langle b_i, b_j\rangle_{W^{\ab}})_{i,j}\,.
\end{equation}
Moreover, the Abel--Jacobi embedding $C\hookrightarrow
J$ with respect to a fixed base point $P_0\in C(K)$  lifts to 
\begin{equation}\label{}
  j_W\colon \Omega_W\to T\,;\quad z\mapsto (\gamma\mapsto (\iota(\gamma),
  z-z_0)_W)\,,
\end{equation}
where $z_0\in \Omega_W(K)$ lifts $P_0$. If $z_0=\infty$ lifts $P_0$, then we have
\begin{equation}\label{}
  \varphi\circ j_W(z) = (u_1(z),\ldots ,u_g(z))\in (L^\times)^g\,,
\end{equation}
where $L/K$ is any extension such that $z\in \Omega_W(L)$.
To sum up, we have a commutative diagram of rigid analytic morphisms
\begin{equation}\label{D1}
\begin{tikzpicture}[>=stealth, node distance=3em]
  \matrix (m) [matrix of math nodes, row sep=3em, column sep=4em]{
    \Omega_W &  & T && (\G_m^{\an})^g\\
    \Omega_W/W & & T/\Lambda && (\G_m^{\an})^g/Q_W \\
    C^{\an} & & J^{\an} \\};
  \draw[->>] (m-1-1) edge (m-2-1);
   \draw[->]  (m-1-1) edge node[below] {$j_W$}(m-1-3);
  \draw[->>] (m-1-5) edge (m-2-5);
   \draw[->]  (m-1-3) edge node[below] {$\varphi$} node[above] {$\simeq$} (m-1-5);
   \draw[->]  (m-2-3) edge node[below] {$\varphi$} node[above] {$\simeq$} (m-2-5);
  \draw[->>] (m-1-3) edge (m-2-3);
   \draw[right hook-latex] (m-2-1) edge node[below] {$j_W$} (m-2-3);
   \draw[->] (m-2-1) edge node[left] {$\simeq$} (m-3-1);
   \draw[->] (m-2-3) edge node[left] {$\simeq$}  (m-3-3);
   \draw[->] (m-2-5) edge node[below] {$\simeq$}  (m-3-3);
   \draw[right hook-latex] (m-3-1) edge node[below] {$j$} (m-3-3);
\end{tikzpicture}
\end{equation}
See~\cite[VI.2]{GP80} and~\cite[\S2]{Wer96} for more details.

\section{Hyperelliptic Mumford curves}\label{S:unif_th}

The main result of this section is an explicit rigid analytic
uniformization of a
hyperelliptic curve $C$ with split degenerate reduction using theta
functions (see Theorem~\ref{T:H}), essentially
following~\cite[\S6]{vdPT26}. The first step is to find generators of the corresponding
Schottky group. This was done in~\cite{vdP78};  the idea is to find a
theta function that gives a suitable rigid analytic uniformization 
$F\colon \Omega\to \BP^1$ and lift it via the double cover $\pi\colon C\to \BP^1$. The resulting 
Schottky group $W$ is called a Whittaker group. A set of generators can be
described in terms of the branch points of $\pi$ and and the theta
function $F$. In this section, we describe how to construct another theta function $H$ such that
$(F,H)\colon \Omega\to C$ is a rigid analytic uniformization (see
also~\cite[\S6]{vdPT26}).
Finally, we discuss the case where the genus of $C$ is two in more detail
in~\S\ref{S:g2mum}.

We keep the notation of the previous section.

\subsection{Hyperelliptic uniformization via Whittaker groups}\label{subsec:unif}

 Suppose that $g\ge 2$ and that $\T=\{a_0,b_0,\dots, a_g,b_g\}\subset \BP^1(K)$ is in good
 position. Let $s_i\in \mathrm{PGL}_2(K)$ be the unique
  matrix of order $2$ with fixed points $\{a_i,b_i\}$. 
 Then the group $$\Gamma(\T) = \pair{s_0,\dots,s_g}
 \equalscolon \Gamma$$ is not free, hence not Schottky. But
 it is discontinuous, and 
 we can construct an index 2 subgroup $W\subset \Gamma$ that is Schottky,
 as we now explain. 

Let $\Sigma$ be the set of limit points of $\Gamma$ and let
$\Omega_\Gamma =
\BP^1\setminus\Sigma$ be the set of ordinary points of $\Gamma$. 
There is a natural morphism $\varphi\colon \Gamma\to
\Z/2\Z$, sending all $s_i$ to $-1$. Let $W$ denote its kernel and for
$i=0,\ldots,g-1$ let
\begin{equation}\label{wi}
  w_{i+1}\colonequals s_is_g\,.
\end{equation}
\begin{prop}[van der Put,~\cite{vdP78}]\label{P:W}
    The group $W$ is Schottky of rank $g$, and $W$
     is freely generated by $w_1,\ldots,w_{g}$.    
\end{prop}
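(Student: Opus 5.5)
We argue in two steps. First, the group theory: since $\T$ is in good position, $\Gamma = \angles{s_0} * \cdots * \angles{s_g}$ is a free product of $g+1$ copies of $\Z/2\Z$. The homomorphism $\varphi\colon\Gamma\to\Z/2\Z$ is surjective, so $W=\ker\varphi$ has index~2. To see that $W$ is free on $w_1,\ldots,w_g$ with $w_{i+1}=s_is_g$, apply the Reidemeister--Schreier method with Schreier transversal $\{1,s_g\}$. The Schreier generators are $s_is_g$ and $s_g s_i$ for $i\ne g$, together with $s_g^2=1$. Since $s_gs_i=(s_is_g)^{-1}$, the subgroup $W$ is generated by the $w_{i+1}$.

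The relations $s_i^2=1$ rewrite to $1$ (from $s_i\cdot s_i$ with coset representative $1$) and to $s_g s_i s_i s_g$, which is trivial in terms of the generators $(s_gs_i)(s_is_g)=w^{-1}w$. Hence no nontrivial relations remain and $W$ is free of rank $g$ on $w_1,\dots,w_g$. Alternatively, one can argue directly via reduced words in the free product: an element of $W$ is a reduced word of even length $s_{i_1}\cdots s_{i_{2m}}$. Inserting $s_g s_g=1$ between consecutive letters writes it as a word in the $w_{i+1}^{\pm1}$, and one checks that a nontrivial reduced word in the $w$'s gives a nontrivial reduced word in the $s$'s. The only cancellation occurs between $s_g$ factors, and the adjacent $s_i,s_j$ letters are distinct in the reduced $w$-word whenever they are not both $s_g$.

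Second, discontinuity: $W\subset\Gamma$ has the same limit set as $\Gamma$, since any sequence of distinct elements of $\Gamma$ has an infinite subsequence in $W$ or in $W s_g$, and $Ws_g$ applied to $x$ equals $W$ applied to $s_gx$. Hence every ordinary point of $\Gamma$ is ordinary for $W$, so $W$ is discontinuous with $\Omega_W=\Omega_\Gamma\neq\emptyset$. Since $W$ is finitely generated, discontinuous (and discrete, as a subgroup of the discontinuous group $\Gamma$) and free, Lemma~\ref{L:DiscSchott}(2) shows that $W$ is Schottky, in particular torsion-free, of rank $g$.

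The main point requiring care is the freeness check, i.e.\ that no hidden relations among the $w_i$ arise. The Reidemeister--Schreier computation above handles this cleanly, so the hypothesis of good position, namely that $\Gamma$ really is the free product, is exactly what is used.
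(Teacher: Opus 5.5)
Your proof is correct, and it follows a different route from the paper's, since the paper gives no proof of its own.

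\textbf{Comparison with the paper.} The paper attributes the proposition to van der Put~\cite{vdP78}. That source works in closed disk position, where Lemma~\ref{L:ballgood} provides the fundamental domain $\BP^1\setminus(\calB_0\cup\cdots\cup\calB_g)$ for $\Gamma$. There a geometric argument via fundamental domains is the natural one. Your argument is purely combinatorial and needs only the two ingredients of Definition~\ref{elts:gens_in_good_pos}:
\begin{itemize}
\item $\Gamma=\angles{s_0}*\cdots*\angles{s_g}$, which is also what makes $\varphi$ well defined and gives freeness of $W$ on $w_1,\ldots,w_g$;
\item $\Gamma$ is discontinuous, which with your comparison of limit sets gives $\Omega_W=\Omega_\Gamma\neq\emptyset$.
\end{itemize}
So you prove the statement in exactly the generality the paper states it (good position, not just closed disk position), without constructing a fundamental domain. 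What your route does not give is the geometric information that the $w_i$ are good generators for an explicit good fundamental domain. The paper needs that later for theta functions, and in strong Kadziela position it records it in a separate lemma.

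\textbf{Bookkeeping in the Reidemeister--Schreier step.} The relator $s_i^2$ conjugated by the representative $1$ does not rewrite to $1$. It rewrites to $(s_is_g)(s_gs_i)$, and with representative $s_g$ it rewrites to $(s_gs_i)(s_is_g)$. These are exactly the relations that eliminate the Schreier generators $s_gs_i$ in favour of $(s_is_g)^{-1}$. You had already used that identification when you discarded those generators. Similarly, $s_g^2$ rewrites to the relation killing the Schreier generator $s_g^2$. After these Tietze eliminations no relations remain, so your conclusion stands. Your alternative normal-form argument in the free product avoids this bookkeeping and is complete as sketched.

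\textbf{The appeal to Lemma~\ref{L:DiscSchott}(2) is unnecessary.} By Definition~\ref{D:Schottky}, a Schottky group is discontinuous, finitely generated and torsion-free, and a free group is torsion-free. You are therefore done once you have shown $\Omega_W=\Omega_\Gamma\neq\emptyset$; no discreteness argument is needed.
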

We will denote $\Omega =\Omega_W$ for the rest of this
section; we have $\Omega=\Omega_\Gamma$ by Proposition~\ref{L:DiscSchott}.
\begin{defn}\label{D:}
  We call the group $W$ a {\em ($p$-adic) Whittaker group}.
\end{defn}

As explained by van der Put~\cite{vdP78}, the group $\Gamma$ provides an explicit uniformization of $\BP^1$, as we
now recall.
Fix distinct $a,b \in \Omega$ such that  
$\infty \notin \Gamma a \cup \Gamma b$.
In contrast to other treatments in the literature, we do allow that $a,b$ or $\infty$ are among the $a_0,b_0,\dots a_g,b_g$.
We will use the trivial observation that for any group homomorphism
$c\colon \Gamma \to G$, we have $\#c(\Gamma)\le 2$.

Consider the theta function
\begin{equation}\label{F}
  F(z) \colonequals F_{a,b}(z) \colonequals (a-b, z-\infty)_\Gamma =
  \prod_{\gamma\in\Gamma}\frac{z-\gamma a}{z-\gamma b}\,.
\end{equation}
\begin{lemma}\label{L:FGamma}
 The function $F$ is $\Gamma$-invariant.
\end{lemma}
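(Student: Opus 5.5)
The plan is to show that the quotient $F(\sigma z)/F(z)$ defines a homomorphism $\Gamma\to K^\times$ that is independent of $z$, and then to show that this homomorphism is trivial on each generator $s_i$.

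\textbf{Step 1 (invariance of the pairing in one variable).} First I would check that $(D,\gamma E)_\Gamma=(D,E)_\Gamma$ for all $\gamma\in\Gamma$ and all $D,E\in Z^0(\Omega)$. Write $\Gamma=\bigcup_{i=1}^m W\gamma_i$ as in Definition~\ref{D:thetaGamma}. Then right multiplication by $\gamma$ permutes the cosets: $\gamma_i\gamma=w_i'\gamma_{\pi(i)}$ for some $w_i'\in W$ and some permutation $\pi$ of $\{1,\ldots,m\}$. This gives
\[
(D,\gamma E)_\Gamma=\prod_i (D,w_i'\gamma_{\pi(i)}E)_W=\prod_i(D,\gamma_{\pi(i)}E)_W=(D,E)_\Gamma .
\]
The middle equality uses $(D,wE)_W=(D,E)_W$ for $w\in W$. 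This holds because $(\cdot,\cdot)_W$ factors through the coinvariants $Z^0(\Omega_W(K))_W$, by the lemma following Definition~\ref{D:thetaW}. Alternatively, one can use the well-definedness statement \cite[Proposition~3.6]{MX25}, which already encodes independence of the coset representatives.

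\textbf{Step 2 (a homomorphism).} For $\sigma\in\Gamma$ and $z\in\Omega$ with $z,\sigma z\neq\infty$, bilinearity gives
\[
c_\sigma(z)\colonequals\frac{F(\sigma z)}{F(z)}=(a-b,\sigma z-z)_\Gamma .
\]
To see that this is independent of $z$, note that $\sigma z-z$ and $\sigma z'-z'$ differ by $\sigma(z-z')-(z-z')$, and Step~1 kills the pairing of $a-b$ against any divisor of the form $\sigma E'-E'$. The cocycle identity $\sigma\tau z-z=\sigma(\tau z)-\tau z+(\tau z-z)$ then gives $c_{\sigma\tau}=c_\sigma c_\tau$. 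So $c\colon\Gamma\to K^\times$ is a homomorphism.

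\textbf{Step 3 (triviality on generators).} Since $s_i^2=1$, we get $c(s_i)=\pm1$. To show $c(s_i)=1$, I would evaluate at a fixed point, taking $z=a_i$: then $s_iz-z=0$ and so $c(s_i)=1$. This requires $a_i\in\Omega$, which holds because fixed points of elliptic elements of a discontinuous group are ordinary points. If one prefers to avoid that fact, use continuity instead. The function $z\mapsto(a-b,s_iz-z)_\Gamma$ is locally analytic on $\Omega$ (it is a quotient of values of $F$) and takes the constant value $\pm1$. As $z\to a_i$, the divisor $s_iz-z$ tends to $0$, so the value tends to $(a-b,0)_\Gamma=1$. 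Hence $c$ is trivial on all generators, so $c\equiv1$, and $F$ is $\Gamma$-invariant.

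\textbf{Main obstacle and loose ends.} The main care point is Step~1, namely making sure the invariance $(D,wE)_W=(D,E)_W$ is legitimate with the limit-over-$W_n$ definition, rather than merely formal reindexing of an infinite product. I would rely on the coinvariance lemma for this rather than redo the convergence argument. The hypothesis $\infty\notin\Gamma a\cup\Gamma b$ guarantees that $z-\infty$ and the expressions above are admissible divisors in $\Omega$. Points where $z=\infty$ or $\sigma z=\infty$ are handled by continuity or by Remark~\ref{R:WeilRec}.
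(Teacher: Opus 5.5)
\textbf{There is a gap: your argument does not cover the case where $a$ or $b$ is a fixed point, which is exactly the case the paper uses.}

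Your Steps~1 and~2 are sound. When the fixed point you evaluate at is an ordinary point outside $\Gamma a\cup\Gamma b$, Step~3 is a self-contained argument. The paper instead cites \cite[Theorem~2.5]{vdPT26} for $a\neq b$ in $\Omega^*$ and then extends.

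The gap is the complementary case, which the lemma explicitly covers. The paper allows $a,b$ to be among the fixed points, and its algorithms use $F_{0,1}$ with $a=a_0=0$ and $b=a_g=1$. So for $s_0$ and $s_g$ you evaluate precisely at a degenerate point.

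\textbf{Why Step~3 fails there.} Suppose $a_i\in\Gamma a\cup\Gamma b$. Comparing $z=a_i$ with a generic $z'$ amounts to writing $s_iz'-z'=s_iE-E$ with $E=z'-a_i$. But then $(a-b,E)_\Gamma$ and $(a-b,s_iE)_\Gamma$ are not in $K^\times$, so Step~1 gives no relation between them.

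The continuity fallback fails for the same reason. If $a=a_i$, the $\gamma=1$ factor $\frac{a_i-s_iz}{a_i-z}\cdot\frac{b-z}{b-s_iz}$ tends to $s_i'(a_i)=-1$, not $1$, as $z\to a_i$, and so does the $\gamma=s_i$ factor. Hence ``the divisor tends to $0$, so the value tends to $1$'' is not justified.

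Switching to $b_i$ helps only when $b_i\notin\Gamma a\cup\Gamma b$. That fails, for example, for $a=a_0$, $b=b_0$.

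Separately, your claim that fixed points of elliptic elements of a discontinuous group are ordinary is false in general. For example, up to conjugation, $\langle z\mapsto -z,\ z\mapsto qz\rangle$ with $0<|q|<1$ has the fixed points $0,\infty$ of $z\mapsto -z$ as limit points. It does hold in the present setting (the paper uses $a_i,b_i\in\Omega$ throughout), but your continuity fallback needs it as well.

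\textbf{Two ways to close the gap.}
\begin{enumerate}
\item Follow the paper. Once the generic case is known, fix a generic $z$. Then $c_{a,b}(s_i)=(a-b,s_iz-z)_\Gamma$ depends analytically on $(a,b)$ and takes values in $\{\pm1\}$. Connectedness of $\Omega$ then propagates the value $1$ to all admissible $a,b$.
\item Avoid fixed points altogether. Consider $c(s_i)=\prod_{\gamma\in\Gamma}(a-b,\gamma s_iz-\gamma z)$. Its factors tend to $1$, so the product converges unconditionally and may be regrouped. The factors indexed by $\gamma$ and $\gamma s_i$ are reciprocal. Grouping over the cosets $\gamma\langle s_i\rangle$ therefore gives $c(s_i)=1$ for every admissible pair $a,b$ and every $z\notin\Gamma a\cup\Gamma b$.
\end{enumerate}
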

\begin{proof}
  Let $\Omega^*\colonequals \Omega - \displaystyle\bigcup^g_{i=0} \big(\Gamma\cdot a_i\cup \Gamma\cdot
  b_i\big)\,.$
  The proof of~\cite[Theorem~2.5]{vdPT26} shows the result for 
$(a,b)\in \Omega^* \times \Omega^* - \Delta$. 
  By allowing the points $a,b$ to move and using connectedness, we extend
  this to all distinct $a,b\in \Omega$ such that $\infty \notin \Gamma a\cup \Gamma
  b$.
 \end{proof}
\begin{cor}\label{L:P1}~\cite[Proposition~4]{vdP78}.
The theta function $F$ induces an isomorphism $\tilde{F}\colon
  \Omega/\Gamma\simeq \BP^{1,an}$ of rigid analytic spaces.
\end{cor}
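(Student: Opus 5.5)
The plan is to prove Corollary~\ref{L:P1} in three steps. First, since $F$ is $\Gamma$-invariant by Lemma~\ref{L:FGamma}, it descends to a rigid analytic morphism $\tilde F\colon \Omega/\Gamma\to \BP^{1,\an}$. Second, we check that $\tilde F$ has degree one. Third, we check that the quotient $\Omega/\Gamma$ is complete and then conclude.

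For the descent, recall that $\Gamma$ acts discontinuously on $\Omega$, with finite stabilizers of order $2$ at the fixed points $\Gamma a_i\cup\Gamma b_i$. The quotient $\Omega/\Gamma$ exists as a rigid analytic space, since it is the quotient of the Mumford curve $\Omega/W$ by the finite group $\Gamma/W\simeq \Z/2\Z$ (Proposition~\ref{P:W}). Being $\Gamma$-invariant, $F$ factors through this quotient.

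For the degree, I will count zeros and poles of $F$ on a fundamental domain. The product~\eqref{F} shows that the zeros of $F$ in $\Omega$ are exactly the orbit $\Gamma a$. If $a$ is not a fixed point of any element of $\Gamma$, each point of $\Gamma a$ is a simple zero. If $a$ is fixed by an involution $\gamma s_i\gamma^{-1}$, then $\gamma a$ appears twice in the product, giving a double zero; but in that case the local coordinate on the quotient is the square of the coordinate on $\Omega$, so the zero is again simple downstairs. The same analysis applies to the poles $\Gamma b$. Hence $\tilde F$ has exactly one zero and one pole on $\Omega/\Gamma$. The proviso $\infty\notin\Gamma a\cup\Gamma b$ ensures that the factor $z-\infty$ does not interfere.

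The main obstacle is showing that $\Omega/\Gamma$ is complete, so that a degree-one map to $\BP^1$ is an isomorphism. By Theorem~\ref{T:SchMum}, $\Omega/W$ is the analytification of a projective curve $C_W$. Its quotient by the involution induced by $s_g$ is then a projective curve, hence proper, so $\tilde F$ is a finite morphism of proper rigid curves. A finite morphism with a single simple zero has degree one, and therefore $\tilde F$ is an isomorphism.

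To see that the quotient of $C_W$ is connected of genus $0$, one can use Riemann--Hurwitz for $C_W\to C_W/\langle s_g\rangle$. The induced involution has $2g+2$ fixed points, namely the images of the $a_i,b_i$, which are pairwise inequivalent under $W$. Riemann--Hurwitz then gives $2g-2=2(2g_0-2)+2g+2$, so $g_0=0$. This agrees with the degree computation above.
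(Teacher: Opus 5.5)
Your argument is correct and follows the route the paper intends. The paper gives no separate proof: it records the statement as a consequence of the $\Gamma$-invariance in Lemma~\ref{L:FGamma} together with \cite[Proposition~4]{vdP78}, and your zero--pole count on the proper curve $(\Omega/W)/\langle s_g\rangle$, with the order-$2$ stabilizers at $\Gamma a_i\cup\Gamma b_i$ handled correctly, is the standard way to fill in that citation.

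The one point worth making explicit is that ``exactly one zero and one pole'' requires $b\notin\Gamma a$. If $b\in\Gamma a$, the divisor of $F$ is zero and $F$ is constant, so this hypothesis is implicit in the paper's statement as well.
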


\begin{rk}\label{R:}
  For two different choices for $(a,b)$, the resulting theta functions $F$
  define the same field extension.
\end{rk}

Since $W$ is a Schottky group with set of ordinary points $\Omega$, the
rigid quotient $\Omega/W$ is a Mumford curve and we let $C\colonequals C_W$ denote the
corresponding algebraic curve. The natural
covering $\phi\colon \Omega/W \to \Omega/\Gamma$ has degree~2, so we find that $C$ is 
hyperelliptic.
The $\Gamma$-invariant
meromorphic function $F$ induces an element $x\in K(C)$. By Riemann--Roch,
there is a function $y\in K(C)$ such that $K(C)=K(x,y)$ and
$y^2$ is a polynomial in $x$ of degree $2g+1$ or $2g+2$. 
In other words, we have a commutative diagram
 \[
\begin{tikzpicture}[>=stealth, node distance=3em]
  \matrix (m) [matrix of math nodes, row sep=3em, column sep=4em]{
    \Omega & & \\
    \Omega/W & & \Omega/\Gamma \\
    C^{\an} & & \mathbb{P}^{1,\an} \\};
  \path[->]
    (m-1-1) edge (m-2-1)
    (m-1-1) edge (m-2-3)
    (m-2-1) edge node[below] {$2:1$} (m-2-3)
    (m-2-1) edge node[left] {$\simeq$} node[right] {$\tilde{u}$}(m-3-1)
    (m-2-3) edge node[left] {$\simeq$} node[right] {$\tilde{F}$} (m-3-3)
    (m-3-1) edge node[below] {$2:1$} (m-3-3);
\end{tikzpicture}
\]
 The composite  map $u\colon \Omega\to\Omega/W\simeq C^{\an}$ is an analytic covering as
 in~\S\ref{subsec:mumford}.

By Corollary~\ref{L:P1} the set $\B_C$ of branch points for the
map $(x,y)\mapsto x$ is given by
\begin{equation}\label{}
  \B_C = \{F(a_0),F(b_0),\dots ,F(b_g)\}\,.
\end{equation}
 Since the function $F$ is defined over $K$ and $K$ is complete, the branch points are also defined over $K$.   
 This leads to  the following result.
\begin{prop}\label{P:hyp_eqn}~\emph{(\cite[Theorem~5]{vdP78},~\cite[Proposition~6.1]{vdPT26})}.
There exists a constant $c\in K^\times$ such that
  $C$ has an equation 
  \begin{equation}\label{Ceqn}
  Y^2 = c\cdot \prod^g_{i=0}\big(X-F(a_i)Z\big)\big(X-F(b_i)Z\big)
  \end{equation}
  in the weighted projective plane  over $K$ with weights $1, g+1, 1$
  attached to $X,Y,Z$, respectively.
\end{prop}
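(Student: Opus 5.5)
Throughout, $C=C_W$ is hyperelliptic with hyperelliptic map $x\colon C\to\BP^1$ induced by $F$, as in the diagram above. The plan is to identify the branch locus of $x$ and then apply the standard normal form for double covers of $\BP^1$.

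\textbf{Step 1 (the hyperelliptic involution).} By Corollary~\ref{L:P1}, $\tilde F$ identifies $\Omega/\Gamma$ with $\BP^{1,\an}$. The map $\phi\colon\Omega/W\to\Omega/\Gamma$ is the quotient by $\Gamma/W\simeq\Z/2\Z$. So the nontrivial deck involution of $x$ is induced by any $s_i$, say by $s_g$. Since $x$ has degree $2$ and $C$ has genus $g\ge2$, this involution is the hyperelliptic involution.

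\textbf{Step 2 (finding the ramification points).} I will show that the ramification points of $\phi$ are exactly the images in $\Omega/W$ of $a_0,b_0,\dots,a_g,b_g$, and that these give $2g+2$ distinct points of $C$.

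A point $z\in\Omega$ maps to a ramification point exactly when $\gamma z=z$ for some $\gamma\in\Gamma\setminus W$. Since $W$ is Schottky, its nontrivial elements have no fixed points in $\Omega$, so $\gamma$ has finite order. In the free product $\angles{s_0}*\cdots*\angles{s_g}$, every torsion element is conjugate to some $s_i$. Hence $z\in\Gamma a_i\cup\Gamma b_i$ for some $i$, as claimed.

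For distinctness, the fixed points $a_i,b_i$ of $s_i$ lie in $\Omega$. By Riemann--Hurwitz, a degree-$2$ cover of $\BP^1$ by a genus-$g$ curve has exactly $2g+2$ ramification points. The orbits $\Gamma a_i$, $\Gamma b_i$ give at most $2g+2$ points of $\Omega/W$, so all of them must be distinct. I expect this orbit bookkeeping to be the main obstacle: one has to check that $a_i$ and $b_i$ are not $W$-equivalent, and that fixed points of different $s_i$ are not equivalent. The Riemann--Hurwitz count is the cleanest way through it.

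\textbf{Step 3 (branch values are in $K$).} Applying $\tilde F$, the branch points are $\B_C=\{F(a_i),F(b_i)\}$. These values lie in $K$, because $F$ is a convergent product with coefficients in $K$ and each $a_i,b_i\in\BP^1(K)$. Some $F(a_i)$ may be $\infty$. This is handled by the weighted homogeneous form: the corresponding factor $X-F(a_i)Z$ is read as $Z$, which lowers the affine degree to $2g+1$.

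\textbf{Step 4 (the equation).} Choose $y$ as in the Riemann--Roch discussion, with $K(C)=K(x,y)$. Changing $y$ by $y\mapsto\alpha y+\beta(x)$ with $\beta$ a polynomial, which is allowed since $p>2$, we may assume the hyperelliptic involution acts by $y\mapsto -y$. Then $y^2=f(x)$ with $f\in K[x]$ squarefree of degree $2g+1$ or $2g+2$. The branch points of $x$ are exactly the roots of $f$, together with $\infty$ if $\deg f$ is odd. Comparing with $\B_C$ gives $f=c\prod(x-F(a_i))(x-F(b_i))$, with the factors at $\infty$ omitted and $c\in K^\times$ the leading coefficient of $f$. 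Homogenizing in weights $(1,g+1,1)$ yields~\eqref{Ceqn}.
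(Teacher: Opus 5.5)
Your proposal is correct and follows essentially the paper's route: identify the branch locus of $x$ with $F(\T)$ via $\tilde F\colon \Omega/\Gamma\simeq \BP^{1,\an}$, note that these values lie in $\BP^1(K)$, and write down the normal form of a double cover of $\BP^1$ with that branch locus. Your Step~2 spells out what the paper leaves implicit: order-two elements of the free product are conjugate to some $s_i$, and Riemann--Hurwitz forces the $2g+2$ images to be distinct. One cosmetic point is that completing the square needs only characteristic $0$, not $p>2$.
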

 This is not surprising, since the curve $C$ is determined by the branch locus of its 2-1
 covering $\pi\colon C \to \BP^1$, and the latter is isomorphic to $\Omega/\Gamma$ via $F$.
 Conversely, every split 
 degenerate hyperelliptic curve over $K$ is parametrized by a
 Whittaker group, unique up to conjugation~\cite[Theorem~2.7]{vdPT26}.

\subsection{Constructing the function $y$ using theta functions on
$\Omega$}\label{subsec:y}

In order to give an explicit Mumford uniformization of the hyperelliptic
Mumford curve $C$,
we will now lift the function $y\in K(C) = K(x,y)$ to a
meromorphic function on $\Omega$.
This was first done by van der Put and Top in~\cite[\S6]{vdPT26}; we
follow their proof, but we work in slightly greater generality and use the language of theta
 pairings introduced in~\cite[\S3]{MX25} and~\S\ref{S:Theta}.

\begin{lemma}\label{L:ytilde}
The function $y\in K(C)$ is
induced by a unique meromorphic function $\tilde{y}$ on $\Omega$.
It satisfies the following properties and is uniquely
determined by them up to a constant:
\begin{enumerate}
  \item\label{Winv} $\tilde{y}$ is $W$-invariant.
  \item\label{notGammainv} $\tilde{y}(s_iz)=-\tilde{y}(z)$ for all $i\in\{0,\ldots,g\}$.
  \item\label{divy} The divisor of $\tilde{y}$ is the $W$-orbit of $ D
    \colonequals\displaystyle\sum^{2g}_{i=0}( a_i+ b_i)
    -(g+1)b-(g+1)s_gb\,.$
\end{enumerate}
\end{lemma}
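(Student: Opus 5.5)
We set $\tilde y\colonequals y\circ u$, where $u\colon\Omega\to\Omega/W\simeq C^{\an}$ is the analytic covering of \S\ref{subsec:mumford}. Since $u$ is a local isomorphism and $y$ is a rational function on $C$, $\tilde y$ is a meromorphic function on $\Omega$. It is the unique function inducing $y$, because $u$ is surjective. Property~\eqref{Winv} is then immediate from the construction.

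For property~\eqref{notGammainv} we use the diagram above. Since $W$ is the kernel of $\varphi\colon\Gamma\to\Z/2\Z$, it is normal of index $2$ in $\Gamma$, and every $s_i$ lies in the nontrivial coset $\Gamma\setminus W$. Hence each $s_i$ induces the same nontrivial automorphism $\sigma$ of $\Omega/W$, namely the deck transformation of the degree $2$ cover $\Omega/W\to\Omega/\Gamma$. By Lemma~\ref{L:FGamma} and Corollary~\ref{L:P1}, this cover is identified with $x\colon C\to\BP^1$, so $\sigma$ is the hyperelliptic involution. On the model~\eqref{Ceqn} the hyperelliptic involution is $(X,Y,Z)\mapsto(X,-Y,Z)$, so $y\circ\sigma=-y$. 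This gives $\tilde y(s_iz)=-\tilde y(z)$.

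For property~\eqref{divy}, since $u$ is unramified, $\div(\tilde y)$ is the pullback of $\div(y)$ with the same multiplicities, so it suffices to compute $\div(y)$ on $C$ and identify the fibres of $u$.
\begin{itemize}
\item On the model~\eqref{Ceqn}, $y$ has simple zeros exactly at the $2g+2$ Weierstrass points, lying over $\B_C=\{F(a_i),F(b_i)\}$. It has poles of order $g+1$ at each of the two points over $x=\infty$; this uses that we are in the degree $2g+2$ case.
\item The Weierstrass point over $F(a_i)$ is $u(a_i)$. Since $s_ia_i=a_i$ and $\Gamma=W\cup Ws_i$, we get $\Gamma a_i=Wa_i$, so its fibre under $u$ is exactly the $W$-orbit of $a_i$. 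The same holds for $b_i$.
\item By~\eqref{F}, $F$ has simple poles exactly along $\Gamma b=Wb\cup Ws_gb$. These two $W$-orbits map to the two points of $C$ over $x=\infty$.
\end{itemize}
Together these give that $\div(\tilde y)$ is the $W$-orbit of $D$. The index in the definition of $D$ should run over $i=0,\dots,g$.

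For uniqueness up to a constant, let $\tilde y'$ be another function satisfying \eqref{Winv} and \eqref{divy}. Then $\tilde y'/\tilde y$ is $W$-invariant and has neither zeros nor poles on $\Omega$. It therefore descends to a function on the projective curve $C$ with trivial divisor, which is a constant. Property \eqref{notGammainv} is then automatic.

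The main obstacle is the bookkeeping in \eqref{divy}, specifically the claim that $Wb$ and $Ws_gb$ are distinct orbits. This requires $b$ not to be a fixed point of any element of $\Gamma\setminus W$, i.e.\ $F(b)=\infty\notin\B_C$. When $\infty$ is a branch point, $y^2$ has degree $2g+1$ and the formula for $D$ must be adapted. Since the paper allows $a,b$ to be among the $a_i,b_i$, I would first check that the hypothesis $\infty\notin\Gamma a\cup\Gamma b$, together with the choice of model, excludes this case. Otherwise I would restrict the statement to it.
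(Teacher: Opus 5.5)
Your route is the same as the paper's: set $\tilde y=y\circ u$, get $W$-invariance from $\Omega/W\simeq C^{\an}$, get the sign from the hyperelliptic involution, and read off the divisor from that of $y$. Properties (1) and (2), uniqueness up to a constant, and your remark on the index range are fine.

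\textbf{The gap is in (3).} Your computation only covers $\infty\notin\B_C$, and that case cannot be excluded. The hypothesis $\infty\notin\Gamma a\cup\Gamma b$ is about the coordinate point $\infty$. It says nothing about whether $b$ lies in the $\Gamma$-orbit of a fixed point. In \S\ref{S:FixedFromRam} the paper uses exactly $F=F_{0,1}$, so $b=1=a_g$. There $F(1)=\infty\in\B_C$ and $y^2$ has degree $2g+1$. Restricting the statement would discard the case the paper actually needs, e.g.\ for Theorem~\ref{T:H} with this $F$.

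\textbf{$D$ does not need to be adapted in that case.} Suppose $F(b)=\infty=F(t)$ for some $t\in\T$. Then $b\in\Gamma t$ by Corollary~\ref{L:P1}. For the $s_i$ fixing $t$ we have $\Gamma t=Wt\cup Ws_it=Wt$, so $Wb=Ws_gb=Wt$. The $W$-orbit of $D$ therefore has multiplicity $1-(g+1)-(g+1)=-(2g+1)$ along $Wt$. This is exactly the pole order of $y$ at the unique Weierstrass point $u(t)$ over $x=\infty$. The other $2g+1$ fixed points give the simple zeros, as in your computation.

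The paper's short proof handles both cases at once, via three facts:
\begin{itemize}
\item On $\BP^1$, $\div(y^2)=\sum_{i=0}^{g}(F(a_i)+F(b_i))-(2g+2)\infty$ whether or not $\infty$ is a branch point, since a branch point at $\infty$ contributes $+\infty$.
\item For $t\in\T$, $F^*(F(t))=2\,Wt$.
\item $F^*(\infty)$ is the $W$-orbit of $b+s_gb$. In the ramified case this equals $2\,Wb$, with $F$ having double poles there; this is what ``$F(b)=F(s_gb)=\infty$'' records.
\end{itemize}
Pulling back $\div(y^2)$ along $F$ and halving gives (3) uniformly.
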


\begin{proof}
  The existence and uniqueness of the function $\tilde{y}$ are clear. It has to be
  $W$-invariant since $C^{\an}\simeq \Omega/W$.
  The second property follows since $y\in K(C)$ is sent to $-y$ by the hyperelliptic involution.
Moreover, the divisor of $y^2\in K(\mathbb{P}^1)$ is 
  $\sum_{i=1}^{2g+2} (F(a_i)+F(b_i))-(2g+2)\infty$, and we have $F(b)
  =F(s_gb)=\infty$. 
\end{proof}
We now want to construct the function $\tilde{y}$ explicitly. As a starting point, we
take~(\ref{divy}); it is satisfied by the theta function
\begin{equation}\label{}
  z\mapsto (z-\infty, D)_W\,.
\end{equation}
However, a computation shows that this function is not $W$-invariant. To
make it $W$-invariant, we modify it
without changing its divisor. 
Define $w'\colonequals w_0\cdots w_{g-1}$ and set 
\begin{equation}\label{}
  E \colonequals [D] + \iota(w') \in Z^0(\Omega_W(K))_W \,,
\end{equation}
  where $[\cdot]$ denotes the class in $Z^0(\Omega_W(K))_W$.
We will show below that the following theta function is in fact equal to
$\tilde{y}$ up to constant:
\begin{equation}\label{}
  H(z) \colonequals (z-\infty, E)_W\,.
\end{equation}
\begin{lemma}\label{L:siEE}
  The class $s_iE - E\in Z^0(\Omega_W(K))_W$ is trivial for every
  $i\in \{0,\ldots,g\}$.
\end{lemma}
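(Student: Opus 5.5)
The plan is to compute $s_iE-E$ directly inside $Z^0(\Omega_W(K))_W$, splitting it as $(s_iD-D)+(s_i\iota(w')-\iota(w'))$ and showing the two pieces cancel. I read the sum in $D$ as running over the $2g+2$ points of $\T$, i.e.\ $\sum_{j=0}^{g}(a_j+b_j)$, and $w'$ as $w_1\cdots w_g$, the product of the generators \eqref{wi}.

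First, $s_i$ acts on the coinvariants: $W$ has index $2$ in $\Gamma$, so it is normal and $s_i$ normalizes it. Hence $s_i(wD-D)=(s_iws_i)(s_iD)-s_iD$ is again trivial in the coinvariants. I will use two elementary rules. For $w\in W$ and any $x\in\Omega$,
\[
[wx]=[x]-\iota(w),
\]
since $\iota(w)=x-wx$ independently of the base point. Also $\iota$ is a homomorphism, so $\iota(s_js_k)=\iota(w_{j+1})-\iota(w_{k+1})$, writing $w_{g+1}\colonequals 1$. This follows from $s_js_k=(s_js_g)(s_ks_g)^{-1}$.

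\textbf{Step 1: $s_iD-D$.} Since $s_ja_j=a_j$, we get $s_ia_j=(s_is_j)a_j$ with $s_is_j\in W$, so $[s_ia_j]=[a_j]-\iota(s_is_j)$, and likewise for $b_j$. For the poles, $s_ib=(s_is_g)(s_gb)$ and $s_is_gb=(s_is_g)b$, so the pair $\{b,s_gb\}$ is permuted and shifted by $-\iota(s_is_g)=-\iota(w_{i+1})$ each. Collecting terms gives
\[
s_iD-D=-2\sum_{j=0}^{g}\iota(s_is_j)+2(g+1)\iota(w_{i+1}).
\]
Using $\sum_j\iota(s_is_j)=(g+1)\iota(w_{i+1})-\sum_{j}\iota(w_{j+1})=(g+1)\iota(w_{i+1})-\iota(w')$, this simplifies to $s_iD-D=2\iota(w')$.

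\textbf{Step 2: $s_i\iota(w')$.} For $w\in W$, $s_i\iota(w)=s_iz_0-s_iwz_0=\iota(s_iws_i)$, taking the base point $s_iz_0$. Now $s_iw_{k+1}s_i=(s_is_g)(s_gs_k)(s_gs_i)$, so
\[
\iota(s_iw_{k+1}s_i)=\iota(w_{i+1})-\iota(w_{k+1})-\iota(w_{i+1})=-\iota(w_{k+1}).
\]
Thus $s_i$ acts as $-1$ on $\iota(W)$, and $s_i\iota(w')-\iota(w')=-2\iota(w')$. Adding this to Step 1 gives $s_iE-E=0$.

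The main obstacle is bookkeeping rather than ideas: getting the signs of $\iota$ right, since $[wx]=[x]-\iota(w)$ and not $+$. Two degenerate situations also need care. One is the case $i=g$, where $w_{g+1}=1$ and $s_gb$ coincides with a point of the $b$-orbit. The other is when $a,b$ or $\infty$ lie in $\T$. In both the formal computation is unchanged, because it only uses the $\Gamma$-action on $0$-cycles and the homomorphism $\iota$. Should the intended definition of $w'$ differ, for instance ordered differently, only its image in $W^{\ab}$ matters, so the argument is unaffected.
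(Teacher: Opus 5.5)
Your proof is correct, but it is organized differently from the paper's.

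\textbf{The paper's route.} It uses $\iota(w_{j+1})=[s_gb_j-b_j]$ (base point $z_0=s_gb_j$) to absorb the correction into a single cycle, $E=[\sum_{j=0}^{g}(a_j+s_gb_j)-(g+1)b-(g+1)s_gb]$. It then applies $s_i=w_{i+1}s_g$ to each degree-$0$ chunk $a_j+s_gb_j-b-s_gb$. The translation by $w_{i+1}$ drops out, and $s_iE-E=\sum_j[w_{j+1}^{-1}(a_j-b_j)-(a_j-b_j)]$ vanishes term by term.

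\textbf{Your route.} You keep $[D]$ and $\iota(w')$ separate and prove two structural facts instead. First, $s_iD-D=2\iota(w')$ for every $i$. Second, each $s_i$ acts as $-1$ on $\iota(W)$, which is the counterpart of the hyperelliptic involution acting as $-1$ on the Jacobian.

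\textbf{What each buys.} The paper's route needs nothing beyond the defining relations of the coinvariants. Yours shows that $[D]+x$ with $x\in\iota(W)$ is $s_i$-invariant exactly when $2x=2\iota(w')$. This explains why a correction term is needed at all and why its sign matters.

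Your Step~2 also exposes two compensating sign slips in the paper's chain $\iota(w_{j+1})=[w_{j+1}b_j-b_j]=\cdots=[s_js_gb_j-s_jb_j]=[s_gb_j-b_j]$. The first class equals $-\iota(w_{j+1})$, and $s_j$ negates $[s_gb_j-b_j]$ rather than fixing it. The resulting identity is nonetheless correct.

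A minor point: a single point has no class in $Z^0(\Omega_W(K))_W$, so your rule should be stated as $[wx-x]=-\iota(w)$. You only ever use it inside degree-$0$ combinations, so nothing changes.
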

\begin{proof}
  We first rewrite $E$ in a convenient way.
  Recall from~\eqref{wi} that for $j=0,\ldots,g-1$, we have $w_{j+1} =s_{j}s_g$ and that $s_j$ fixes $a_j$ and $b_j$ for all $j$.
In particular, we have
$$
  \iota(w_{j+1}) =  [w_{j+1}b_j - b_j] = [s_js_gb_j - b_j] = [s_js_gb_j - s_jb_j] =
  [s_gb_j-b_j]\,.
$$
Since $\iota$ is a homomorphism, we get
$$
  \iota(w') = \sum^{g}_{j=0} [s_gb_j-b_j]\,,
$$
implying
  \[
    E 
    = [a_0+s_gb_0 +\ldots+a_g+s_gb_g - (g+1)b - (g+1)s_gb]\,.
  \]
  We have
  \begin{align*}
    s_i(a_j+s_gb_j - b - s_gb) &= s_is_ja_j+s_is_gb_j - s_ib - s_is_gb \\
    &= s_is_gs_gs_ja_j+s_is_gb_j -s_is_gs_gb - s_is_gb =
    w_{i+1}w_{j+1}^{-1}a_j+w_{i+1}b_j - w_{i+1}s_gb-w_{i+1}b\,,
  \end{align*}
  for all $j\in \{0,\ldots,g\}$, so that 
  $$[s_i(a_j+s_gb_j - b-s_gb)]=[w_{j+1}^{-1}a_j+b_j-s_gb-b]\,.$$
  Hence we conclude that
  \[s_iE - E = \sum^{g}_{j=0}[w_{j+1}^{-1}a_j - a_j + b_j - w_{j+1}^{-1}b_j] = 0\,. \qedhere\] 
\end{proof}
 
\begin{lemma}\label{L:H}
There  is a homomorphism 
 $\kappa\colon \Gamma \to \{\pm 1\}$ such that $H(\gamma
  z)\kappa(\gamma)=H(z)$ holds for every $\gamma \in \Gamma$. 
\end{lemma}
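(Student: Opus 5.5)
The plan is to use the invariance properties of the theta pairing, together with Lemma~\ref{L:siEE}, to show that each generator $s_i$ transforms $H$ by a constant, and then to show that this constant is $\pm1$ and that the assignment is multiplicative.

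First we handle the generators. For $\gamma=s_i$ we have $s_i\in\Gamma$, and $\Gamma$ normalizes $W$ because $W$ has index $2$. By Lemma~\ref{L:thetaconj}, $(s_iD',s_iE')_W=(D',E')_W$ for all $D',E'$. Working with classes in $Z^0(\Omega_W(K))_W$ and the extended pairing, we compute
\[
H(s_iz)=(s_iz-\infty,E)_W=(z-s_i\infty,s_iE)_W=(z-s_i\infty,E)_W,
\]
where the last equality uses Lemma~\ref{L:siEE}. By bilinearity this equals $(z-\infty,E)_W\cdot(\infty-s_i\infty,E)_W=H(z)\,c_i$ with $c_i=(\infty-s_i\infty,E)_W$ a constant, possibly after moving $\infty$ slightly if some theta pairing is degenerate there. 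Thus $H(s_iz)=c_i^{-1}\cdots$; more precisely, $H(s_i z)=c_iH(z)$ for a nonzero constant $c_i\in K^\times$.

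Next we show that $c_i^2=1$. Since $s_i^2=1$, applying the relation twice gives $H(z)=H(s_i^2z)=c_i^2H(z)$. Because $H$ is not identically zero (its divisor is the $W$-orbit of $D$), it follows that $c_i=\pm1$.

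Then we extend to all of $\Gamma$. Define $\kappa(\gamma)$ as the unique constant with $H(\gamma z)\kappa(\gamma)=H(z)$. This constant exists for every $\gamma$ by induction on word length in the $s_i$, since $H(\gamma\delta z)=\kappa(\gamma)^{-1}H(\delta z)=\kappa(\gamma)^{-1}\kappa(\delta)^{-1}H(z)$. It is well defined because $H\neq0$. The same computation shows that $\kappa$ is a homomorphism; because $\pm1$ is commutative, the order of the factors in the product does not matter. Its values lie in $\{\pm1\}$ since $\kappa(s_i)=c_i^{-1}=\pm1$ and the $s_i$ generate $\Gamma$.

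The main obstacle is the first step: justifying the manipulation $H(s_iz)=(z-s_i\infty,s_iE)_W$ rigorously. There are two issues. One is that $\infty$ may fail to lie in $\Omega$, or may meet the support of $E$; there I would invoke Remark~\ref{R:WeilRec} or a continuity or auxiliary-point argument. The other is that the pairing must be used on coinvariant classes, so that Lemma~\ref{L:siEE} applies to the second argument. For this I would use the fact that $(D',\cdot)_W$ depends only on the class of the second argument in $Z^0(\Omega_W(K))_W$.
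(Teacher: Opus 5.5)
Your proof is correct and follows essentially the same route as the paper: both use Lemma~\ref{L:thetaconj} (since $s_i$ normalizes $W$), then Lemma~\ref{L:siEE} to replace $s_iE$ by $E$, and then split off the constant factor $(s_i\infty-\infty,E)_W^{\pm1}$. The only difference is bookkeeping. The paper defines $\kappa(\gamma)=(\gamma\infty-\infty,E)_W$ for all $\gamma$ and checks multiplicativity via the cocycle identity. You instead define $\kappa$ as the ratio $H(z)/H(\gamma z)$ and derive $\kappa(s_i)=\pm1$ explicitly from $s_i^2=1$, a step the paper leaves implicit.
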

 
\begin{proof}
  For $\gamma \in \Gamma $, define 
  $$\kappa(\gamma)\colonequals (\gamma \infty - \infty, E)_W\,.$$
  Then 
  \begin{equation}\label{}
    H(\gamma z) = (\gamma z - \gamma\infty,E)_W\cdot(\gamma\infty - \infty, E)_W =
    (\gamma z - \gamma\infty,E)_W \cdot\kappa(\gamma)\,.    
  \end{equation}
  and $\kappa$ is a homomorphism, since for $\gamma,\delta\in \Gamma$, we have
  \[
    \kappa(\gamma\delta) =(\gamma\delta \infty -\infty, E)_W =(\gamma\delta \infty -\delta\infty, E)_W\cdot (\delta \infty -\infty, E)_W
    = \kappa(\gamma)\kappa(\delta)\,.
  \]

  For $w\in W$, we obtain $(wz - w\infty, E)_W = (z-\infty, E)_W$ by
  definition of $(\cdot,\cdot)_W$, and hence $H(wz) = H(z)\kappa(w)$.
  But if $i\in \{0,\ldots,g\}$, then we have
  \begin{equation*}
    H(s_i z) = (s_iz - s_i\infty,E)_W\cdot(s_i\infty - \infty, E)_W =
    (z-\infty, s_i E)_W\cdot\kappa(s_i) 
  \end{equation*}
  by Lemma~\ref{L:thetaconj}, since $s_i$ normalizes $W$.
   Lemma~\ref{L:siEE}    implies that 
   $$
    (z-\infty, s_iE)_W = (z-\infty, E)_W = H(z)\,,
   $$
   and therefore $H(s_iz) = H(s_i)\kappa(s_i)$.
\end{proof}
 
We combine the results of this subsection into the following uniformization
theorem.
\begin{thm}\cite[Proposition~6.2]{vdPT26}\label{T:H} \hfill
  \begin{enumerate}
    \item\label{Winv}  The function
 $H$
 is $W$-invariant. 
 \item\label{sinoninv} We have $H(sz) = -H(z)$ for all $s\in \Gamma\setminus W$.
 \item\label{HFeqn}
Let $H$ also denote  the induced element in the function field of 
      $\Omega /W$. Then there is a constant $c\in K^\times$ such that  
      \begin{equation*}
        H^2=c\prod \big(x-F(a_i)\big)\big(x-F(b_i)\big)\;\;\text{where}\;
        x-\infty\colonequals 1\,.
      \end{equation*}
    \item\label{unif}
Over $\bar{K}$ the hyperelliptic curve $C\colonequals C_W$  is uniformized by 
      \begin{equation}\label{}
  u\colon \Omega\to  C\,;\quad  z \mapsto \big(F(z),c^{-1/2}H(z)\big)\,. 
      \end{equation}
  \end{enumerate} 
\end{thm}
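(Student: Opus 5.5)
The plan is to derive everything from Lemma~\ref{L:H} together with Lemma~\ref{L:ytilde}. By Lemma~\ref{L:H} there is a homomorphism $\kappa\colon\Gamma\to\{\pm1\}$ with $H(\gamma z)\kappa(\gamma)=H(z)$. The first step is to pin down $\kappa$.

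\emph{Step 1: $\kappa(s_i)=-1$ for every $i$.}
Since $\Gamma$ is generated by involutions, $\kappa$ is determined by the values $\kappa(s_i)$. I would compute $\kappa(s_i)=(s_i\infty-\infty,E)_W$ directly. The tool is the same rewriting of $E$ as $[\sum_j(a_j+s_gb_j)-(g+1)b-(g+1)s_gb]$ used in the proof of Lemma~\ref{L:siEE}.

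An alternative is divisor bookkeeping. Evaluating $H(s_iz)=\kappa(s_i)H(z)$ near the fixed point $a_i$ forces $\kappa(s_i)=-1$. To see this, $H$ has a zero at $a_i$ whose order should be odd (exactly one), since $a_i$ appears once in $D$ and $W$-orbits of distinct $a_j,b_j$ do not collide. An involution fixing $a_i$ acts on a local parameter $t$ centered at $a_i$ by $t\mapsto -t+O(t^2)$. Hence a function with a simple zero at $a_i$ satisfying $H\circ s_i=\pm H$ must take the sign $-1$.

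This local argument is cleaner and is the one I would use. The main care needed is in the degenerate cases the paper allows, namely $a,b$ or $\infty$ among the $a_j,b_j$. If $b$ or $s_gb$ coincides with some $a_i$, the order could change. In that case I would use $b_i$ instead, or note that the total order at the fixed point is still odd because $g+1$ plus $1$ adjusted stays odd. Failing that, I would vary $b$ by continuity, as in the proof of Lemma~\ref{L:FGamma}.

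\emph{Step 2: parts (1) and (2).}
Once $\kappa(s_i)=-1$ for all $i$, $\kappa$ coincides with the map $\varphi\colon\Gamma\to\Z/2\Z$ defining $W$. So $\kappa|_W=1$, which gives (1), and $\kappa(s)=-1$ for $s\in\Gamma\setminus W$, which gives (2).

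\emph{Step 3: part (3).}
By (1), $H$ descends to a meromorphic function on $\Omega/W\simeq C^{\an}$, hence by GAGA to an element of $K(C)$. Its divisor is the $W$-orbit of $D$, since theta functions $(z-\infty,E)_W$ have divisor the orbit of $E$, and $E$ differs from $D$ only by $\iota(w')$, which is divisorially trivial in coinvariants. So $H$ has the properties in Lemma~\ref{L:ytilde}, and $H=\lambda\tilde y$ for some $\lambda\in K^\times$. By Proposition~\ref{P:hyp_eqn}, $y^2=c'\prod(x-F(a_i))(x-F(b_i))$, with $x=F$. This gives (3) with $c=\lambda^2c'$.

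\emph{Step 4: part (4).}
Set $Y=c^{-1/2}H$. Then $(F,Y)$ satisfies the equation of Proposition~\ref{P:hyp_eqn} (normalized with leading constant $1$) over $\bar K$. Moreover $K(C)=K(x,y)=K(F,H)$, and the map factors through the isomorphism $\tilde u\colon\Omega/W\simeq C^{\an}$ of the diagram. So $u$ is the uniformization.

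The main obstacle is Step~1 in the degenerate position cases. The orbit-multiplicity check there, that $a_i$ is a zero of odd order, is where I expect the real work to lie.
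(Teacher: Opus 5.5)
Your proof is correct, but it is organized differently from the paper's. The paper never determines $\kappa$. It sets $m=H/\tilde y$ and notes that $m$ has no zeros or poles, since $\div H=\div\tilde y=W\cdot D$. It then observes that $m^2$ is $\Gamma$-invariant for \emph{any} sign character $\kappa$, because $\kappa^2=1$ and $\tilde y^2$ is $\Gamma$-invariant. Hence $m^2$ descends to a holomorphic function on $\Omega/\Gamma\simeq\BP^{1,\an}$ and is constant, and connectedness of $\Omega$ then gives that $m$ itself is constant. Parts (1) and (2) are read off from the transformation behaviour of $\tilde y$ in Lemma~\ref{L:ytilde}, so $\kappa=\varphi$ is only a by-product.

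You instead pin down $\kappa=\varphi$ first by local parity. In a M\"obius coordinate $t$ with $a_i\mapsto 0$, $b_i\mapsto\infty$ one has $t\circ s_i=-t$, so Lemma~\ref{L:H} forces $\kappa(s_i)=(-1)^{\ord_{a_i}H}$. This proves (1) and (2) from the divisor of $H$ alone, without using $\tilde y$. You then compare $H$ with $\tilde y$ on $C$ rather than on $\BP^1$, which avoids the square-root and connectedness step.

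The price is the orbit bookkeeping needed to certify odd order. You need that $W$ acts freely on $\Omega$ and that the $\Gamma$-orbits of the $2g+2$ fixed points are pairwise distinct, equivalently that the branch points $F(a_j),F(b_j)$ are distinct. You also need a clean degenerate case, and your phrase ``$g+1$ plus $1$ adjusted'' should be replaced by the actual reason. Since $\Gamma=W\cup Ws_i$ and $s_i$ fixes $a_i$, one has $\Gamma a_i=Wa_i$. Since also $s_gb\in\Gamma b$, either both or neither of $b,s_gb$ lie in $Wa_i$. Hence $\ord_{a_i}H\in\{1,\,1-2(g+1)\}$, which is odd in either case. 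Equivalently, $b$ and $s_gb$ meet at most one fixed-point orbit, so one of $a_i,b_i$ is always a simple zero of $H$. The paper's squaring trick sidesteps all of this and needs only $\div H=\div\tilde y$ and $\kappa^2=1$.
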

\begin{proof} 
See the proof of \cite[Proposition~6.2]{vdPT26}, which we briefly recall.
  Let $\tilde{y}$ be the lift of $y$ to $\Omega$, as in
  Lemma~\ref{L:ytilde}. The function $m\colon \Omega\to \C_p$ defined by
  $m(z)\colonequals\frac{H(z)}{\tilde{y}(z)}$ has no zeros or poles by
construction.
  Hence the function $z\mapsto m(z)^2$ does not, either, and it is
  $\Gamma$-invariant by Lemma~\ref{L:H}.
    Thus $m(z)^2$ is constant. 
  Since $m$ is holomorphic on $\Omega$
  and $\Omega$ is connected, $m(z)\equalscolon c\in K^\times$ itself is constant, and the
  proposition follows from Lemma~\ref{L:ytilde} and Proposition~\ref{P:hyp_eqn}. 
  \end{proof}
  \begin{rk}\label{R:}\hfill
    \begin{enumerate}
      \item 
    Theorem~\ref{T:H} was proved in~\cite[\S6]{vdPT26} under the
    assumption that $\infty$ is not in the $\Gamma$-orbit of any fixed
    point.
  \item 
    We can compute $c$ by evaluating $F$ and $H$ at a suitable point $z\in\Omega$.
  \item According to~\cite[Observation~6.3]{vdPT26}, the constant $c$ is always a square in
    $K$ when $g=2$, and for various configurations of branch and
    fixed points in genus~3; moreover, the authors expect $c$ to always be
        a square. In this case Theorem~\ref{T:H} (\ref{unif}) yields a univormization of $C$ over
        $K$. This was the case in all our examples.
    \end{enumerate}
  \end{rk}
   \begin{cor}\label{C:}
    Let $L/K$ be a complete field such that $c=s^2$ for some $s
    \in L$. Then $(F,s^{-1}H)$ induces a
    bijection
    $u\colon \Omega(L)/W\rightarrow C(L)$.
  \end{cor}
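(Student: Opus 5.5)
The corollary follows by combining Corollary~\ref{C:Mumext} with Theorem~\ref{T:H}. The plan is to show that the map $z\mapsto (F(z),s^{-1}H(z))$ on $\Omega(L)$ is exactly the $L$-points of the analytic covering $\Omega\to\Omega/W\simeq C^{\an}$, expressed in the coordinates $(x,y)$ of the model~\eqref{Ceqn}. The bijection then comes from Corollary~\ref{C:Mumext}.

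\textbf{Landing in $C(L)$.} By Theorem~\ref{T:H}~\eqref{HFeqn}, the pair $(F(z),s^{-1}H(z))$ satisfies $(s^{-1}H)^2=\prod(x-F(a_i))(x-F(b_i))$. So it is a point of the model of $C$ given with leading constant $1$. Since $c=s^2$ with $s\in L$, this model is isomorphic over $L$ to~\eqref{Ceqn} via $Y\mapsto sY$.

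\textbf{Rationality.} $F$ and $H$ are theta functions attached to divisors rational over $K$. Their defining products converge in the complete field $L$, so $z\in\Omega(L)$ gives values in $L\cup\{\infty\}$. The poles are handled in the weighted projective plane: at points of the $\Gamma$-orbit of $b$, $F$ has a pole, and we use the point at infinity, i.e.\ the weighted coordinates $(F:s^{-1}H:1)$ rescaled.

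\textbf{Identifying the map with $u$.} By Theorem~\ref{T:H}~\eqref{Winv}, both $F$ and $H$ are $W$-invariant, so the map factors through $\Omega(L)/W$. By Theorem~\ref{T:H}~\eqref{unif} and Lemma~\ref{L:ytilde}, the function $s^{-1}H$ is the lift $\tilde y$ of $y\in L(C)$, up to the sign choice of $s$, and $F$ is the lift of $x$. Hence the map is the composite $\Omega\to\Omega/W\xrightarrow{\tilde u}C^{\an}$ read in the coordinates $(x,y)$.

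\textbf{Conclusion and main obstacle.} Corollary~\ref{C:Mumext}, applied to the complete extension $L/K$, then gives the bijection $\Omega(L)/W\to C(L)$. The only delicate point is checking that $x=F$ and $y=s^{-1}H$ generate $L(C)$ compatibly with $\tilde u$, so that the coordinate map really is $\tilde u$ on points and not merely some map with the same image. This holds because $K(C)=K(x,y)$ and the lifts are unique by Lemma~\ref{L:ytilde}. One must also treat points over the branch points and at infinity separately. There, injectivity modulo $W$ follows because $\tilde u$ is an isomorphism, so no fiber-by-fiber argument with $s_i$ is needed.
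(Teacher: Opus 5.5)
Your proposal is correct and is essentially the paper's own argument: the paper proves this corollary in one line, saying it follows from Theorem~\ref{T:H} together with Corollary~\ref{C:Mumext}, and you spell out that combination by identifying $(F,s^{-1}H)$ with the covering $\Omega\to\Omega/W\simeq C^{\an}$ written in $L$-coordinates and then using the bijection on $L$-points. One small normalization to keep straight: the constant in \eqref{Ceqn} is only determined up to $(K^\times)^2$, so you should take it to be the constant $c$ of Theorem~\ref{T:H}~\eqref{HFeqn} for your isomorphism $Y\mapsto sY$ to hold literally (otherwise rescale by the constant $H/\tilde y\in K^\times$).
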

  \begin{proof}
    This follows from Theorem~\ref{T:H} and Corollary~\ref{C:Mumext}.
  \end{proof}
\subsection{Lifting points on hyperelliptic Mumford
curves}\label{S:lifting}
Given a point $P=(x,y)\in C(K)$, where $C/K$ is a hyperelliptic Mumford
curve uniformized by a Schottky group $W$, we want to compute a
\textit{lift} $z\in \Omega(K)=\Omega_W(K)$ such that $u(z)=P$.
We assume that we already know suitable generators $s_0,\ldots,s_g$ of $\Gamma$
(and hence generators $w_1,\ldots,w_g$ of $W$); their computation will be
discussed in \S\ref{S:FixedFromRam} below.
\begin{cor}\label{C:unif}
  Let $z\in \Omega(K)$ such that $F(z) = x$. If $H(z)=y$, then $u(z)=P$.
  Otherwise $u(s(z))=P$ where $s\in \Gamma$ is any matrix of order~2.
\end{cor}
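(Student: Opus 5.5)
Throughout I take $u=(F,s^{-1}H)$ with $c=s^2$, $s\in K$, as in the corollary preceding Corollary~\ref{C:unif}; the condition ``$H(z)=y$'' is read in this normalization, i.e.\ as $s^{-1}H(z)=y$.

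The plan is to use only the transformation behaviour of $F$ and $H$ under $\Gamma$ together with the curve equation of Theorem~\ref{T:H}. First, if $H(z)=y$, then $u(z)=(F(z),s^{-1}H(z))=(x,y)=P$ by definition of $u$, so there is nothing to prove.

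Next suppose $H(z)\neq y$. By Theorem~\ref{T:H}(\ref{HFeqn}), $u(z)=(x,s^{-1}H(z))$ lies on $C$. The point $P=(x,y)$ also lies on $C$. Both points have the same $x$-coordinate, so the equation $Y^2=c\prod(X-F(a_i))(X-F(b_i))$ forces $(s^{-1}H(z))^2=y^2$. Since $s^{-1}H(z)\neq y$, we conclude $s^{-1}H(z)=-y$, with $y\neq 0$. (If $x$ is a branch point, then $y=0=H(z)$ and we are in the first case.)

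Now let $s'\in\Gamma$ have order $2$. The key step is to show $s'\notin W$. The group $W$ is Schottky, hence torsion-free by Definition~\ref{D:Schottky}. So $W$ contains no element of order $2$, and therefore $s'\in\Gamma\setminus W$. Then $F(s'z)=F(z)=x$ by Lemma~\ref{L:FGamma}, and $H(s'z)=-H(z)$ by Theorem~\ref{T:H}(\ref{sinoninv}). This gives $u(s'z)=(x,-s^{-1}H(z))=(x,y)=P$. We also need $s'z\in\Omega(K)$; this holds because $\Omega=\Omega_\Gamma$ is $\Gamma$-stable and $s'\in\PGL_2(K)$.

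The only point needing care is the case where $x$ or $z$ sits at a pole, e.g.\ $x=\infty$ with the convention $x-\infty=1$. I would handle this by working in the weighted projective model, where the same sign argument applies to the $Y$-coordinate.
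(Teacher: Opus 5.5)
Your proposal is correct and is essentially the paper's argument. The paper just says the corollary is immediate from Theorem~\ref{T:H}, and you spell out the implicit steps: $F$ is $\Gamma$-invariant; an order-$2$ element lies in $\Gamma\setminus W$ because $W$ is torsion-free, so $H$ changes sign; and two points of $C$ with the same $x$-coordinate have $y$-coordinates differing by a sign. Renaming the matrix to $s'$, to avoid the clash with the square root $s$ of $c$, is also a sensible clarification.
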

\begin{proof}
  This is immediate from Theorem~\ref{T:H}.
\end{proof}
Corollary~\ref{C:unif} suggests the following method to compute $z\in
\Omega(K)$ such that $u(z)=P$:

\begin{algorithm}[H] \label{A:LiftPoints}
	\caption{\bf Rigid uniformization of a point on a hyperelliptic Mumford
  curve}
  \KwIn{An affine point $P=(x,y) \in C(K)$, where $C/K$ is a hyperelliptic
  Mumford curve uniformized via $u=(F,s^{-1}H)\colon \Omega/W\to C$ over $K$}
  \KwOut{A point $z\in \Omega_W(K)$ such that $u(z) = P$. }
\begin{enumerate}
  \item Compute $z\in \Omega(K)$ such that $F(z) = x$ using Newton
    iteration.
  \item If $y=0$, then return $z$
  \item If $y\ne 0$, compute $H(z) \pmod {\pi^{v(y)+1}}$. If this equals $y
    \pmod{\pi^{v(y)+1}}$, return $z$, else return $s_0z$.
\end{enumerate}
\end{algorithm}

In practice, this approach requires algorithms to compute the theta pairing $(\cdot, \cdot)_W$; see~\S\ref{subsec:naive}
and~\S\ref{subsec:iterative}. 
We can in principle compute $H(z)$ to any desired precision, but we only
compute enough digits to test whether $y=H(z)$ or $y=H(s_0z)$.
\subsubsection{Computing derivatives of
$\theta$-functions}\label{subsec:logderivs}
Newton iteration requires the computation of derivatives of theta
functions. This can be done by differentiating the infinite product
defining the theta function or, better yet, by using the fact that $F' = F \dlog F$.
We can compute $\dlog F$ both via the naive and the iterative methods. For example,
the iterative method yields a representation of $F$ essentially as a product of a rational
function times several power series (see \cite[\S 4.3]{MX25}) (with different uniformizing parameters), so
to compute $\dlog F$ from this representation
one differentiates the formal logarithm of each of the power
series, and then adds the results to the logarithmic derivative of the rational function.


  \subsection{Position of fixed points and branch points}\label{}
Recall that in the present section, we have assumed that the set
$\T=\{a_0,b_0,\dots, a_g,b_g\}\subset \BP^1(K)$ is in good position (see
Definition~\ref{elts:gens_in_good_pos}). We now discuss this in more
detail.

\begin{cor}\label{cor:KadzielaShape}
  Let $W$ be a Whittaker group arising from the fixed points $\T$ and 
	let $C=C_W$ be the corresponding hyperelliptic Mumford curve with branch points
  $\B=\B_C$. 
  Then
  there exist $\sigma_{\B},\sigma_\T\in \PGL_2(K)$ such that
	\begin{itemize}
		\item 
      $\sigma_{\B}(\B) = \{0,r_0,\dots,r_{2g-2},1,\infty\}$
		with $0 < |r_0| < |r_1| \leq \cdots \leq |r_{2g-2}| < 1$, and 
		\item
		$\sigma_\T(\T) = \{0,b_0,\dots,a_i,b_i,\dots,a_{g-1},b_{g-1},1,\infty\}$
		with $0 < |b_0| < |a_1| \leq \cdots \leq |b_{g-1}| < 1$.
	\end{itemize}
\end{cor}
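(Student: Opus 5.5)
The plan is to derive both statements from Lemma~\ref{L:niceRos} (Kadziela's Proposition~5.2), using the necessary condition of Lemma~\ref{L:necescond} as input. This is exactly how Corollary~\ref{C:goodKad} was obtained.

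\emph{The fixed points $\T$.} By the standing assumption of this section, $\T$ is in good position. So Corollary~\ref{C:goodKad} directly provides $\sigma_\T\in\PGL_2(K)$ such that $\sigma_\T(\T)$ is in Kadziela position. Unwinding Definition~\ref{D:KadzPos} gives exactly the second bullet point. The labelling inside each pair $\{a_i,b_i\}$ may have to be swapped; this is harmless because $s_i$ depends only on the unordered pair.

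\emph{The branch points $\B$.} We must check that $\B=\B_C\subset\BP^1(K)$, which has $2g+2$ elements, satisfies the hypothesis of Lemma~\ref{L:niceRos}: every line in $\Red_\B(\BP^1(K))$ separates at most two points. Once this is known, Lemma~\ref{L:niceRos} gives $\sigma_\B$ with $\sigma_\B(\B)$ in Kadziela position. We then rename the finite points other than $0$ and $1$ as $r_0,\dots,r_{2g-2}$, ordered by absolute value. The strict inequality $|r_0|<|r_1|$ is the condition $|b_0|<|a_1|$ from Definition~\ref{D:KadzPos}. The fact that all $|r_i|<1$ is the condition $|b_{g-1}|<1$.

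To verify the separation hypothesis, I will use that $C$ has split degenerate reduction. The stable reduction of a hyperelliptic curve $y^2=f(x)$ with $p\ne 2$ is governed by the cluster picture of the roots of $f$, i.e.\ by the tree $T(\B)$. Suppose some component $L$ of $\Red_\B$ separated three or more branch points. Then the double cover of $L$ in the reduction would be ramified at three or more points of $L$. If at least three (plus possibly nodes of odd parity) lie over $L$, the preimage of $L$ has positive genus, or it is a $\BP^1$ whose configuration forces a non-split or non-rational component. This contradicts split degenerate reduction, whose components all normalize to $\BP^1_k$.

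An alternative, more intrinsic route avoids cluster pictures. We have $\B=F(\T)$, and by Corollary~\ref{L:P1} the map $\tilde F\colon\Omega/\Gamma\simeq\BP^{1,\an}$ is an isomorphism. Under it, the reduction tree of $\B$ should correspond to the quotient of the Bruhat--Tits subtree of $\Gamma$ by $\Gamma$, with the points of $\T$ mapping to the end vertices. One could then transport Lemma~\ref{L:necescond} through $F$.

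The main obstacle is this verification for $\B$. The fixed-point half is immediate from Corollary~\ref{C:goodKad}. For the branch-point half I would first try the cluster-picture argument. The key point to prove is that on a component $L$ where three or more points of $\B$ reduce to distinct smooth points, the genus contribution $\lfloor(\#\text{ramification}-1)/2\rfloor$ must be $0$, which forces at most two branch points to be separated. I would also cite \cite[\S7]{vdPT26} for the correspondence between $T(\B)$ and $T(\T)$ if that correspondence is stated there.
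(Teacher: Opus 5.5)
Your treatment of $\T$ is exactly the paper's argument: Lemma~\ref{L:necescond} feeds Lemma~\ref{L:niceRos}, which is what Corollary~\ref{C:goodKad} packages.

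For $\B$ you take a genuinely different route. The paper simply quotes the second statement of \cite[Theorem~V.3.1]{GP80} to get that every line of $\Red_\B(\BP^1(K))$ separates at most two points. You instead derive this from the semistable reduction of the double cover $C\to\BP^1$, and that derivation is sound. After a finite extension of $K$, the normalization in $K(C)$ of the marked stable model of $(\BP^1,\B)$ is semistable; this is tame since $p\neq 2$. Over a line $L$ it is a double cover branched exactly at the odd blocks of the partition $\BP_s$ attached to $L$. Since $C$ has purely toric reduction, with abelian rank $0$ unchanged by base extension, every component of a semistable model has genus $0$.

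Your route gives a self-contained explanation of \emph{why} the hypothesis of Lemma~\ref{L:niceRos} holds, intrinsically in terms of $C$. The price is the standard but nontrivial description of semistable models of hyperelliptic curves via their branch tree, where the paper needs one citation.

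Two points to tighten.
\begin{enumerate}
\item Replace the vague alternative ``or it is a $\BP^1$ whose configuration forces a non-split or non-rational component'' by a parity argument. The blocks of $\BP_s$ partition a set of even size $2g+2$, so the number of odd blocks at $L$ is even. Three singletons therefore force at least four branch points of the cover of $L$, hence genus $\ge 1$ by Riemann--Hurwitz.
\item Drop the ``intrinsic'' route through $F$. Lemma~\ref{L:necescond} concerns $\T$ with its pairing, and relating the configuration of $\B=F(\T)$ to that of $\T$ is precisely the delicate issue; in Example~\ref{E:KadBad} their positions differ. That route would need a real argument and is not a shortcut.
\end{enumerate}
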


\begin{proof}
  It suffices by Lemma~\ref{L:niceRos} to check that for $S\in\{\B,\T\}$,
  every line in $  \Red_S(\mathbb{P}^1_K)$ separates at most two points.
  When $S = \T$, this
  follows from Lemma~\ref{L:necescond}. When $S = \B$, this follows from the
  second statement in \cite[Theorem~V.3.1]{GP80} 
\end{proof}

\begin{lemma}\label{L:}
  Suppose that $T$ is in strong Kadziela position. Then
  $\BP^1(K)\setminus (\calB_0\cup \calC_0\cdots\cup \calB_g\cup \calC_g)$ is a good fundamental
  domain for $W$, where $\calC_i=s_g(\calB_i)$ is the disk with center $2-c_i$ 
  and radius $r_i$ (see~\eqref{rici}). 
\end{lemma}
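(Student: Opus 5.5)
The plan is to verify the three conditions of Definition~\ref{D:GoodFundDom} directly. We take as good generators the Whittaker generators $w_{i+1}=s_is_g$ for $i=0,\dots,g-1$ from~\eqref{wi}, and set $B_{i+1}\colonequals\calB_i$ and $B_{-(i+1)}\colonequals\calC_i$. We read the index range in the statement as $i=0,\dots,g-1$, which gives the $2g$ balls that a rank~$g$ Schottky group requires; the pair $\{1,\infty\}$ contributes only the involution $s_g$ and no ball. By Proposition~\ref{P:W} the $w_{i+1}$ freely generate $W$, so it suffices to check properties~\eqref{Good1}--\eqref{Good3}.

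\emph{Explicit form of the involutions.} From the matrices given after Corollary~\ref{C:goodKad}, $s_g(z)=2-z$. This is an isometry of $K$, so $\calC_i=s_g(\calB_i)$ is the disk with center $2-c_i$ and radius $|r_i|$. With $c_i,r_i$ as in~\eqref{rici}, we have $a_i=c_i+r_i$ and $b_i=c_i-r_i$, and for $i=0$ we have $a_0=0$ and $b_0=2c_0$. We will check that
\[
  s_i(z)=c_i+\frac{r_i^2}{z-c_i}\,,\qquad i=0,\dots,g-1 .
\]
This map is an involution with fixed points $c_i\pm r_i$, so it equals $s_i$ by uniqueness. For $i=0$ one sees directly that $c_0+c_0^2/(z-c_0)=b_0z/(2z-b_0)$, which matches the matrix for $s_0$. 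The formula gives $|s_i(z)-c_i|\,|z-c_i|=|r_i|^2$. Hence $s_i$ maps $\{|z-c_i|\ge |r_i|\}$ onto $\{|z-c_i|\le|r_i|\}$ and maps $\{|z-c_i|>|r_i|\}$ onto $\{|z-c_i|<|r_i|\}$, with $\infty\mapsto c_i$. Composing with $s_g$, which maps $\BP^1\setminus\calC_i$ onto $\BP^1\setminus\calB_i$ and $\BP^1\setminus\calC_i^+$ onto $\BP^1\setminus\calB_i^+$, gives $w_{i+1}(\BP^1\setminus B_{-(i+1)})=B_{i+1}^+$ and $w_{i+1}(\BP^1\setminus B_{-(i+1)}^+)=B_{i+1}$. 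This is~\eqref{Good3}.

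\emph{Conditions \eqref{Good1} and \eqref{Good2}.} Condition~\eqref{Good1} is immediate: the centers $c_i,2-c_i$ lie in $K$ because $p>2$, and the radii $|r_i|$ lie in $|K^\times|$. For~\eqref{Good2}, Kadziela position gives $|c_i|<1$ and $|r_i|<1$, so every $\calB_i^+$ lies in $\{|z|<1\}$ and every $\calC_i^+$ lies in $\{|z-2|<1\}$. These two regions are disjoint because $|2|=1$ when $p\neq2$. Next, strong Kadziela position gives $d_{ij}<1$, that is, $|r_i|<|c_i-c_j|$. By the ultrametric property, two closed disks are disjoint exactly when the distance between their centers exceeds both radii, so $d_{ij}<1$ and $d_{ji}<1$ together show that the $\calB_i^+$ are pairwise disjoint. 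Since $s_g$ is an isometry, the $\calC_i^+$ are pairwise disjoint as well.

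The main obstacle is bookkeeping rather than substance. First, the radius convention must be consistent: the "open disk $\calB_i$" has to be $\{|z-c_i|<|r_i|\}$, which is the convention under which~\eqref{rici} and~\eqref{Good3} match. Second, one must confirm that the stated indexing only involves the balls for $i<g$. Once these are fixed, every step above is a one-line ultrametric computation.
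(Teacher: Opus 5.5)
Your proof is correct and complete. The paper states this lemma without any proof, so there is no authors' argument to compare against. Your direct verification of \eqref{Good1}--\eqref{Good3} is the natural way to fill the gap. It rests on two facts: $s_i(z)=c_i+r_i^2/(z-c_i)$, so that $|s_i(z)-c_i|\,|z-c_i|=|r_i|^2$, and $s_g(z)=2-z$ is an isometry.

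The surrounding text suggests a different route: deduce the lemma from van der Put's fundamental domain for $\Gamma$ (Lemma~\ref{L:ballgood}, via Corollary~\ref{P:Kad}), then take its union with the $s_g$-translate for the index-$2$ subgroup $W$. That route only gives a fundamental domain in van der Put's sense. To get a \emph{good} fundamental domain one must still check that the specific generators $w_{i+1}=s_is_g$ pair $\calC_i$ with $\calB_i$ as in \eqref{Good3}, and that is exactly your computation. So your argument is both more elementary and more to the point.

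Your two bookkeeping corrections are right and necessary.
\begin{enumerate}
\item The union must run over $i=0,\dots,g-1$ only. The formula \eqref{rici} defines no disk for the pair $\{1,\infty\}$, and a rank-$g$ group needs exactly $2g$ disks.
\item $\calB_i$ must be read as $\{|z-c_i|<|r_i|\}$, which does not contain $a_i,b_i$, despite the earlier phrase ``smallest open disk containing $\{a_j,b_j\}$''. Indeed, $s_i$ maps the complement of $\{|z-c_i|<\rho\}$ onto $\{|z-c_i|\le |r_i|^2/\rho\}$, and this is the corresponding closed disk only when $\rho=|r_i|$.
\end{enumerate}

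One point is worth stating explicitly. Your appeal to Proposition~\ref{P:W}, which identifies $W$ with $\langle w_1,\dots,w_g\rangle$, uses that $\T$ is in good position. Here that follows from Corollary~\ref{P:Kad}, and it is in any case the standing assumption of that section.
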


 \subsubsection{Genus~2 Mumford curves}\label{S:g2mum}

Suppose that $C/K$ has  split degenerate reduction and is of genus $g=2$. There are three possible stable reductions of $C$:
\begin{enumerate}\label{stabtypes}
  \item[(a)]\label{staba} two projective lines 
   intersecting transversally in $3$ points,
 \item[(b)]\label{stabb} a genus~0 curve with two nodes, and
 \item[(c)]\label{stabc}  two genus~0 curves with one node each, intersecting transversally in a smooth point.
\end{enumerate}
They are respectively shown in Figure~\ref{fig:Types_abc}, which is modified from a figure in \cite{CMR11}:
\begin{figure}[ht]
	\centering
	
	\begin{subfigure}{0.32\textwidth}
		\centering
		\includegraphics[width=0.7\linewidth]{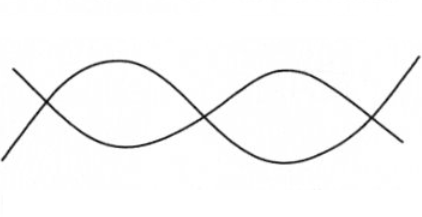}
		\caption{Type~(a)}
	\end{subfigure}
	\hfill
	\begin{subfigure}{0.32\textwidth}
		\centering
		\includegraphics[width=0.65\linewidth]{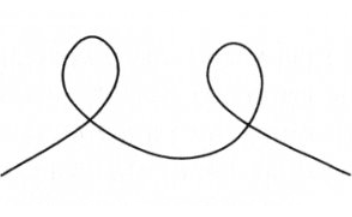}
		\caption{Type~(b)}
	\end{subfigure}
	\hfill
	\begin{subfigure}{0.32\textwidth}
		\centering
		\includegraphics[width=0.45\linewidth]{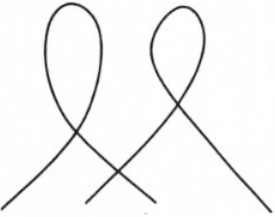}
		\caption{Type~(c)}
	\end{subfigure}
	
	\caption{Stable reductions of genus~$2$ curves with split degenerate reductions}
	\label{fig:Types_abc}
\end{figure}

We denote these types by (a), (b) and (c) as in~\S\ref{S:posg2}, since the
position of the set of branch points is of the respective type,
see~\cite[Chapter~IX, (2.5.3)]{GP80}.
Hence $\B_C$ is in closed disk position if and only if the
stable reduction is of type~(a).
In this case $\B_C$ is in good position, and we can move it to strong Kadziela
position.

\begin{ex}\label{E:KadBad}
The following curve has branch points in Kadziela position that are not in strong Kadziela position:
$$ X_1\colon y^2 = x(x-5^3)(x-5)(x-195)(x-1).$$
  Its stable reduction is of type~(b) as one sees by looking at the
  branch points $0, 5^2, 1, 39, 1/5, \infty$ of the curve obtained by $x\mapsto
  x/5$.
  One can easily show that there is no transformation of $X_1$ with the property that $\B_{X_1}$ is
  in weak Kadziela position. On the other hand, the set of fixed points is
  $\T = \{0,b_0,a_1,b_1,1,\infty\}$, where
  \[b_0 = 5^{3} \cdot 87495069218 + O(5^{20}),\ \ \ a_1 = {5 \cdot 7806971503561 + O(5^{20})},\ \ \ b_1 = {5 \cdot 12203741012063 + O(5^{20})};\]
  see Example~\ref{Ex:KadExamples}, and hence $d_{01} = 1/25$ and $d_{10} =
  1$, that is, the set $\T$ is in weak Kadziela position, and is of type
  (b). 
  
  This shows that the position of the branch and fixed points need not be the same and thus gives a
  counterexample to to the conjecture in \cite[\S IX,
  page~282]{GP80}, repeated in~\cite[Conjecture~3.1]{Kad07}. The same
  situation occurs for the curves $X_2,\ldots,X_6$
  in~\cite[\S7.4]{Kad07}.
\end{ex}

\begin{rk}\label{R:}
  It was already shown by Yelton in~\cite{Yel23} that the above-mentioned conjecture does
  not hold in general, but that a modifed (and stronger) version does hold.
  Namely, one has to assume that $\T$ is \textit{optimal} in the sense
  of~\cite{Yel23}. In fact Yelton works in the more general setting of
  superelliptic Mumford curves.
\end{rk}

\section{Computing the Schottky group of a hyperelliptic Mumford
curve}\label{S:FixedFromRam}

No general algorithm to compute the Schottky group corresponding
to a curve with split degenerate reduction is known. If the curve
is hyperelliptic and the fixed points are in strong Kadziela position
then such an algorithm was provided by Kadziela in his thesis~\cite{Kad07}, and we
recall it here. We also extend it to the case where the fixed points are in
weak Kadziela position. Finally, we describe a new method based on
multivariate Hensel lifting that is much faster in practice,
see~\S\ref{S:Hensel}. While we cannot prove that our new method always
gives the correct result, we show how to check when it terminates, and when the
fixed points are in weak Kadziela position, then the correctness of the
result can be determined. 

Let $C/K$ be a hyperelliptic Mumford curve uniformized via a Whittaker
group $W$.
Following Kadziela~\cite{Kad07}, we assume in this section that both the
set of fixed points $\T$ of the generators $s_0,\ldots,s_g$ of the
group $\Gamma$ and the set $\B_C$ of branch points of the curve $C$ are in
Kadziela position, so in particular, $0,1,\infty\in \T\cap \B_C$.
We also assume that $\T$ is in good position.
The key idea is that, by Proposition~\ref{P:hyp_eqn}, the theta function $F(z) = F_{0,1}(z)= \theta_{\Gamma}(0,1\;z)$
maps $\T$ bijectively onto $\B_C$, and we know the
latter. Since we can find the $s_i$ from their fixed points, we only need
to compute the preimages of $\B_C$ under $F$. The
problem is that even the definition of $F$ depends on $\T$, which is
precisely what we are trying to compute!
However, once we have computed $\T$ to a certain precision, we may
then approximate the function $F$ to a certain precision.

\subsection{Kadziela's algorithm}\label{S:Kadziela}

Kadziela's approach is as follows
\begin{enumerate}
  \item\label{Step:Initial} Compute initial approximations of $\T$.
  \item\label{Step:Lifting} Lift $\T$ digit by digit, by trying all possible next digits,
    evaluating a suitable  approximation $L_n$ of $F$ in the candidates and
    checking which choice of next digits for each $z\in \T$ minimizes
    $L_n(z)-r$ where $z$ runs through $\T$ and $r$ runs through $\B_C$.
\end{enumerate}

His algorithm is reproduced with some minor errors in~\cite[Algorithm~5.5]{MR15}. 
Kadziela showed~\cite[Theorem~6.10]{Kad07} that it is guaranteed
to give us $\T$ to any
desired precision after finitely many steps, provided that $\T$ is indeed in
strong Kadziela position.
We extend this result to weak Kadziela in Theorem~\ref{T:approx}.
Kadziela used his algorithm to compute $\T$ in some examples
in~\cite[\S7.4]{Kad07}. We found that in these examples, $\T$
is in weak Kadziela position, but not in strong Kadziela position.

We have implemented a version of Kadziela's algorithm, using both the naive
and the iterative approach to computing $F$, and used it to compute
$\T$ in a number of examples.  
The algorithm requires to try up to $(\#k)^{2g-1}$ next digits, and hence
is quite slow if $\#k$ or $g$ is large.
The computation for each possible next digit is independent
of one another, so the algorithm can be parallelized (and we have done so
in our implementation of Kadziela's algorithm).
Note however that for fixed $\#k$ and $g$ this algorithm is already polynomial time in the
number of digits of precision when using the iterative approach to computing $F$, even if in
practice it is too slow to be useful for large $\#k$ or $g$.

Our new method discussed in~\S\ref{S:Hensel} gives a different approach to
Step~\eqref{Step:Lifting} based on Hensel lifting.
In both algorithms, we do not strictly need to know an initial approximation; we can
simply try all possible approximations until we find one that lifts
correctly. In practice, this can be very slow, so that it is 
preferable to use an initial approximation, if one is available. Studying these first
approximations is the goal of the next subsection.

\subsection{Approximations}\label{S:Initial}
Kadziela solved Step~\eqref{Step:Initial}
under the fairly restrictive condition that $\T$ is in strong Kadziela position.
Recall from~\S\ref{S:posg2} (see also~\cite{vdPT26} for details)   that in genus~2 this is only satisfied by one of the three possible
types of position. We weaken his conditions to weak Kadziela position.
For $\gamma\in \Gamma$, we write $\ell(\gamma)$ for the length of $\gamma$
as a reduced word in the generators $s_0,\ldots,s_{g}$.

\begin{thm}\label{T:approx}
Suppose that $\T$ is in weak Kadziela position. 
Then $F \colonequals F_{0,1}$  satisfies 
		$F(0) = 0$, $F(1) = \infty$, and $F(\infty) = 1$.
 Moreover, for $n\ge 0$, let 
$$L_n(z) = \prod_{\ell(\gamma) = n} \frac{z - \gamma(0)}{z -
  \gamma(1)}\,.$$
Then we have 
  $F(z) = \prod_{n = 0}^{\infty} L_n(z)$.
	For $z\in \T\setminus\{0,1,\infty\}$, we have	
	\begin{itemize}
		\item $F(z)\equiv 0\mod\pi$,
		\item $F(z) \equiv \begin{cases}
			-4b_0 \displaystyle\prod_{i=1}^{g-1}\prnths{1 - \left(\frac{a_i-b_i}{a_i+b_i}\right)^2}\mod\pi^2 & \text{if }\ z=b_0,  \\
			-2z \displaystyle\prod_{i=1}^{g-1}\prnths{1 + \frac{(a_i-b_i)^2}{(a_i+b_i)(2z - a_i - b_i)}} \mod\pi^2 & \text{otherwise},
		\end{cases}$
		\item $F(z) \mod\pi^t = \displaystyle\prod_{i=0}^{t} L_i(z)\mod\pi^t = \prod_{i=0}^{t} L_i(z\mod\pi^t)$ for $t\geq 3$.
	\end{itemize}
\end{thm}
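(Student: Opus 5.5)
The plan is to work directly with the product expansion of $F_{0,1}=(0-1,z-\infty)_\Gamma$ over the reduced words of the free product $\Gamma=\langle s_0\rangle*\cdots*\langle s_g\rangle$ (Definition~\ref{D:thetaGamma} with $W$ of index~2, together with Proposition~\ref{P:W}). Grouping the factors by word length gives $F=\prod_n L_n$, provided the product converges in the order of increasing length. The main work is a contraction estimate: for a reduced word $\gamma$ of length $n$, the factor
\[
\frac{z-\gamma(0)}{z-\gamma(1)}=1+\frac{\gamma(0)-\gamma(1)}{z-\gamma(1)}
\]
should be congruent to $1$ modulo a power of $\pi$ that grows linearly in $n$, uniformly for $z$ in the relevant region, in particular for $z\in\T$. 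Once this is established, the first two assertions and the third bullet follow by truncating the product and computing $L_0,L_1,L_2$ explicitly.

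For the contraction estimate, first fix the action of each generator. By the explicit matrices in the paper, $s_g(z)=2-z$. Each $s_i$ with $i<g$ maps the complement of the open disk $\calB_i=B(c_i,|r_i|)$ into $\calB_i^+$, and for $w$ outside $\calB_i$ it satisfies
\[
|s_i(w)-c_i|=\frac{|r_i|^2}{|w-c_i|}.
\]
By weak Kadziela position, $d_{ij}\le 1$, so the disks $\calB_i$ are pairwise disjoint. I would then follow a reduced word from the right. A letter $s_i$ acting on a point in $\calB_j^+$ with $j\neq i$ lands in $\calB_i^+$ and shrinks the relevant distance by roughly the factor $d_{ij}$. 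The letter $s_g$ exchanges the region near $0$ with the region near $2$ isometrically, so it only moves points between the disks $\calB_i$ and $\calC_i=s_g(\calB_i)$ without changing distances. The obstruction compared with Kadziela's strong case is that a single step may have $d_{ij}=1$ and give no contraction. However, two consecutive nontrivial steps $i\to j\to i$ contract by $d_{ij}d_{ji}<1$, and a step $i\to j\to k$ with $k\ne i$ should involve some $d<1$. This is because all $|c_i|$ are distinct up to the allowed equalities $|a_i|=|b_i|$. I would check this carefully using the ordering $|b_0|<|a_1|\le\cdots<1$ of Definition~\ref{D:KadzPos}. The target bound is
\[
|\gamma(0)-\gamma(1)|\le C\rho^{\lfloor n/2\rfloor}, \qquad \rho<1,
\]
together with a lower bound on $|z-\gamma(1)|$ for $z\in\T$. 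Since $K$ is discretely valued, $\rho\le|\pi|$, which should yield the congruence modulo $\pi^t$ already from $t$ terms; the third bullet needs this quantitatively. This estimate is the main obstacle, and in particular the bookkeeping of how many non-contracting steps can occur in a row.

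The special values come next. For $F(0)=0$ and $F(1)=\infty$: the identity factor vanishes, respectively has a pole, and all other factors at $z=0$ (respectively $z=1$) are nonzero and finite except for those coming from the stabilizers $\langle s_0\rangle$ and $\langle s_g\rangle$, which only increase the order of the zero or pole. For $F(\infty)=1$, I would use the cross-ratio convention for $z=\infty$, under which every factor equals $1$. Finally, I would compute the low-order terms. Since $|z|<1$ and $v(z)\ge1$ for $z\in\T\setminus\{0,1,\infty\}$, we have $L_0=z/(z-1)\equiv -z$. The $s_g$-factor is $(z-2)/(z-1)\equiv 2$. For $i<g$ I would use
\[
s_i(0)=\frac{2a_ib_i}{a_i+b_i}, \qquad s_i(1)=\frac{a_i+b_i-2a_ib_i}{2-a_i-b_i},
\]
and expand $(z-s_i(0))/(z-s_i(1))$ modulo $\pi^2$ to obtain the stated factors $1+\frac{(a_i-b_i)^2}{(a_i+b_i)(2z-a_i-b_i)}$. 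For $z=b_0$, the $s_0$-factor and $L_0$ degenerate and must be combined, which produces $-4b_0\prod(1-(\tfrac{a_i-b_i}{a_i+b_i})^2)$. All words of length $\ge2$ contribute $\equiv1 \bmod \pi^2$ by the contraction estimate, and every term in $\B_C$ being small gives $F(z)\equiv0\bmod\pi$.
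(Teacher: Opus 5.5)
Your overall architecture (expand $F$ by word length, show $L_n=1+\mu_n$ with $\mu_n$ small, then read off $L_0$ and $L_1$) is the paper's. The gap is in the one genuinely nontrivial input, the contraction estimate: your sketch of it does not work.

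The paper does not re-derive this estimate. It imports Kadziela's inequality $|\ell_\gamma|\le d_{a_n0}\prod_{i=1}^{n-1}d_{a_ia_{i+1}}d_{a_{i+1}a_i}$ (the first claim of \cite[Corollary~6.9]{Kad07}, whose proof survives under weak position). It then gets $v(\ell_\gamma)\ge n-1$ because each \emph{symmetric} product $d_{ij}d_{ji}$ lies in $|K^\times|$ and is $<1$, hence $\le|\pi|$, while $d_{a_n0}\le1$.

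Your bookkeeping with one-sided factors $d_{ij}$ tracks the wrong quantity. The relative distance $|s_i(w)-c_i|/|r_i|$ does not compound along a word, whereas diameters do. Indeed $s_i$ maps the open disk $\calB_j$ onto an open disk of radius $|r_i|^2|r_j|/|c_i-c_j|^2=|r_i|\,d_{ij}d_{ji}$, so every letter contracts, and telescoping this is where the symmetric products come from.

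With one-sided factors, your claim that two consecutive steps cannot both be non-contracting is false once $g\ge3$. Take $v(b_0)=5$, $v(a_1)=4$, $v(b_1)=3$, $v(a_2)=2$, $v(b_2)=1$. This is in weak Kadziela position with $d_{10}=d_{21}=1$, so along the word $s_2s_1s_0$ both transitions $0\to1$ and $1\to2$ have one-sided factor $1$.

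Even granting your target $|\gamma(0)-\gamma(1)|\le C\rho^{\lfloor n/2\rfloor}$, it only gives $L_n\equiv1\bmod\pi^{\lfloor n/2\rfloor}$. That would force you to take the product up to $i\approx 2t$, whereas the third bullet, with the product stopping at $i=t$, needs one power of $\pi$ per letter.

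Finally, the naive ping-pong statement that $s_i$ maps $\calB_j^+$ into $\calB_i^+$ is not available under weak position. The condition $d_{ji}=1$ forces $\calB_i\subset\calB_j^+$, so you must control where orbit points actually land. That is what Kadziela's preparatory results in \cite[Chapter~6]{Kad07} do.

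Two smaller points. First, your claim that words of length $\ge2$ give factors $\equiv1\bmod\pi^2$ is wrong under weak position. In the configuration above, $\gamma=s_1s_0$ and $z=a_1$ give $v(\ell_\gamma)=1$. The claim is also unnecessary: since $v(L_0(z))=v(z)\ge1$, the remaining factors are only needed modulo $\pi$, which is how the paper argues ($L_n\equiv1\bmod\pi$ for $n\ge2$).

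Second, for the $L_1$ factors you must prove integrality of $r_i^2/\bigl(c_i(1-c_i)(z-s_i(1))\bigr)$. The paper gets this from $|r_i|\le|c_i|$ (Lemma~\ref{lem:ri_ci}) together with Kadziela's bound $|z-s_i(1)|\ge1$ \cite[Corollary~6.7]{Kad07}, and your expansion needs the same two facts. Also, the $s_0$-factor at $z=b_0$ is not degenerate: it is the unit $(b_0-2)/(b_0-1)\equiv2$, which is the source of the constant $4$.
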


\begin{rk}\label{R:}
In addition to the initial approximations, the result also tells us how
  well the finite subproducts $L_n$, evaluated in approximations of the
  fixed points, approximate the function $F$, evaluated in the actual fixed
  points. 
  This is crucial in Kadziela's solution to Step~\eqref{Step:Lifting}.
\end{rk}

We prove Theorem~\ref{T:approx} by following Kadziela's proof
of~\cite[Theorem~6.10]{Kad07}.
Fortunately, all statements in \cite[Chapter~6]{Kad07}, except
Corollary~6.9 and Theorem~6.10, are still valid under our weaker condition.

\begin{lemma}\label{lem:ri_ci}
	For $0 < i < g$, we have $|r_i| \leq |c_i|$.
\end{lemma}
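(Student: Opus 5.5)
Here $c_i=(a_i+b_i)/2$ and $r_i=(a_i-b_i)/2$ by~\eqref{rici}, and $p>2$, so $|2|=1$. The claim is therefore equivalent to $|a_i-b_i|\le|a_i+b_i|$ for $0<i<g$. We argue by contradiction using the ultrametric inequality together with the weak Kadziela conditions. There is no need to invoke good position; only the Kadziela and weak Kadziela hypotheses enter.

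First, the ultrametric inequality gives $|a_i-b_i|\le\max(|a_i|,|b_i|)=|b_i|$, using $|a_i|\le|b_i|$ from Kadziela position. If $|a_i|<|b_i|$, then $|a_i+b_i|=|b_i|\ge|a_i-b_i|$ and we are done. So it remains to handle the case $|a_i|=|b_i|$.

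Suppose $|a_i|=|b_i|$ and, for a contradiction, $|r_i|>|c_i|$. Since $2r_i=a_i-b_i$ and $2c_i=a_i+b_i$, the ultrametric inequality gives $|a_i|=|r_i+c_i|=|r_i|$, hence $|r_i|=|b_i|$ and $|c_i|<|b_i|$. Now compare with $\calB_0$, which has $c_0=r_0=b_0/2$ and $|b_0|<|a_1|\le|b_i|$. We get
\[
|c_i-c_0|\le\max(|c_i|,|b_0|)<|b_i|=|r_i|,
\]
so $d_{i0}=|r_i|/|c_i-c_0|>1$. This contradicts the weak Kadziela condition $d_{i0}\le1$. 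Geometrically, the open disk $\calB_i$ would have to contain $c_0$, and hence meet $\calB_0$, which the first weak Kadziela condition rules out.

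The only delicate point is making sure the bound $|c_i-c_0|<|r_i|$ is strict. It is: $|c_i|<|r_i|$ by the contradiction hypothesis, and $|c_0|=|b_0|<|b_i|=|r_i|$ by Kadziela position. This also explains why the statement is restricted to $0<i<g$: for $i=0$ we have $|r_0|=|c_0|$ trivially, and the $\infty$-pair carries no disk data. So the comparison disk $\calB_0$ is always available when $0<i<g$.
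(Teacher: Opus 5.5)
Your proof is correct. The paper gives no argument of its own for this lemma, only a pointer to the proof of Corollary~5.4 in Kadziela's thesis, and it then uses the lemma inside the proof of Theorem~\ref{T:approx}, under weak Kadziela position. Your argument is a self-contained replacement for that citation.

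It makes explicit what is actually needed:
\begin{itemize}
\item the ordering $|b_0|<|a_1|\le|b_i|$ coming from Kadziela position, and
\item the single non-strict inequality $d_{i0}\le 1$, i.e.\ $\calB_i\cap\calB_0=\varnothing$.
\end{itemize}
This confirms that the cited argument survives the weakening from strict to non-strict inequalities that the paper relies on.

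Your proof also shows that some hypothesis of this kind is genuinely required, even though the lemma's statement leaves it implicit. Kadziela position alone does not suffice. For example, with $g=2$, $b_0=p^3$, $a_1=p+p^2$ and $b_1=-p$, the Kadziela ordering holds but $|c_1|=|p|^2<|p|=|r_1|$. Consistently with your argument, this configuration has $d_{10}=p>1$.

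One cosmetic point: if $c_i=c_0$, the quotient $d_{i0}$ is undefined. The final contradiction is therefore cleaner phrased as $c_0\in\calB_i\cap\calB_0$, which is exactly your geometric remark.
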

\begin{proof}
	See the proof of \cite[Corollary~5.4]{Kad07}.
\end{proof}

Let  $\gamma = \gamma_{a_1}\cdots \gamma_{a_n}$,
$\gamma_{a_i}\in\{s_0,s_1,\dots,s_{g}\}$, be a reduced word of length $n\geq 2$. Let $\ell_{\gamma} = \frac{\gamma(1) - \gamma(0)}{z - \gamma(1)}$. Then $\frac{z - \gamma(0)}{z - \gamma(1)} = 1 + \ell_{\gamma}$.

\begin{prop}\label{prop:valoflgamma}
	For $z\in \T\setminus\{0,1,\infty\}$, we have $v(\ell_{\gamma})\geq n-1$.
\end{prop}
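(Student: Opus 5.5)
We want $v(\ell_\gamma)\ge n-1$, where $\ell_\gamma = (\gamma(1)-\gamma(0))/(z-\gamma(1))$, for $z\in\T\setminus\{0,1,\infty\}$ and $\gamma$ a reduced word of length $n\ge 2$. We split this into a lower bound on $v(\gamma(1)-\gamma(0))$ and an exact or upper control of $v(z-\gamma(1))$. The strategy is Kadziela's ball-contraction argument from \cite[Chapter~6]{Kad07}. The paper notes that all of those statements except Corollary~6.9 and Theorem~6.10 survive under weak Kadziela position, so we may use the preliminary lemmas there on images of the disks $\calB_i$ and $\calC_i$ under the $s_j$.

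\textbf{Step 1: the image of $\{0,1\}$ under the last letters.} Write $\gamma=\gamma_{a_1}\cdots\gamma_{a_n}$.
\begin{itemize}
\item If $\gamma_{a_n}=s_g$, we have $s_g(1)=1$ and $s_g(0)=2$. So $\gamma(1)-\gamma(0)=\gamma'(1)-\gamma'(2)$, with $\gamma'$ of length $n-1$ not ending in $s_g$.
\item If $\gamma_{a_n}=s_0$, we have $s_0(0)=0$, and $s_0(1)=b_0/(2-b_0)$ lies in $\calB_0$.
\end{itemize}
In every case, after peeling at most one letter, the two points $\gamma(0)$ and $\gamma(1)$ are images under a reduced word $\delta$ of two points lying in a common closed disk. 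That disk is either the unit disk or one of the $\calB_i^+$, and in each case $\delta$ does not start by undoing it.

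\textbf{Step 2: contraction.} Each application of an $s_j$ with $j<g$ maps the complement of $\calB_j$ into $\calB_j^+$, shrinking distances by the factor $|r_j|^2/|z-c_j|^2$. Under weak Kadziela position we have $d_{ij}\le 1$ and $d_{ij}d_{ji}<1$. Hence each single step has contraction factor at most $1$, and two consecutive steps through distinct disks contract by at least $|\pi|$.

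The factor $s_g$ (the translation $z\mapsto 2-z$) is an isometry, but in a reduced word it is never repeated. So it only conjugates disks $\calB_i$ into disks $\calC_i$ of the same radius. Similarly, $|r_i|<1$ gives a contraction by $|\pi|$ at the first step into a small disk from the unit scale.

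Putting these together, we aim to show by induction on $n$ that
\[
v(\gamma(1)-\gamma(0)) \ge (n-1) + v(z-\gamma(1))
\]
in the relevant range. Equivalently, the diameter of $\gamma(\{0,1\})$ is at most $|\pi|^{n-1}$ times the distance from $z$ to that set.

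\textbf{Step 3: controlling the denominator.} For $z\in\T\setminus\{0,1,\infty\}$, the point $z$ is a fixed point $a_i$ or $b_i$ lying on $\partial\calB_i$, while $\gamma(1)$ lies in the disk determined by the first letter $\gamma_{a_1}$. If $\gamma_{a_1}\neq s_i$, then $\gamma(1)$ lies in a disjoint disk. In that case $|z-\gamma(1)|$ equals the distance between the centers, or $1$, and $\ell_\gamma$ inherits the full contraction.

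If $\gamma_{a_1}=s_i$, we use invariance under $s_i$. Since $s_i$ fixes $z$, we can rewrite
\[
\ell_\gamma=\frac{s_i\gamma'(1)-s_i\gamma'(0)}{s_iz-s_i\gamma'(1)}.
\]
This is the cross-ratio-type expression $(z,\gamma(0);\,\cdot\,)$, which is M\"obius invariant up to a factor we can bound. The effect is to replace $\gamma$ by $\gamma'$ with a denominator $z-\gamma'(1)$, at the cost of the contraction factor of $s_i$. We then argue that the shortened word still gives enough total contraction.

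\textbf{Main obstacle.} The main obstacle is exactly the boundary case $d_{ij}=1$ allowed by weak but not strong position. There a single step gives no valuation gain, so the count $n-1$ must come from pairing consecutive letters using $d_{ij}d_{ji}<1$. The step I would try first is an induction on pairs of letters, with $s_g$-steps absorbed as isometries. I would then verify carefully that the first and last letters each lose at most the one unit accounted for by the $-1$ in $n-1$.
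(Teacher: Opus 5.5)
There is a genuine gap. The estimate that carries the argument is only announced (``we aim to show'', ``we then argue''). The concrete mechanisms you offer for it also break down exactly in the boundary case $d_{ij}=1$, which is the only case weak position adds to strong position.

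\begin{enumerate}
\item In Step~2, $s_j$ multiplies $|x-y|$ by $|r_j|^2/(|x-c_j|\,|y-c_j|)$. The hypothesis $d_{ij}d_{ji}<1$ does not force two consecutive letters to contract. Take $g=3$ and $(v(b_0),v(a_1),v(b_1),v(a_2),v(b_2))=(5,4,3,2,1)$. This set is in weak Kadziela position with $d_{10}=d_{21}=1$, and $s_2s_1$ is an isometry on $\calB_0^+$.
\item In Step~3, $|z-\gamma(1)|$ need not be a centre-to-centre distance. Take $g=2$ and $(v(b_0),v(a_1),v(b_1))=(3,2,1)$, so $d_{10}=1$ and $d_{01}=|\pi|^2$. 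Then $|a_1-w|=|\pi|^2<|\pi|=|c_1-c_0|$ for every $w\in\calB_0^+$. Likewise $s_1$ maps $\calB_0^+$ to points of absolute value $|a_1|$, not to distance $|c_1-c_0|$ from $c_0$. So contraction factors read off from the centres are wrong.
\item Your reduction for $\gamma=s_i\gamma'$ and $z\in\{a_i,b_i\}$ is the exact identity
\[
\ell_\gamma=\frac{\gamma'(1)-\gamma'(0)}{z-\gamma'(1)}\cdot\frac{z-c_i}{\gamma'(0)-c_i}.
\]
Suppose $\gamma'$ begins with $s_j$, so $\gamma'(0)\in\calB_j^+$, and $d_{ij}=1$, so $|r_j|<|c_i-c_j|$. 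Then $|\gamma'(0)-c_i|=|c_i-c_j|=|r_i|$, and the second factor has absolute value $1$; this happens for $\gamma=s_1s_0s_1$, $z=a_1$ in the genus~$2$ example. You would then need valuation $\ge n-1$ for a word of length $n-1$, one more than the induction hypothesis supplies. So an induction on the statement itself does not close. You would need a stronger invariant recording where $\gamma(\{0,1\})$ actually lies and how small it is relative to that location.
\end{enumerate}

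The paper does not argue this way. It quotes the first claim of \cite[Corollary~6.9]{Kad07}, namely $|\ell_\gamma|\le d_{a_n0}\prod_{i=1}^{n-1}d_{a_ia_{i+1}}d_{a_{i+1}a_i}$. It then concludes because $d_{a_n0}\le1$ and each product $d_{a_ia_{i+1}}d_{a_{i+1}a_i}<1$ lies in $|K^\times|=|\pi|^{\Z}$, hence is at most $|\pi|$. So the ingredient missing from your sketch is a bound of this product form, with one factor $|\pi|$ per adjacent pair of letters. Note that you set aside Corollary~6.9 entirely, whereas the paper uses its first claim.

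Be aware, though, that this centre-based bound itself needs care when $d_{ij}=1$. Over $K=\Q_5$ take $b_0=125$, $a_1=25$, $b_1=5$, $\gamma=s_1s_0s_1$ and $z=a_1$. One computes $\gamma(0)=175/16$ and $\gamma(1)=3975/367$. Hence $\gamma(1)-\gamma(0)=-625/5872$ and $z-\gamma(1)=5200/367$, so $v(\ell_\gamma)=2$. This agrees with the proposition ($n-1=2$). But the displayed bound would force $v(\ell_\gamma)\ge4$, since $d_{10}(d_{10}d_{01})^2=5^{-4}$. So a proof valid in weak position has to follow where $\gamma(\{0,1\})$ actually lands, and that is what your Steps~2 and~3 would have to be repaired to do.
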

\begin{proof}
	The first claim in \cite[Corollary~6.9]{Kad07} is still true and says
	\[|\ell_{\gamma}| \leq d_{a_n0}\prod_{i = 1}^{n-1} d_{a_ia_{i+1}}d_{a_{i+1}a_i}.\]
	The result follows from this inequality and the weak Kadziela condition.
\end{proof}

\begin{proof}[Proof of Theorem~\ref{T:approx}]
	Thanks to Proposition~\ref{prop:valoflgamma}, for $n\geq 2$, $L_n(z) = 1 + \mu_n$
        with $v(\mu_n)\geq n-1$. Hence only $L_0$, $L_1$ and $L_2$ can
        contribute to $F(z) \mod\pi^2$. Since
	\[\L_0(z) = \frac{z}{z-1} = -z + \text{higher order terms},\]
	we have $L_0(z) \equiv -z \mod\pi^2$ and $L_0(z) \equiv 0 \mod\pi$. Therefore, it suffices to compute $L_1$ and $L_2$ modulo $\pi$. Using Proposition~\ref{prop:valoflgamma} again, $L_2(z) \equiv 1 \mod\pi$. Regarding $L_1$, we have
	\[L_1(z) = \prnths{\frac{b_0 - 2}{b_0 - 2 + \frac{b_0}{z}}\frac{z-2}{z-1}}\prod_{i = 1}^{g - 1}\prnths{1 + \frac{r_i^2}{c_i(1 - c_i)(z - s_i(1))}}\]
	as explained on \cite[page~45]{Kad07}. The first part is congruent to $4$ (resp. $2$) if $z = b_0$ (resp. $z\neq b_0$) modulo $\pi$. For the second part, take $1 \leq i \leq g-1$. \cite[Corollary~6.7]{Kad07} says that $|z - s_i(1)|\geq 1$; combining this with Lemma~\ref{lem:ri_ci}, we obtain that
	\[\left|\frac{r_i^2}{c_i(1 - c_i)(z - s_i(1))}\right|\leq 1.\]
	Moreover, one can easily check that
	\[
	\frac{r_i^2}{c_i(1 - c_i)(z - s_i(1))} = \frac{\frac{1}{2}(a_i - b_i)^2}{(a_i+b_i)\prnths{z - \frac{1}{2}(a_i+b_i) + a_ib_i - \frac{1}{2}z(a_i+b_i)}} \equiv \frac{\frac{1}{2}(a_i - b_i)^2}{(a_i+b_i)\prnths{z - \frac{1}{2}(a_i+b_i)}}\mod\pi.
	\]
	Our claim follows.
\end{proof}
As a special case, we obtain:
\begin{cor}\emph{(\cite[Theorem~6.10]{Kad07})} \label{MainApproxThm}
	Suppose that $\T$ is in strong Kadziela position.
        Then
	\begin{itemize}
		\item $F(0) = 0$, $F(1) = \infty$, and $F(\infty) = 1$.
	\end{itemize}
	For $z\in \T\setminus\{0,1,\infty\}$, we have	
	\begin{itemize}
		\item $F(z)\equiv 0\mod\pi$,
		\item $F(z) \equiv \begin{cases}
		-4b_0 \mod\pi^2 & \text{if }\ z=b_0,  \\
		-2z\mod\pi^2 & \text{otherwise},
		\end{cases}$
		\item $F(z) \mod\pi^t = \displaystyle\prod_{i=0}^{t-2} L_i(z)\mod\pi^t = \prod_{i=0}^{t-2} L_i(z\mod\pi^t)$ for $t\geq 3$.
	\end{itemize}
\end{cor}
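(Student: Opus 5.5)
The plan is to derive Corollary~\ref{MainApproxThm} as a specialization of Theorem~\ref{T:approx}, sharpening only the places where the strong condition $d_{ij}<1$ gives extra valuation. First, strong Kadziela position implies weak Kadziela position, because $d_{ij}<1$ gives both $d_{ij}\le 1$ and $d_{ij}d_{ji}<1$. So Theorem~\ref{T:approx} applies at once. It yields $F(0)=0$, $F(1)=\infty$, $F(\infty)=1$, the product expansion $F=\prod_n L_n$, and $F(z)\equiv 0 \bmod \pi$ for $z\in\T\setminus\{0,1,\infty\}$.

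Next I would simplify the mod $\pi^2$ formula of Theorem~\ref{T:approx}. Since $v(F(z))\ge 1$, it suffices to show that each factor in the products there is $\equiv 1 \bmod \pi$.
\begin{itemize}
\item For $z=b_0$, the factor is $1-\bigl(\tfrac{a_i-b_i}{a_i+b_i}\bigr)^2 = 1-(r_i/c_i)^2$. The condition $d_{i0}<1$ gives $|r_i|<|c_i-c_0|$. Using $|c_0|=|b_0|<|a_1|$ and Lemma~\ref{lem:ri_ci}, one gets $|c_i-c_0|=|c_i|$, hence $|r_i/c_i|<1$ and the factor is $\equiv 1$.
\item For $z\neq b_0$, the relevant quantity is $\tfrac{(a_i-b_i)^2}{(a_i+b_i)(2z-a_i-b_i)} = \tfrac{2 r_i^2}{c_i(z-c_i)}$ (up to the unit $2$). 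If $z\in\{a_j,b_j\}$ with $j\ne i$, then $|z-c_i|=|c_j-c_i|$ because $z\in\calB_j$ lies outside $\calB_i^+$ by strong position. So the quantity has absolute value $|r_i|\,d_{ij}/|c_i|<1$.
\item If $z\in\{a_i,b_i\}$, then $z-c_i=\pm r_i$, so the quantity equals $\pm 2r_i/c_i$ up to a unit. To make this small I need $|r_i|<|c_i|$, which I would obtain from $d_{i0}<1$ and $|c_i-c_0|=|c_i|$ as above.
\end{itemize}

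Third, the refined truncation bound. In the proof of Proposition~\ref{prop:valoflgamma}, the bound $|\ell_\gamma|\le d_{a_n0}\prod_{i=1}^{n-1} d_{a_ia_{i+1}}d_{a_{i+1}a_i}$ now has every $d$-factor $<1$, i.e.\ of valuation $\ge 1$. This gives $v(\ell_\gamma)\ge 2(n-1)+1\ge n$ in place of $n-1$. One must check whether the first factor $d_{a_n0}$ can be $1$ when $a_n=g$, since $s_g$ is not among the indexed disks. I would handle this as Kadziela does, using $d$-values involving the disk of $s_g$ or the index shift (with $a_n=0$ forcing $d_{00}$ interpreted appropriately). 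Granting $v(\mu_n)\ge n$ for $L_n=1+\mu_n$, with $n\ge 2$ and $z\in\T\setminus\{0,1,\infty\}$, the factors $L_n$ with $n\ge t-1$ are $\equiv 1 \bmod \pi^{t-1}$. Since $v(F(z))\ge 1$, this gives $F(z)\equiv\prod_{i=0}^{t-2}L_i(z)\bmod\pi^t$. Replacing $z$ by $z \bmod \pi^t$ is allowed because each $L_i$ with $i\le t-2$ is a finite product of factors $\tfrac{z-\gamma(0)}{z-\gamma(1)}$ whose denominators are units or controlled as in \cite[Corollary~6.7]{Kad07}, so they are locally Lipschitz with constant $\le 1$.

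I expect the main obstacle to be this last step: verifying the improved valuation $v(\ell_\gamma)\ge n$ from Kadziela's product bound, including the boundary factor $d_{a_n0}$. It also requires the precise bookkeeping of how the extra factor of $\pi$ in $F(z)$ upgrades precision from $\pi^{t-1}$ to $\pi^t$. I would first try to reproduce \cite[Corollary~6.9]{Kad07} verbatim under the strong hypothesis.
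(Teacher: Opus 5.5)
Your first two steps are sound. Strong position implies weak position, so Theorem~\ref{T:approx} applies. Your estimates ($|c_i-c_0|=|c_i|$, hence $|r_i/c_i|=d_{i0}<1$; and $|r_i^2/(c_i(z-c_i))|\le d_{ij}$ for $z\in\{a_j,b_j\}$, $j\ne i$) correctly collapse the mod~$\pi^2$ formula of Theorem~\ref{T:approx} to $-4b_0$ and $-2z$, because $v(z)\ge 1$ and every correction factor is $\equiv 1\bmod\pi$. The third bullet, however, has a genuine gap. It is the only place where the corollary is stronger than Theorem~\ref{T:approx}: truncation after $L_{t-2}$ instead of $L_t$, whereas the weak bound $v(\mu_n)\ge n-1$ only gives truncation after $L_{t-1}$. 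It needs $v(\ell_\gamma)\ge n$ for reduced words of length $n\ge 2$, and you grant this rather than prove it.

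The route you sketch for that bound does not work. The strong hypothesis constrains only $d_{ij}$ with $i,j\le g-1$. You flag the leading factor $d_{a_n0}$ with $a_n=g$, but every factor $d_{a_ia_{i+1}}d_{a_{i+1}a_i}$ with $g\in\{a_i,a_{i+1}\}$ is equally uncontrolled, and your intermediate bound $v(\ell_\gamma)\ge 2(n-1)+1$ is false. Here is a counterexample.
\begin{itemize}
\item Take $K=\Q_p$, $g=2$, $b_0=p^3$, $a_1=p+p^2$, $b_1=p-p^2$. Then $c_1=p$, $r_1=p^2$, $d_{01}=|p|^2$ and $d_{10}=|p|$, so this is strong Kadziela position.
\item With $s_1(z)=c_1+r_1^2/(z-c_1)$, $s_g(z)=2-z$ and $\gamma=s_1s_g$, one gets $\gamma(1)-\gamma(0)=s_1(1)-s_1(2)=r_1^2/((1-c_1)(2-c_1))$, of valuation $4$.
\item Also $a_1-\gamma(1)=r_1-r_1^2/(1-c_1)$, of valuation $2$.
\item Hence $v(\ell_\gamma)=2=n<3$ at $z=a_1$.
\end{itemize}
A terminal $s_g$ fixes $1$ and only moves $\{0,1\}$ to $\{2,1\}$, so it contributes no contraction at all. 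The needed estimate $v(\ell_\gamma)\ge n$ is attained with equality here, so it cannot come from counting factors $<1$; it requires a genuine analysis of words containing $s_g$. This is exactly the strong-position input in Kadziela's thesis, the part of his Corollary~6.9 that the paper notes fails in weak position. The paper does not derive the corollary from Theorem~\ref{T:approx}; it imports it by citing \cite[Theorem~6.10]{Kad07}.

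A smaller point concerns replacing $z$ by $z\bmod\pi^t$. The individual factors are not $1$-Lipschitz: the $s_i$-factor $1+R_iS_i(z)$ of $L_1$ has derivative of absolute value $1/|c_i|>1$ at $z=a_i$. So this step has to be argued for the whole product, using $|\prod_{i\le t-2}L_i(z)|=|z|$.
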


	In fact, the proof of Theorem~\ref{T:approx} says more than what is stated in the statement itself. It allows us to guess the valuations of the fixed points in terms of the valuations of the roots. Let's make this more precise. We continue to use the notation in Theorem~\ref{T:approx} and its proof.

\begin{prop}\label{prop:v(F(z))>=v(z)}
	For a fixed point $z\in \T\setminus\{0,1,\infty\}$, we always have $v(F(z)) \geq v(z)$.
\end{prop}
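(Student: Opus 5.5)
We plan to reuse the factorization $F(z)=\prod_{n\ge0}L_n(z)$ from Theorem~\ref{T:approx} and bound the valuation of each factor separately. Then $v(F(z))=\sum_n v(L_n(z))$, and the claim follows once we show three things: $v(L_0(z))= v(z)$, $v(L_1(z))\ge 0$, and $v(L_n(z))=0$ for $n\ge 2$. These are exactly the ingredients that appeared in the proof of Theorem~\ref{T:approx}. The point is that there we only used them modulo $\pi^2$, whereas here we keep track of exact valuations.

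First, $L_0(z)=z/(z-1)$. For $z\in\T\setminus\{0,1,\infty\}$, Kadziela position gives $|z|<1$. Hence $|z-1|=1$ and $v(L_0(z))=v(z)$.

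Second, for $n\ge2$ we have $L_n(z)=\prod_{\ell(\gamma)=n}(1+\ell_\gamma)$ with $v(\ell_\gamma)\ge n-1\ge1$ by Proposition~\ref{prop:valoflgamma}. Each factor is therefore a unit, so $v(L_n(z))=0$, and the infinite tail contributes valuation $0$.

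The main step is the factor $L_1(z)$. We use the explicit formula quoted from \cite[page~45]{Kad07} in the proof of Theorem~\ref{T:approx}:
\[
L_1(z)=\frac{b_0-2}{b_0-2+\frac{b_0}{z}}\cdot\frac{z-2}{z-1}\cdot\prod_{i=1}^{g-1}\Bigl(1+\frac{r_i^2}{c_i(1-c_i)(z-s_i(1))}\Bigr).
\]
The first part was shown there to be congruent to $4$ or $2$ modulo $\pi$; since $p>2$, it is a unit. For the remaining factors, the proof of Theorem~\ref{T:approx} already shows, via \cite[Corollary~6.7]{Kad07} and Lemma~\ref{lem:ri_ci}, that
\[
\Bigl|\frac{r_i^2}{c_i(1-c_i)(z-s_i(1))}\Bigr|\le 1 .
\]
Hence each factor $1+(\cdot)$ lies in $\O$, and so $v(L_1(z))\ge 0$.

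We expect this step to be the obstacle. A factor $1+(\cdot)$ could even vanish; this might happen, for instance, when $z\in\{a_i,b_i\}$ produces cancellation, and it is why the statement only asserts an inequality. We do not need a lower bound on these factors, only that they lie in $\O$, so integrality suffices. We must also check that the denominators are nonzero, i.e.\ that $z\neq s_i(1)$, which holds because $|z-s_i(1)|\ge1$. Finally, to justify adding valuations over the infinite product, we note that $\prod_{n\ge2}L_n(z)$ converges to a unit because $L_n(z)\to1$. Combining the three estimates gives $v(F(z))\ge v(z)+0+0=v(z)$.
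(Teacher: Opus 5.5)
Your proof is correct and follows essentially the same route as the paper. Both arguments factor $F(z)=\prod_n L_n(z)$, use $v(L_0(z))=v(z)$, note that the first part of $L_1(z)$ and every $L_n(z)$ with $n\ge 2$ are units, and check that each factor $1+R_i\cdot S_i(z)$ of $L_1(z)$ lies in $\O$. The only cosmetic difference is that you bound $|r_i^2/(c_i(1-c_i)(z-s_i(1)))|\le 1$ in one step, whereas the paper writes this term as $R_i\cdot S_i(z)$ and observes $v(R_i),v(S_i(z))\ge 0$ separately; both rest on the same inputs, Lemma~\ref{lem:ri_ci} and $|z-s_i(1)|\ge 1$.
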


\begin{proof} 
We have
\begin{align*}
	L_0(z) &= -z + \text{higher order terms} \equiv -z \mod\pi^{v(z)+1}, \\
	L_1(z) &= \prnths{\frac{b_0 - 2}{b_0 - 2 + \frac{b_0}{z}}\frac{z-2}{z-1}}\prod_{i = 1}^{g-1}\Big(1 + R_i\cdot S_i(z)\Big), \\
	L_n(z) &\equiv 1 \mod\pi \text{ for } n\geq 2,
\end{align*}
where
\begin{equation}\label{eq:Ri_and_Si}
	R_i = \frac{a_i-b_i}{a_i+b_i} \ \ \text{and} \ \ S_i(z) = \frac{a_i - b_i}{2z - (a_i+b_i) + 2a_ib_i - z(a_i+b_i)}.
\end{equation}
Then
\begin{equation}\label{eq:val_of_Fz}
v(F(z)) = v(L_0(z)) + \sum_{i=1}^{g-1} v\big(1 + R_i\cdot S_i(z)\big)
\end{equation}
from which we see that we should always have $v(F(z)) \geq v(z)$
since $v(L_0(z)) = v(z)$ and $v(R_i),v(S_i(z))$ are both nonnegative for each $i$.
\end{proof}

The valuation of $F(z)$ might be higher than the valuation of $z$. By Equation~\eqref{eq:val_of_Fz}, $v(F(z)) > v(z)$ if and only if $1 + R_i\cdot S_i(z) \equiv 0 \mod\pi$ for some index $i$. Moreover, each such index will further widen the gap between $v(F(z))$ and $v(z)$. Let us now study this contribution more closely. 

Fix an index $i\in \{1,\dots,g-1\}$. Checking if $R_i \equiv \pm 1 \mod\pi$ will be important, hence we record the following observation for later use, which is easy to prove:
\begin{lemma}\label{lem:Rimodpi}
	We have
	\begin{itemize}
		\item $R_i \not\equiv 1 \mod\pi$, and
		\item $R_i \equiv -1 \mod\pi \Longleftrightarrow |a_i| < |b_i|$. 
	\end{itemize}
\end{lemma}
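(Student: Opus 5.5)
The plan is to write $R_i = \frac{a_i-b_i}{a_i+b_i}$ and reduce both claims to comparing the absolute values of $a_i$ and $b_i$. These are nonzero, since $0<|b_0|<|a_1|\le|b_1|\le\cdots<1$ in Kadziela position. Also, $p\neq 2$ gives $|2|=1$, so we may freely divide by $2$.

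For the first claim, I will compute $R_i - 1 = \frac{-2b_i}{a_i+b_i}$. Since $|a_i+b_i|\le\max(|a_i|,|b_i|)$, we get
\[
|R_i-1| = \frac{|b_i|}{|a_i+b_i|}\ge \frac{|b_i|}{\max(|a_i|,|b_i|)}.
\]
Kadziela position gives $|a_i|\le |b_i|$ for $1\le i\le g-1$, so $\max(|a_i|,|b_i|)=|b_i|$ and the ratio is $\ge 1$. Hence $R_i\not\equiv 1\bmod \pi$. (Note that $a_i+b_i\neq0$, because $c_i$ is a genuine disk center and Lemma~\ref{lem:ri_ci} gives $|r_i|\le|c_i|$ with $r_i\neq0$.)

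For the second claim, I will compute $R_i+1 = \frac{2a_i}{a_i+b_i}$. There are two cases.
\begin{itemize}
\item If $|a_i|<|b_i|$, then $|a_i+b_i|=|b_i|$. So $|R_i+1| = |a_i|/|b_i|<1$, and therefore $R_i\equiv -1$.
\item If $|a_i|=|b_i|$, then $|a_i+b_i|\le|a_i|$. So $|R_i+1|\ge 1$, and therefore $R_i\not\equiv -1$.
\end{itemize}
Since Kadziela position excludes $|a_i|>|b_i|$, these two cases exhaust all possibilities, which gives the equivalence.

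The only step needing care is the nonvanishing of $a_i+b_i$ and the use of the ordering $|a_i|\le|b_i|$ from Definition~\ref{D:KadzPos}. Everything else is the ultrametric inequality.
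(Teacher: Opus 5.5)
Your proof is correct and is the direct verification the paper has in mind; the paper omits the proof as easy. Writing $R_i-1=-2b_i/(a_i+b_i)$ and $R_i+1=2a_i/(a_i+b_i)$, and using $|2|=1$, the ultrametric inequality and $|a_i|\le|b_i|$ from Kadziela position, settles both claims, and your check that $a_i+b_i\neq 0$ via Lemma~\ref{lem:ri_ci} fills a point the paper leaves implicit.
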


\begin{lemma} \label{lem:L1(z)_contribution}
	Let $z\in\{b_0,a_i,b_i\}$. The term $1 + R_i\cdot S_i(z)$ increases the valuation of $F(z)$ if and only if $z\in\{b_0,a_i\}$ and $|a_i| < |b_i|$.	
\end{lemma}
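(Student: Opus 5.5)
The plan is a direct case analysis on $z\in\{b_0,a_i,b_i\}$, using the explicit formulas \eqref{eq:Ri_and_Si}. By \eqref{eq:val_of_Fz} and the fact that $v(R_i),v(S_i(z))\ge 0$ (shown in the proof of Proposition~\ref{prop:v(F(z))>=v(z)}), the factor $1+R_i S_i(z)$ is integral. So it increases $v(F(z))$ exactly when $1+R_iS_i(z)\equiv 0\bmod\pi$. In each case I will compute $S_i(z)\bmod\pi$ and then apply Lemma~\ref{lem:Rimodpi}.

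\emph{Cases $z=a_i$ and $z=b_i$.} Here the denominator of $S_i(z)$ factors. For $z=a_i$ one checks
\[2a_i-(a_i+b_i)+2a_ib_i-a_i(a_i+b_i)=(a_i-b_i)(1-a_i),\]
so $S_i(a_i)=1/(1-a_i)\equiv 1\bmod\pi$, since $|a_i|<1$. Hence $1+R_iS_i(a_i)\equiv 1+R_i$, which vanishes mod $\pi$ iff $|a_i|<|b_i|$ by Lemma~\ref{lem:Rimodpi}. Symmetrically, the denominator for $z=b_i$ is $(b_i-a_i)(1-b_i)$, so $S_i(b_i)\equiv -1$ and $1+R_iS_i(b_i)\equiv 1-R_i\not\equiv 0$, again by Lemma~\ref{lem:Rimodpi}.

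\emph{Case $z=b_0$.} This is the step needing a valuation comparison, and I expect it to be the main obstacle. I will rewrite the denominator as
\[-(a_i+b_i)(1+b_0)+2b_0+2a_ib_i.\]
The aim is to show $|2b_0+2a_ib_i|<|a_i+b_i|$, which gives $S_i(b_0)\equiv -R_i$ up to a factor $\equiv 1\bmod\pi$ (and in particular modulo $\pi$).
\begin{itemize}
\item First I show $|a_i+b_i|=|b_i|$. If $|a_i|<|b_i|$ this is clear. If $|a_i|=|b_i|$, Lemma~\ref{lem:ri_ci} gives $|a_i-b_i|\le|a_i+b_i|$; since $p\neq 2$, $|2b_i|=|b_i|\le\max(|a_i+b_i|,|a_i-b_i|)$ forces $|a_i+b_i|=|b_i|$.
\item Kadziela position gives $|b_0|<|a_i|\le|b_i|$, so $|b_0|<|a_i+b_i|$.
\item Also $|a_ib_i|<|b_i|$, since $|a_i|<1$.
\end{itemize}
It follows that $1+R_iS_i(b_0)\equiv 1-R_i^2\bmod\pi$. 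This vanishes iff $R_i\equiv\pm1$, which by Lemma~\ref{lem:Rimodpi} happens iff $|a_i|<|b_i|$.

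Combining the three cases, the factor raises the valuation exactly when $z\in\{b_0,a_i\}$ and $|a_i|<|b_i|$, which is the claim.
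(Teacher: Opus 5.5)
Your proof is correct and takes the same route as the paper: you reduce $1+R_iS_i(z)$ modulo $\pi$ to $(1-R_i)(1+R_i)$, $1+R_i$ and $1-R_i$ for $z=b_0,a_i,b_i$ respectively, and then conclude with Lemma~\ref{lem:Rimodpi}. The paper only says these congruences ``can easily be checked''; your factorizations of the denominators for $z=a_i,b_i$ and your bound $|2b_0+2a_ib_i|<|a_i+b_i|=|b_i|$ for $z=b_0$ (using Lemma~\ref{lem:ri_ci} and $p\neq 2$) correctly supply those omitted details.
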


\begin{proof} One can easily check that
\begin{equation}\label{eq:Li_second_part}
	1 + R_i\cdot S_i(z) 
	\equiv 
	\begin{cases}
		(1 - R_i)(1 + R_i)\mod\pi & \text{if } z = b_0,  \\
		(1 + R_i) \mod\pi & \text{if } z = a_i, \\
		(1 - R_i) \mod\pi & \text{if } z = b_i.
	\end{cases}
\end{equation}
The result follows from Lemma~\ref{lem:Rimodpi}.
\end{proof}

\begin{rk}
The remaining case is more complicated. Let $j$ be an index in $\{1,\dots,g-1\}$ different from $i$. We are unfortunately unable at the moment to determine $1 + R_i\cdot S_i(a_j)$ and $1 + R_i\cdot S_i(b_j)$ modulo $\pi$; the main reason is that there is no strong interaction between the fixed points with indices $i$ and those with indices $j$. 
\end{rk}

 The situation can be made even more precise and completely explicit when the genus $g$ is equal to $2$:

\begin{prop}\label{P:approxg2}
Suppose that $g=2$ and hence $C$ is given by $y^2 = x(x-1)(x-r_0)(x-r_1)(x-r_2)$ where
\[0 < |r_0| < |r_1| \leq |r_2| < 1.\]
\begin{enumerate}
	\item If $|r_1| = |r_2|$, then 
	\[v(b_0) = v(r_0) \ \ \ \text{and} \ \ \ v(a_1) = v(b_1) = v(r_1).\] 
	We have $F(b_0) = r_0$, and we can assume without loss of generality that $F(a_1) = r_1$, $F(b_1) = r_2$. Morever, the first nonzero terms of $F(b_0),F(a_1),F(b_1)$ are as follows:
	\begin{equation}
		F(z) \equiv 
		\begin{cases}
			-4b_0\left(1 - \left(\frac{a_1-b_1}{a_1+b_1}\right)^2\right) \mod\pi^{v(r_0) + 1} & \text{if } z = b_0,  \\
			-2a_1\left(1 + \frac{a_1-b_1}{a_1+b_1}\right) \mod\pi^{v(r_1) + 1} & \text{if } z = a_1, \\
			-2b_1\left(1 - \frac{a_1-b_1}{a_1+b_1}\right) \mod\pi^{v(r_1) + 1} & \text{if } z = b_1.
		\end{cases}
	\end{equation}
	\item If $|r_1| < |r_2|$, then 
	\[v(b_0) = v(r_0) - \frac{v(r_1) - v(r_2)}{2},\ \ \ v(a_1) = \frac{v(r_1) + v(r_2)}{2} \ \ \ \text{and} \ \ \ v(b_1) = v(r_2).\]
	We have $F(b_0) = r_0$, $F(a_1) = r_1$ and $F(b_1) = r_2$. Morever, the first nonzero terms of $F(b_0),F(a_1),F(b_1)$ are as follows:
	\begin{equation}
		F(z) \equiv 
		\begin{cases}
			-16b_0\frac{a_1}{b_1} \mod\pi^{v(r_0) + 1} & \text{if } z = b_0,  \\
			-4\frac{a_1^2}{b_1} \mod\pi^{v(r_1) + 1} & \text{if } z = a_1, \\
			-4b_1 \mod\pi^{v(r_2) + 1} & \text{if } z = b_1.
		\end{cases}
	\end{equation}
\end{enumerate}
\end{prop}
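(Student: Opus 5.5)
The plan is to compute the leading term of $F(z)$ for $z\in\{b_0,a_1,b_1\}$ multiplicatively, using the factorisation $F=\prod_n L_n$ from Theorem~\ref{T:approx}, and then match valuations with the branch points.

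\emph{Step 1: the $p$-adic unit part.} By Proposition~\ref{prop:valoflgamma}, every $L_n(z)$ with $n\ge 2$ is $\equiv 1\bmod\pi$. Also $L_0(z)=z/(z-1)=-z\cdot(\text{unit}\equiv 1\bmod\pi)$. The first factor of $L_1(z)$ is $\equiv 4$ (for $z=b_0$) or $\equiv 2$ (otherwise) modulo $\pi$. Hence, for $g=2$,
\[
F(z)=-c_z\, z\,\bigl(1+R_1S_1(z)\bigr)\,u(z),\qquad u(z)\equiv 1 \bmod \pi ,
\]
with $c_{b_0}=4$ and $c_{a_1}=c_{b_1}=2$. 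So it suffices to find the exact leading term of $1+R_1S_1(z)$. Because the correction $u(z)$ is multiplicative, this gives $F(z)$ modulo $\pi^{v(F(z))+1}$.

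\emph{Step 2: case $|a_1|=|b_1|$.} By Lemma~\ref{lem:Rimodpi}, $R_1\not\equiv\pm1\bmod\pi$, so $1\pm R_1$ and $1-R_1^2$ are units. Then \eqref{eq:Li_second_part} gives the three stated congruences and $v(F(z))=v(z)$ for all three $z$. Kadziela position gives $v(b_0)>v(a_1)=v(b_1)$. Since $F$ maps $\T\setminus\{0,1,\infty\}$ bijectively onto $\{r_0,r_1,r_2\}$ (Proposition~\ref{P:hyp_eqn} with Theorem~\ref{T:approx}), comparing valuations forces $F(b_0)=r_0$ and $\{F(a_1),F(b_1)\}=\{r_1,r_2\}$. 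Swapping $a_1,b_1$, which preserves Kadziela position because $|a_1|=|b_1|$, we may take $F(a_1)=r_1$. In particular this case occurs exactly when $|r_1|=|r_2|$.

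\emph{Step 3: case $|r_1|<|r_2|$.} By Step~2 we must then have $|a_1|\ne|b_1|$, hence $|a_1|<|b_1|$, and so $R_1\equiv-1\bmod\pi$. Now the terms $1+R_1$ and $1+R_1S_1(z)$ are no longer units, so the congruences modulo $\pi$ are insufficient and I would expand them exactly. First, $1+R_1=2a_1/(a_1+b_1)$ has leading term $2a_1/b_1$, while $1-R_1\equiv 2$.

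For $z=a_1$, write $1+R_1S_1(a_1)$ over the common denominator $(a_1+b_1)(2a_1-(a_1+b_1)+2a_1b_1-a_1(a_1+b_1))$. The denominator has leading term $-b_1^2$. I expect the numerator to have leading term $-2a_1b_1$, giving the factor $2a_1/b_1$ and hence $F(a_1)\equiv -4a_1^2/b_1$. For $z=b_0$, the same computation should give $(1-R_1)(1+R_1)\sim 4a_1/b_1$, so $F(b_0)\equiv -16b_0a_1/b_1$. For $z=b_1$, the factor is $1-R_1\sim 2$, so $F(b_1)\equiv-4b_1$.

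This exact expansion is the main obstacle. One must check that the cancellation in the numerator is exactly of order $a_1/b_1$, and that terms such as $2a_1b_1$ and $z(a_1+b_1)$ in the denominator of $S_1$ do not interfere. I would do this by factoring the numerator symbolically, or by bounding each neglected term relative to $|a_1/b_1|$.

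\emph{Step 4: matching valuations.} The resulting valuations are
\[
v(F(b_1))=v(b_1),\qquad v(F(a_1))=2v(a_1)-v(b_1),\qquad v(F(b_0))=v(b_0)+v(a_1)-v(b_1).
\]
These are strictly increasing in this order, using $v(b_0)>v(a_1)>v(b_1)$. Matching them with $v(r_2)<v(r_1)<v(r_0)$ gives $F(b_1)=r_2$, $F(a_1)=r_1$ and $F(b_0)=r_0$. Solving the linear system then yields the stated formulas for $v(b_0)$, $v(a_1)$ and $v(b_1)$.
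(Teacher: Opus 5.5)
Your proposal is correct and follows the paper's proof. Both use the same factorization of $F(z)$ into $L_0$, the unit part of $L_1$, the factor $1+R_1S_1(z)$ and $\prod_{n\ge 2}L_n\equiv 1 \bmod \pi$, the same dichotomy according to whether $R_1\equiv -1 \bmod \pi$, and the same refined expansion to precision $\pi^{v+1}$ with $v=v(a_1)-v(b_1)$ when $|a_1|<|b_1|$, followed by matching valuations with the branch points. The check you leave open for $z=b_0$ is exactly the paper's step $S_1(b_0)\equiv -R_1 \bmod \pi^{v+1}$: after factoring $-(a_1+b_1)$ out of the denominator of $S_1(b_0)$, the remaining corrections $2a_1b_1/(a_1+b_1)$ and $b_0(2-a_1-b_1)/(a_1+b_1)$ have valuations $v(a_1)$ and $v(b_0)-v(b_1)$, both $>v$ since $v(b_1)\ge 1$ and $|b_0|<|a_1|$, so $1+R_1S_1(b_0)\equiv 1-R_1^2\equiv 4a_1/b_1 \bmod \pi^{v+1}$.
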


\begin{proof} Take a fixed point $z\in\{b_0,a_1,b_1\}$, and write
\[L_1(z) = \prnths{\frac{b_0 - 2}{b_0 - 2 + \frac{b_0}{z}}\frac{z-2}{z-1}}\ \big(1 +  R_1\cdot S_1(z)\big)\]
where $R_1$ and $S_1(z)$ are in Equation~\eqref{eq:Ri_and_Si}.
The first part is congruent to $4$ (resp. $2$) if $z = b_0$ (resp. $z = a_1, b_1$) modulo $\pi$. On the other hand, as in Equation~\eqref{eq:Li_second_part}, we have
\begin{equation}\label{eq:L1_second_part}
1 + R_1\cdot S_1(z) 
\equiv 
\begin{cases}
	 (1 - R_1)(1 + R_1)\mod\pi & \text{if } z = b_0,  \\
	 (1 + R_1) \mod\pi & \text{if } z = a_1, \\
	 (1 - R_1) \mod\pi & \text{if } z = b_1.
\end{cases}
\end{equation}

Therefore, we always have $|F(b_0)| < |F(a_1)| \leq |F(b_1)|$. In particular, $F(b_0) = r_0$ and $\{F(a_1),F(b_1)\} = \{r_1,r_2\}$.

\begin{enumerate}
	
	\item Assume $|r_1| = |r_2|$. This implies $|a_1| = |b_1|$, otherwise the absolute values of $F(a_1)$ and $F(b_1)$ would be different. Then we may assume, without loss of generality, that $F(a_1) = r_1$ and $F(b_1) = r_2$. Moreover, $1 + R_1\cdot S_1(z)$ can not be zero modulo $\pi$ for $z\in\{b_0,a_1,b_1\}$. As a result, the first nonzero terms of $F(b_0),F(a_1),F(b_1)$ are given by
	\[F(z) \equiv 
	\begin{cases}
		-4b_0(1 - R_1^2) \mod\pi^{v(r_0) + 1} & \text{if } z = b_0,  \\
		-2a_1(1 + R_1) \mod\pi^{v(r_1) + 1} & \text{if } z = a_1, \\
		-2b_1(1 - R_1) \mod\pi^{v(r_1) + 1} & \text{if } z = b_1.
	\end{cases}\]
	From this, we see that $v(b_0) = v(r_0)$ and $v(a_1) = v(b_1) = v(r_1)$.
	
	\item Assume $|r_1| < |r_2|$. This implies $|a_1| < |b_1|$, otherwise the
    absolute values of $F(a_1)$ and $F(b_1)$ would be the same. We then
    have $F(a_1) = r_1$ and $F(b_1) = r_2$. Moreover, $1 + R_1\cdot S_1(z)$ becomes zero modulo $\pi$
    for $z\in\{b_0,a_1\}$. We now determine its first nonzero term. Writing
    $v\colonequals v(a_1) - v(b_1) > 0$, we see that
	\begin{align*}
		R_1 &= \frac{\frac{a_1}{b_1} - 1}{\frac{a_1}{b_1} + 1} 
		=\left(\frac{a_1}{b_1} - 1\right)\left(1 - \frac{a_1}{b_1} + \frac{a_1^2}{b_1^2} - \cdots\right)
		= -1 + 2\frac{a_1}{b_1} - 2\frac{a_1^2}{b_1^2} + \cdots \equiv -1 + 2\frac{a_1}{b_1} \mod\pi^{v + 1}, \\
		S_1(b_0)
		&= -\frac{a_1 - b_1}{a_1+b_1 - 2a_1b_1 + b_0(-2 + (a_1+b_1))} \\
		&= -\frac{a_1 - b_1}{(a_1+b_1)\left(1 - \frac{2a_1b_1 + b_0(2 - (a_1+b_1))}{a_1+b_1}\right)} \equiv -R_1 \equiv 1 - 2\frac{a_1}{b_1} \mod\pi^{v + 1}, \\
		S_1(a_1) &= \frac{a_1 - b_1}{a_1 - b_1 + 2a_1b_1 - a_1(a_1+b_1)}
		= \frac{1}{1 - a_1} \equiv 1 \mod\pi^{v + 1}.
	\end{align*}
	Therefore, the first nonzero term of $1 + R_1\cdot S_1(z)$ for $z = b_0$ (resp. $z = a_1$) is $4\frac{a_1}{b_1}$ (resp. $2\frac{a_1}{b_1}$). As a result, the first nonzero terms of $F(b_0),F(a_1),F(b_1)$ are given by
	\[F(z) \equiv 
	\begin{cases}
		-16b_0\frac{a_1}{b_1} \mod\pi^{v(r_0) + 1} & \text{if } z = b_0,  \\
		-4\frac{a_1^2}{b_1} \mod\pi^{v(r_1) + 1} & \text{if } z = a_1, \\
		-4b_1 \mod\pi^{v(r_2) + 1} & \text{if } z = b_1.
	\end{cases}\]
	From this, we easily see that $v(b_0) = v(r_0) - \frac{v(r_1) - v(r_2)}{2}, v(a_1) = \frac{v(r_1) + v(r_2)}{2}$ and $v(b_1) = v(r_2)$. \qedhere
\end{enumerate}
	
\end{proof}

\begin{rk}\label{R:}
  Unfortunately, the results in this subsection 
  are conditional on the position of $\T$. In practice, we only have the
  branch set $\B_C$ a priori available, because we use these results
  \textit{to compute $\T$ from $\B_C$}.
  In general we cannot detect whether the approximation
  results in~\S\ref{S:Initial} apply from looking only at the branch points; see
  Example~\ref{E:KadBad}.
  However, we can determine the position of $\T$ from a crude approximation
  of $\T$, and this suffices in practice.
\end{rk}

\begin{rk}\label{R:vdPTapprox}
The relation between the fixed points and the branch points is discussed in
great detail in~\cite{vdPT26}. In particular, they prove approximation
  results that should make it possible to compute fixed points from branch
  points in greater generality, but using a different normalization from
  ours.
\end{rk}
\subsection{Hensel lifting}\label{S:Hensel}
Recall that the goal is to compute the fixed points of the generators of
the group $\Gamma$ as preimages of the branch points under the theta
function $F=F_{0,1}$. This function itself depends on the fixed points,
which we rename for convenience as
\begin{equation}\label{newT}
  \T = \{a_0=0,b_0,a_1,\ldots,a_{g-1},b_{g-1},a_g=1,b_g=\infty\}\equalscolon
\{0,t_1,\ldots,t_{2g-1},1, \infty\}\,.
\end{equation}
Recall our generators $s_0,\ldots,s_g$ of $\Gamma$, where 
\begin{equation}\label{si}
  s_0 = 
\begin{pmatrix}
t_1 & 0 \\
2 & -t_1
\end{pmatrix}, \ \ \ 
  s_i = 
\begin{pmatrix}
  t_{2i}+t_{2i+1} & -2t_{2i}t_{2i+1} \\
  2 & -(t_{2i}+t_{2i+1})
\end{pmatrix}, \	 i=1,\dots,g-1; \ \ \ 
  s_{g} = 
\begin{pmatrix}
1 & -2 \\
0 & -1
\end{pmatrix}\,;
\end{equation}
we also set $s_{g+1}=I_2$\,.
We now explain how to compute the $t_i$ from the branch points by Hensel lifting. 
We treat $\vec{t} =  (t_1,\ldots,t_{2g-1})$ as a vector of variables; then
for $i\in \{0,\ldots,g+1\}$ the matrix $s_i(\vec{t})$ is given
by~\eqref{si} and~\eqref{newT}; we also write $\Gamma(\vec{t})\colonequals
\langle s_0(\vec{t}),\ldots,s_g(\vec{t})\rangle$. We assume that we have a
first-order approximation $\vec{t}^{(0)}$ to $\vec{t}$, satisfying
\begin{itemize}
  \item $t_j-t^{(0)}_j\in \m$,
  \item $F(t^{(0)})-r_j \in \m$
\end{itemize}
for $j\in \{1,\ldots,2g-1\}$. This holds, for instance, for the
approximations discussed in the previous subsection.

Then we consider the following function
\begin{equation}\label{}
  G=(G_1,\ldots,G_{2g-1})\colon \m^{2g-1}\to \m^{2g-1}\,;\quad 
  G_j(\vec{v})\colonequals
  \bar{G}(v_j,\vec{v})-r_j\,,
\end{equation}
where 
\begin{equation}\label{}
  \bar{G}(z,\vec v) 
\colonequals
  \prod_{\gamma \in \Gamma(\vec{t}^{(0)} + \vec{v})}\frac{z-\gamma0}{z-\gamma1}\,.
\end{equation}
We then have to approximate a root $\vec{v}$ of $G$. The domain and
codomain of $G$ are both compact.

In order to use a suitable version of Hensel's Lemma, we would need to compute the
Jacobian matrix of $G$. Since the dependency of $G$ on $\vec v$ is quite complicated we
have no way of doing this in practice. Instead, we will use a very coarse approximation
to the Jacobian matrix, by only taking into account words of length zero and one in the
product defining $\bar G$.

The modified Hensel lifting algorithm uses the recursion
\[
\vec v_{n+1} = \vec v_n - T^{-1} G(\vec v_n),
\]
where $T$ is the approximated Jacobian matrix of $G$ at $\vec v_0=0$. Under suitable conditions (see~\ref{thm:hensel})
this recursion converges to a root of $G$ in linear time (instead of the usual quadratic convergence 
of Hensel's Lemma).

The following lemma, whose proof we leave to the reader, is a version of Taylor's theorem
for power series with coefficients in a non-archimedean local ring.

\begin{lemma}\label{L:Taylor1}
  Let $R$ be a local ring with a non-archimedean absolute value $|\cdot|$. If $f\colon R^n\to R^n$ is a function given by power series in $R[[x_1,\ldots,x_n]]$, then
  \[
  f(x+h) = f(x) + J_f(x)h + R(x,h),
  \]
  with $|R(x,h)| \leq |h|^2$.
\end{lemma}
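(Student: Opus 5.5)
The plan is to reduce to the standard one-variable and multivariable Taylor expansion of convergent power series, coordinatewise. Write $f=(f_1,\ldots,f_n)$ with each $f_k=\sum_{\alpha}c_{k,\alpha}x^\alpha$, $c_{k,\alpha}\in R$. The implicit hypotheses are that the series converge on the relevant domain and that $x,h$ lie in $R^n$ with $|h|\le 1$. I will use the sup norm $|h|=\max_i|h_i|$ on $R^n$. Since the absolute value is non-archimedean, it suffices to prove the estimate for each monomial $x^\alpha$ separately, and then for each coordinate $f_k$. Indeed, the remainder of $f_k$ is the (convergent) sum of the monomial remainders weighted by $c_{k,\alpha}$, and $|c_{k,\alpha}|\le 1$. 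The ultrametric inequality, passed to the limit, then bounds $|R_k(x,h)|$ by the supremum of the monomial bounds.

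For a single monomial $x^\alpha$ I will expand $(x+h)^\alpha=\prod_i (x_i+h_i)^{\alpha_i}$ by the binomial theorem. This gives a polynomial identity with integer coefficients:
\[
(x+h)^\alpha = x^\alpha + \sum_i \alpha_i x^{\alpha-e_i}h_i + \sum_{|\beta|\ge 2}\binom{\alpha}{\beta}x^{\alpha-\beta}h^\beta .
\]
The linear term is exactly the $\alpha$-component of $J_f(x)h$. Each remaining term has integer coefficient, so its absolute value is at most $1$, and $|x|\le 1$. Hence each remaining term has absolute value at most $|h^\beta|\le |h|^{|\beta|}\le |h|^2$, using $|\beta|\ge2$ and $|h|\le1$. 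By the ultrametric inequality the whole degree-$\ge2$ part is bounded by $|h|^2$.

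Summing over $\alpha$ requires rearranging a convergent double series. The rearrangement is legitimate because $R$ is complete (or the series are assumed to converge) and the terms tend to $0$. Carrying it out identifies $\sum_\alpha c_{k,\alpha}\sum_i\alpha_i x^{\alpha-e_i}h_i$ with $(J_f(x)h)_k$, since termwise differentiation of convergent power series is valid here. The remainders then sum to a series all of whose terms have absolute value at most $|h|^2$. This gives $|R(x,h)|\le|h|^2$.

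The main obstacle is the analytic bookkeeping rather than the algebra. One has to justify the convergence and the rearrangement of the double series, and the fact that $J_f$ is computed by termwise differentiation. I would handle this by restricting to the setting where the lemma will be applied: $R=\mathcal O$ (or $\mathcal O_{\C_p}$), with $x,h\in\mathfrak m^n$ or $|x|,|h|\le1$ and coefficients tending to $0$. In that setting all these sums converge absolutely in the ultrametric sense, so any reordering is permitted.
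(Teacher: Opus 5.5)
Your proof is correct. The paper gives no argument here; it explicitly leaves the proof to the reader. Your monomial-by-monomial binomial expansion is the standard proof, and presumably the one the authors have in mind.

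The side conditions you made explicit are genuinely needed. The lemma must be read with $|r|\le 1$ for all $r\in R$, as for $R=\O$. For instance, $R=K$ is also a local ring, and for $f(x)=\pi^{-1}x^2$ the remainder is $\pi^{-1}h^2$, whose absolute value $|\pi|^{-1}|h|^2$ exceeds $|h|^2$. Similar monomial examples, such as $f(x)=x^3$ at $x=0$ with $|h|>1$, show that $|x|\le 1$ and $|h|\le 1$ cannot be dropped. This is also the setting of Theorem~\ref{thm:hensel}: there the identity $|T^{-1}f(x_n)|=|f(x_n)|$ for $T\in\GL_n(R)$ likewise requires $|\cdot|\le 1$ on $R$.

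The rearrangement you flag as the main obstacle can in fact be avoided. Each of $f_k(x+h)$, $f_k(x)$ and
\[
(J_f(x)h)_k=\sum_\alpha c_{k,\alpha}\sum_i\alpha_i x^{\alpha-e_i}h_i
\]
is a single convergent series indexed by $\alpha$, since the sum over $i$ is finite. Hence $R_k(x,h)$ is the termwise difference
\[
R_k(x,h)=\sum_\alpha c_{k,\alpha}\bigl[(x+h)^\alpha-x^\alpha-\textstyle\sum_i\alpha_i x^{\alpha-e_i}h_i\bigr].
\]
The binomial expansion is applied only inside each finite bracket, so no double series needs reordering. The bound $|R_k(x,h)|\le|h|^2$ then passes to the limit because the ball $\{y : |y|\le |h|^2\}$ is closed.
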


We have not been able to find the following result in the literature, so we include it and a proof for completeness.

\begin{thm}
  \label{thm:hensel}
  Let $R$ be a local ring with a non-archimedean absolute value $|\cdot|$, and suppose $f\colon R^n\to R^n$ is a
  function defined by power series with coefficients in $R$ which converge on $R^n$.
  Fix $x_0 \in R^n$ and $T \in \GL_n(R)$ a matrix, and suppose that for
  some real number $0\le \lambda <1$ have
  \begin{enumerate}
    \item $|f(x_0)| \leq \lambda$.
    \item $|T - J_f(x_0)| \leq \lambda$, where $J_f$ is the Jacobian matrix of $f$,
  \end{enumerate}
  Consider the sequence $(x_n)$ defined by
  \[
   x_{n+1} = x_n - T^{-1} f(x_n).
  \]
  Then we have, for all $n\geq 0$:
  \begin{enumerate}
    \item $|f(x_n)| \leq \lambda^{n+1}$,\text{ and}
    \item $|x_n - x_0| < 1$.
  \end{enumerate}
  In particular, the sequence $(x_n)$ converges to a root of $f$.
\end{thm}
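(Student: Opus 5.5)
We prove the two claims simultaneously by induction on $n$, using the Taylor expansion of Lemma~\ref{L:Taylor1} around $x_0$ rather than around $x_n$. Throughout, $|\cdot|$ on vectors and matrices is the sup norm. Since $T\in\GL_n(R)$ has entries in $R$ and $T^{-1}$ also has entries in $R$, we have $|T^{-1}v|\le |v|$ for every $v\in R^n$.

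For $n=0$ both claims are immediate: $|f(x_0)|\le\lambda$ by hypothesis, and $|x_0-x_0|=0<1$.

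Now assume $|f(x_k)|\le\lambda^{k+1}$ for all $k\le n$. Then each step satisfies $|x_{k+1}-x_k|=|T^{-1}f(x_k)|\le\lambda^{k+1}$. By the ultrametric inequality, $h_n\colonequals x_n-x_0$ satisfies $|h_n|\le\lambda<1$, which gives the second claim. It also gives the finer bound $|h_{n+1}-h_n|\le\lambda^{n+1}$.

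For the first claim we write $f(x_{n+1})=f(x_n)+\bigl(f(x_{n+1})-f(x_n)\bigr)$ and expand the difference. Taking the Taylor expansion at $x_n$ directly gives
\[
f(x_{n+1})=f(x_n)-J_f(x_n)T^{-1}f(x_n)+R(x_n,-T^{-1}f(x_n)),
\]
which equals
\[
\bigl(T-J_f(x_n)\bigr)T^{-1}f(x_n)+R.
\]
The remainder satisfies $|R|\le|T^{-1}f(x_n)|^2\le\lambda^{2n+2}\le\lambda^{n+2}$.

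It remains to bound $|T-J_f(x_n)|\le\lambda$. We use
\[
|T-J_f(x_n)|\le\max\bigl(|T-J_f(x_0)|,\,|J_f(x_n)-J_f(x_0)|\bigr).
\]
The first term is at most $\lambda$ by hypothesis. For the second, the entries of $J_f$ are again power series with coefficients in $R$, so each entry $g$ satisfies $|g(x_0+h)-g(x_0)|\le|h|$; this follows from Lemma~\ref{L:Taylor1} or directly from the coefficients lying in $R$. Hence $|J_f(x_n)-J_f(x_0)|\le|h_n|\le\lambda$. Combining,
\[
|f(x_{n+1})|\le\max\bigl(\lambda\cdot\lambda^{n+1},\,\lambda^{n+2}\bigr)=\lambda^{n+2},
\]
which completes the induction.

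\textbf{Main obstacle.} The delicate step is the Lipschitz estimate for $J_f$ with coefficients in $R$, that is, that derivatives of power series over $R$ again have coefficients in $R$ and are $1$-Lipschitz on $R^n$. This is the same kind of estimate as in Lemma~\ref{L:Taylor1}, and it is exactly what makes only linear convergence available: the approximation error in $T$ never shrinks below $\lambda$.

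Finally, the steps satisfy $|x_{n+1}-x_n|\le\lambda^{n+1}\to0$, so $(x_n)$ is Cauchy in the non-archimedean sense. Assuming $R$ is complete (as it is for $\O$ or $\m$ in the application), the sequence converges to some $x_\infty$ with $|x_\infty-x_0|<1$. Continuity of the convergent power series $f$ then gives $f(x_\infty)=\lim f(x_n)=0$.
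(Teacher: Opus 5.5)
Your proof is correct and follows essentially the paper's argument: induction on $n$, a Taylor expansion of $f$ at $x_n$ giving $f(x_{n+1})=(T-J_f(x_n))T^{-1}f(x_n)+R$, and the $1$-Lipschitz property of $J_f$ to keep $|T-J_f(x_n)|\le\lambda$. The differences are cosmetic. You bound $|J_f(x_n)-J_f(x_0)|\le|x_n-x_0|\le\lambda$ directly, whereas the paper carries $|T-J_f(x_n)|\le\lambda$ as a third inductive hypothesis and compares consecutive iterates; note that your opening sentence says you expand around $x_0$, but you in fact expand $f$ around $x_n$. You also make explicit that $R$ must be complete for the final limit, which the paper leaves implicit.
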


\begin{proof}
We will prove by induction on $n$ that
\begin{enumerate}
    \item $|f(x_n)| \leq \lambda^{n+1}$,
    \item $|x_n - x_0| < 1$, and
    \item $|T - J_f(x_n)| \leq \lambda$.
\end{enumerate}
The base case $n=0$ is trivial. Suppose that the result holds for some $n\geq 0$. Write $h_n =
  -T^{-1}f(x_n)$. By Lemma~\ref{L:Taylor1} we have
\begin{align*}
  f(x_{n+1})  &= f(x_n - T^{-1}f(x_n)) = f(x_n) - J_f(x_n)T^{-1}f(x_n) + R(x_n,h_n)\\
  &=(T-J_f(x_n))T^{-1}f(x_n) + R(x_n, h_n).
\end{align*}
Taking absolute values we get
\[
|f(x_{n+1})| \leq \max \{|T-J_f(x_n)| |f(x_n)|, |R(x_n, h_n)|\}.
\]
Note that $|T-J_f(x_n)| \leq \lambda$ by the third induction hypothesis, and $|R(x_n,h_n)| < |h_n|^2 \leq \lambda^{2n+2}$.
Therefore by our induction hypothesis $|f(x_{n+1})| \leq \lambda^{n+2}$, as desired.

Note also that
\[
|x_{n+1} - x_0| \leq \max\{|x_n - x_0|, |x_{n+1}-x_n|\}.
\]
By the induction hypothesis, $|x_n - x_0| < 1$. Moreover, we have $|x_{n+1} - x_n| = |T^{-1}f(x_n)| = |f(x_n)| \leq \lambda^{n+1} < 1$,
and thus $|x_{n+1} - x_0| < 1$ as well.

Finally, to check the third induction hypothesis, we write
\[
|T - J_f(x_{n+1})| \leq \max\{|T - J_f(x_n)|, |J_f(x_n) - J_f(x_{n+1})|\}.
\]
By the induction hypothesis, $|T - J_f(x_n)| \leq \lambda$. Since $J_f$
is given by power series converging on $R^n$, we have $|J_f(x_n) - J_f(x_{n+1})| \leq |x_n - x_{n+1}| < \lambda^{n+1}$ as
observed before, 
and thus $|T - J_f(x_{n+1})| \leq \lambda$ as well.
\end{proof}

In practice we have no good way of testing the hypotheses of the above result for the function $G$.
So what we do is to apply
the Hensel lifting iteration and see if it converges in at most a number of steps equal to the working precision. We have never
observed a case where this procedure fails, and in any case a posteriori we can check that the image of the fixed points coincides
with the branch points up to the desired precision, independently of any assumptions on the Jacobian. After a successful run
we will have obtained a Schottky group for which the theta function $F=F_{0,1}$ maps the fixed points to the branch points.
Whether the group is actually the uniformizing group for the curve can be
checked using the results of Section~\ref{S:Initial}, whenever the fixed
points are in weak Kadziela position. It would be interesting to extend
this using the approximation results of~\cite{vdPT26}; see Remark~\ref{R:vdPTapprox}.

\begin{rk}\label{R:}
We assumed that the branch points and fixed points both include $0,1,\infty$ for convenience and
  since this is what we assume in our algorithm. If
we do not make this assumption, the only difference is that we have to
consider $2g+2$ fixed points and $2g+2$ branch points and that we
  have to work with a more general theta function $F$.
\end{rk}


\section{Abelian logarithms and integration}\label{S:abelian}

The ($p$-adic) Abelian integral serves as a $p$-adic analogue of the
classical (real-valued) line integral. It has  numerous applications in
arithmetic and Diophantine geometry such as determining rational points on
curves~\cite{Col85Chab,MP12} and obtaining uniform bounds for their number~\cite{Sto19,KRBZ16}

Its existence is quite surprising from the point of view of classical
analysis. Namely, one can integrate differential forms locally in the
$p$-adic world, but the fact that the $p$-adic topology is totally
disconnected makes the naive analytic continuation impossible. In fact,
abelian integration on a curve is defined via the $p$-adic Lie theory of
its Jacobian. Here is a brief review of the construction, due to
Zarhin~\cite{Zar96}.

\subsection{Abelian logarithm}

Let $A/K$ be an abelian variety.
We fix an embedding $K\hookrightarrow \C_p$. Recall that every regular $1$-form on $A$ is translation-invariant; that is, $\Omega^1(A) = \Omega^1_{\inv}(A)$.
This allows us to identify naturally $\Omega^1(A)$ with the dual of
$\Lie(A)$, the Lie algebra of $A$. 
The abelian logarithm is the logarithm of the $p$-adic Lie group $A(\C_p)$,
see for instance~\cite{Zar96}.  
\begin{prop}\label{P:Zarhin}
  There is a unique homomorphism of $\Cp$-Lie groups 
  \[\log_A\colonequals \log_{A(\Cp)}\colon  A(\Cp)\to\Lie(A_{\C_p})\cong
  \Omega^1(A_{\C_p})^\vee\] 
whose linearization
  \[d\log_{A} \colon \Lie(A_{\C_p}) \to \Lie(\Lie(A_{\C_p})) = \Lie(A_{\C_p})\]
is the identity.
  On the subgroup $A^1(\C_p)\subset A(\C_p)$ consisting of all points reducing to the
  origin, $\log_{A}$ is given by integrating
  expansions of holomorphic differentials into convergent power series. 
  Finally, we have
  $\log(A(K))\subset \Lie(A)\subset \Lie(A_{\C_p})$. 
\end{prop}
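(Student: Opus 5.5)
We need to prove Proposition~\ref{P:Zarhin}: existence and uniqueness of the abelian logarithm $\log_A\colon A(\C_p)\to\Lie(A_{\C_p})$, its description on $A^1(\C_p)$, and rationality over $K$. The plan is to build the logarithm first on a small open subgroup via a formal group, and then extend it to all of $A(\C_p)$ using that the quotient is torsion.

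\textbf{Step 1: the formal group.} Choose local parameters $x_1,\ldots,x_d$ at the origin, $d=\dim A$. The completed local ring at $0$ gives a commutative formal group law $\mathcal{A}(X,Y)\in K[[X,Y]]^d$. After rescaling the parameters by a power of $\pi$, the coefficients lie in $\O$. Let $\omega_1,\ldots,\omega_d$ be a basis of $\Omega^1_{\inv}(A)=\Omega^1(A)$. Expand each $\omega_i$ in $K[[X]]dX$ and integrate termwise to get $\lambda_i(X)$. The tuple $\lambda=(\lambda_1,\ldots,\lambda_d)$ is then the formal logarithm: invariance of $\omega_i$ gives $\lambda(\mathcal{A}(X,Y))=\lambda(X)+\lambda(Y)$. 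Since $\omega_i$ has integral expansion, the coefficients of $\lambda_i$ have denominators bounded by the degree. Hence $\lambda$ converges on the open polydisc $\m_{\C_p}^d$, which is identified with a subgroup $A^1(\C_p)$ (shrinking it if needed). This gives a homomorphism $A^1(\C_p)\to\C_p^d\cong\Lie(A_{\C_p})$, using the dual basis to $\omega_i$. Its linear term is the identity, because $\omega_i=dx_i+\ldots$ up to the chosen basis.

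\textbf{Step 2: extension to $A(\C_p)$.} The key fact is that $A(\C_p)/A^1(\C_p)$ is torsion. Every point of $A(\C_p)$ is defined over some finite extension $L/K$. The group $A(L)$ is a compact $p$-adic Lie group, and $A^1(L)$ is an open subgroup, so it has finite index. Thus for every $P$ there is $m\ge1$ with $mP\in A^1$. Define $\log_A(P)\colonequals \frac1m\lambda(mP)$. This is independent of $m$, because $\lambda$ is a homomorphism on $A^1$. It is a homomorphism because $\Lie(A_{\C_p})$ is uniquely divisible. It is locally analytic and agrees with $\lambda$ near $0$, so its differential is the identity.

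\textbf{Step 3: uniqueness and rationality.} For uniqueness, let $\ell$ be another such homomorphism. Then $\ell-\lambda$ is a locally analytic homomorphism $A^1\to\Lie$ with zero differential at $0$. A locally analytic homomorphism is determined near $0$ by its differential, via $\ell(nP)=n\ell(P)$ and analyticity in a small ball, so $\ell=\lambda$ near $0$. Since $\Lie(A_{\C_p})$ is torsion-free, the torsion argument then forces $\ell=\log_A$ everywhere. For rationality: when $P\in A(K)$, the multiple $mP$ lies in $A^1(K)$, and $\lambda$ has coefficients in $K$, so $\log_A(P)\in\Lie(A)$.

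\textbf{Main obstacle.} I expect the hardest step to be the convergence and group-structure bookkeeping in Step 1. One must show that $A^1(\C_p)$, defined via reduction, coincides with the formal group points on which $\lambda$ converges and is a homomorphism. Alternatively one can work with a smaller open subgroup and absorb the difference into the torsion argument of Step 2, which is what I would do first.
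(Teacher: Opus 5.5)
Your strategy is the standard one behind the paper's citation. The paper gives no proof of Proposition~\ref{P:Zarhin}: it refers to Zarhin~\cite{Zar96}. His criterion is exactly that a commutative Lie group over $\C_p$ with an open subgroup of torsion index has a unique logarithm. That is what your three steps aim at: a formal logarithm on a small open subgroup, extension using a torsion quotient, and uniqueness from rigidity of analytic homomorphisms. However, Step~2 as written does not establish that criterion for $A(\C_p)$.

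The sentence ``every point of $A(\C_p)$ is defined over some finite extension $L/K$'' is false. $\C_p$ is not algebraic over $K$, and $A(\C_p)$ contains many points outside $A(\overline{K})$, for instance points near the origin whose formal coordinates are transcendental over $K$. Your compactness argument therefore only shows that $A(\overline{K})$ is torsion modulo your open subgroup $U$.

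The missing ingredient is density of $A(\overline{K})$ in $A(\C_p)$. Since $A$ is smooth, it is locally \'etale over affine $d$-space. Approximate the image of a point by a $\overline{K}$-point, which is possible because $\overline{K}$ is dense in $\C_p$. The analytic inverse function theorem then gives a nearby point of $A(\C_p)$ over it, and that point is algebraic over $K$. Now, given $P\in A(\C_p)$, choose $Q\in A(\overline{K})$ with $P-Q\in U$. Then $Q\in A(L)$ for some finite $L/K$, so compactness of $A(L)$ gives $mQ\in U$ for some $m\ge 1$, hence $mP\in U$.

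You need the same fix in Step~3. There you only show that $\ell$ and $\log_A$ agree on some open subgroup: the set where two homomorphisms agree is a subgroup, and it contains a neighbourhood of $0$. That subgroup may be smaller than the one from Step~1, and the density argument is what shows $A(\C_p)$ is torsion modulo any open subgroup. Your rigidity argument itself, comparing lowest-degree terms in $\ell(2P)=2\ell(P)$, is fine.

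A secondary point: if you shrink the subgroup in Step~1, you have not yet proved the second assertion of the proposition. That assertion says that on all of $A^1(\C_p)$, the points reducing to the origin, $\log_A$ is the termwise integral of the expansions of the differentials. For this you should work with a formal group whose points are exactly those points, for instance the formal completion along the zero section of the smooth $\O$-model used to define reduction. Its normalized invariant differentials have integral expansions, so $\lambda$ converges on all of its points, and there $\frac{1}{m}\lambda(mP)=\lambda(P)$. Your rescaled formal group law only gives this on a possibly smaller polydisc.
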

We call $\log_{A}$ the \textit{abelian logarithm} on $A$. 

\subsection{Abelian integration} The \textit{abelian integral} on $A$ is
defined as follows. For $\omega\in\Omega^1(A_{\C_p})$ and $P,Q\in A(\Cp)$, we set
\[ \presuper\Ab\int_P^Q\omega = \angles{\log_{A}(Q),\,\omega} - \angles{\log_{A}(P),\,\omega}\in \Cp\]
where $\angles{\cdot,\cdot}$ is the pairing between $\Lie(A)$ and
$\Omega^1(A)$. This integral enjoys a number of natural properties; for
example, it is additive in endpoints, linear in the integrand and
functorial under homomorphisms of abelian varieties; we have  
\begin{equation}\label{}
\presuper\Ab\int_P^Q\omega \in K
\end{equation}
for $P,Q\in A(K)$ and $\omega \in \Omega^1(A)$.

\subsection{Abelian integration on curves} Let $C/K$ be a nice curve with
Jacobian variety $J$. Let $j\colon
C\xhookrightarrow{} J$ be the Abel--Jacobi embedding relative to a fixed
base point $P_0\in C(K)$. It induces an isomorphism $j^*\colon
\Omega^1(J_{\C_p})\xrightarrow{\sim} \Omega^1(C_{\C_p})$ which restricts to
an isomorphism $j^*\colon
\Omega^1(J)\xrightarrow{\sim} \Omega^1(C)$ and is independent of the
chosen base point. For $\omega\in\Omega^1(C_{\C_p})$ and $P,Q\in C(\C_p)$, the
\textit{abelian integral} of $\omega$ from $P$ to $Q$ is defined as
\[ \presuper\Ab\int_P^Q\omega =
\presuper\Ab\int_{j(P)}^{j(Q)}(j^*)^{-1}\omega \in \C_p \,.\]
Once again, when all the data, namely $C$, $\omega$, $P$ and $Q$, are
defined over $K$, then 
$\presuper\lAb\int_P^Q\omega\in K$.

\begin{rk}\label{rem:TorsionVanishing}
	Because $\Lie(J)$ is torsion free, if $P,Q$ are points on $C$ with the
  property that $Q-P$ represents a torsion point, then
	\[\Abint_P^Q \omega = 0 \text{ for all } \omega\in \Omega^1(C).\]
	We will observe this vanishing numerically to test the correctness of our algorithm. 
\end{rk}

The abelian integral extends linearly to
an integral $\presuper\lAb\int_D$, where $D$ is a  divisor on $C$ of degree~0; this
extension is well-defined on $J$ by
construction.

\subsection{Existing algorithms for abelian integration on curves}
Abelian integration on curves of good reduction is the same as Coleman
integration \cite{Col85}. Such integrals can be explicitly computed using
the algorithms developed by Balakrishnan--Bradshaw--Kedlaya~\cite{BBK10},
Best~\cite{Bes21} and Balakrishnan--Tuitman \cite{BT20}. 
All these algorithms are restricted to the case where the base field is $\Q_p$ or a totally ramified extension of it. A generalization of~\cite{BT20} to curves over general $p$-adic fields is the subject of forthcoming work due to Cai and Keller.

In order to carry out abelian integration on hyperelliptic curves of bad
reduction, one can use the algorithms developed by Katz and Kaya \cite{KK22}; see also \cite{Kay22}. In these algorithms, the base field can be any $p$-adic field.

\subsection{Abelian logarithms and integrals on Jacobians of Mumford
curves}\label{LogMum}
Now let $C/K$ be a Mumford curve with Jacobian variety $J$. We show that the abelian logarithm can be expressed on the
uniformization $(K^\times)^g \to (K^\times)^g/Q_W\to J^{\an}(K) \simeq J(K)$ (see~\eqref{D1}).

We fix a base point $P_0\in C(K)$ such that $\infty \in \Omega_W(K)$ lifts
$P_0$ and obtain an isomorphism 
\begin{equation}\label{}
  \Lie(J)\simeq \Omega^1(J)^\vee\simeq \Omega^1(C)^\vee\simeq K^g\,, 
\end{equation}
where the final isomorphism is induced by a choice of basis of
$\Omega^1(C)^\vee$. We choose the basis $(\eta_1,\ldots,\eta_g)$
such that $\eta_i$ pulls back to the rigid analytic differential $\dlog
u_i$ on $\Omega_W$ (see also~\eqref{canemb}).
Under this identification, we are looking for a homomorphism $\widetilde{\log}\colon
(K^\times)^g \to K^g$ that induces a  homomorphism 
$\widetilde{\log}\colon
(K^\times)^g/Q_W \to K^g$ and makes the following diagram commute:
\begin{equation}\label{D2}
\begin{tikzpicture}[baseline={(current bounding box.center)}, >=stealth]
  \matrix (m) [matrix of math nodes, row sep=3em, column sep=4em]{
    \Omega_W(K) &  & T(K) && (K^\times)^g && \\
    \Omega_W(K)/W & & T(K)/\Lambda && (K^\times)^g/Q_W && \\
    C(K) & & J(K) &&&&  K^g \\};
  \draw[->>] (m-1-1) edge (m-2-1);
   \draw[->]  (m-1-1) edge node[below] {$j_W$}(m-1-3);
  \draw[->>] (m-1-5) edge (m-2-5);
   \draw[->]  (m-1-3) edge node[below] {$\varphi$} node[above] {$\simeq$} (m-1-5);
   \draw[->, dashed]  (m-1-5) edge node[above] {\;\;$\widetilde{\log}$} (m-3-7);
   \draw[->, dashed]  (m-2-5) edge node[below] {$\widetilde{\log}$} (m-3-7);
   \draw[->]  (m-2-3) edge node[below] {$\varphi$} node[above] {$\simeq$} (m-2-5);
   \draw[->]  (m-3-3) edge node[below] {$\log_{J}$} (m-3-7);
  \draw[->>] (m-1-3) edge (m-2-3);
   \draw[right hook-latex] (m-2-1) edge node[below] {$j_W$} (m-2-3);
   \draw[->] (m-2-1) edge node[left] {} (m-3-1);
   \draw[->] (m-2-3) edge node[left] {$\simeq$}  (m-3-3);
   \draw[->] (m-2-5) edge node[below] {$\simeq$}  (m-3-3);
   \draw[right hook-latex] (m-3-1) edge node[below] {$j$} (m-3-3);
\end{tikzpicture}
\end{equation}

We first look at the formal group $J^1(K)\subset J(K)$.

\begin{lemma}\label{L:formal}\hfill
  \begin{enumerate}
    \item 
      There is an open neighborhood  $U\subset (1+\m)^g\subset
      (K^\times)^g$ of $1$ such that the
restriction of $\widetilde{\log}$ to $U$
  is given by 
  \begin{equation}\label{}
    \widetilde{\log}\,\vec{t} = \log \vec{t}\colonequals (\log t_1,\ldots,\log t_g)\,.
  \end{equation}
\item We have 
  \begin{equation}\label{}
    \Abint_P^Q\eta_i = \log u_i(z_P) - \log u_i(z_Q)
  \end{equation}
      for all $P,Q\in C(K)$ in the same residue disc with respective lifts
      $z_P,z_Q\in \Omega_W(K)$, and $i\in \{1,\ldots,g\}$.
  \end{enumerate}
\end{lemma}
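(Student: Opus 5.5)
The plan is to identify $\widetilde{\log}$ near the identity via the characterization of $\log_J$ in Proposition~\ref{P:Zarhin}, and then to read off part (2) by pulling back along the uniformization diagram~\eqref{D2}.

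\emph{Part (1).} The composite $\psi\colon (K^\times)^g\to (K^\times)^g/Q_W\simeq J(K)$ is a rigid analytic homomorphism, and it is a local isomorphism at $1$. Indeed, $Q_W$ is a discrete lattice: its elements have entries of positive valuation, so $Q_W\cap (1+\m)^g=\{1\}$. Hence there is an open neighborhood $U\subset(1+\m)^g$ of $1$ that maps isomorphically onto an open subgroup of $J(K)$. After shrinking, we may take this image inside $J^1(K)$ and take $U=(1+\pi^N\O)^g$, which is a subgroup.

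Next, $\log_J\circ\psi|_U$ is a locally analytic homomorphism $U\to \Lie(J)\simeq K^g$. Its differential at $1$ is $d\psi$, expressed in the coordinates given by the basis dual to $(\eta_i)$. To compute this differential, pull $\eta_i$ back to the torus. We use three facts: $j_W$ composed with $\varphi$ is $z\mapsto (u_1(z),\dots,u_g(z))$ (since $\infty$ lifts $P_0$); the pullback of $\eta_i$ to $\Omega_W$ is $\dlog u_i$; and invariant differentials on $J$ correspond to invariant differentials on the torus through $\psi$. Together these show that $\psi^*\eta_i = dt_i/t_i$. Here we use that the pullback $\Omega^1(J)\to\Omega^1(C)$ is an isomorphism, and that an invariant form on $\mathbb{G}_m^g$ is determined by its pullback along the curve through the Abel--Jacobi map. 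Consequently $\langle\log_J\psi(\vec t),\eta_i\rangle$ and $\log t_i$ are both homomorphisms $U\to K$ with the same differential $dt_i/t_i$ at $1$.

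On a small enough subgroup, both are given by convergent power series; for $\log_J$ this is the formal-group description in Proposition~\ref{P:Zarhin}. A homomorphism is determined by its derivative at the identity there, because both sides equal the formal logarithm of the corresponding formal group, and the formal group of $\G_m^g$ is transported by $\psi$. This gives $\widetilde{\log}=\log$ on $U$.

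\emph{Part (2).} Let $P,Q$ lie in the same residue disc. The integral equals $\langle \log_J(j(Q)-j(P)),\eta_i\rangle$. In the torus, $j(Q)-j(P)$ is represented by $\vec u(z_Q)/\vec u(z_P)$. The lifts may be chosen in a single disc of $\Omega_W$, mapping isomorphically to the residue disc. On such a disc each $u_i(z_Q)/u_i(z_P)$ lies in $1+\m$, since $u_i$ is an invertible analytic function on the disc. By part (1), after possibly using additivity along a chain of closer points (or the fact that $\log$ is a homomorphism defined on all of $1+\m$ and $\log_J$ is a homomorphism), we get $\pm(\log u_i(z_Q)-\log u_i(z_P))$. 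The sign must be matched against the stated convention, which I would double-check. The result is also independent of the choice of lifts, since changing a lift by $w\in W$ multiplies $\vec u$ by a period, and both sides are compatible with this.

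The main obstacle is extending from the small neighborhood $U$ to all of $(1+\m)^g$. This is needed because residue-disc ratios need not lie in $U$. I would handle it by noting that both $\langle\log_J\psi(\cdot),\eta_i\rangle$ and $\log t_i$ are homomorphisms on the group $(1+\m)^g$ that agree on the finite-index open subgroup $U$. Their difference therefore factors through a finite group into the torsion-free group $K$, so it vanishes.
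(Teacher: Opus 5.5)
Your argument is the paper's argument, written out in detail. The paper just takes $U$ to be the lift of $J^1(K)$ and appeals to Proposition~\ref{P:Zarhin}, leaving implicit the identification of $\eta_i$ with $dt_i/t_i$ on the torus, which you verify. Your finite-index argument is also sound: two homomorphisms into the torsion-free group $K$ that agree on $(1+\pi^N\O)^g$ agree on $(1+\m)^g$. It saves you from having to identify the lift of $J^1(K)$ exactly.

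Three side remarks need correcting.
\begin{enumerate}
\item The lattice meets $(1+\m)^g$ trivially, but not because ``its elements have entries of positive valuation'': inverses of periods do not, and off-diagonal entries of $Q_W$ need not. The correct reason is that $\ord(Q_W)$ is positive definite. Hence a nontrivial lattice vector has nonzero valuation vector $\ord(Q_W)\vec n$.
\item More importantly, your claim that the formula in (2) is independent of the lifts is false. Replacing $z_Q$ by $wz_Q$ multiplies $\vec u(z_Q)$ by a lattice vector. This changes $\log u_i(z_Q)$ by a $\Z$-linear combination of the $\log Q_{ij}$, which is nonzero in general for a fixed branch of $\log$ (compare the values of $\log(Q_W)$ in Example~\ref{E:AbEx}). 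This is exactly what the correction term $\mathcal{L}\cdot\ord$ in Lemma~\ref{L:extend} repairs. So (2) holds only for lifts $z_P,z_Q$ chosen in a common disc of $\Omega_W$ over the residue disc, which is what your main argument actually uses.
\item You need not hedge on the sign. We have $\Abint_P^Q\eta_i=\langle\log_J j(Q)-\log_J j(P),\eta_i\rangle$, and $j(Q)$ corresponds to $\vec u(z_Q)$. So your argument yields $\log u_i(z_Q)-\log u_i(z_P)$, which agrees with Theorem~\ref{T:AbIntMum} applied to $D=Q-P$. The order of the terms displayed in the lemma appears to be reversed.
\end{enumerate}
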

\begin{proof}
  We take $U$ to be the lift of $J^1(K)$. Since $P-Q$ belongs to $J^1(K)$ for $P,Q$ in
  the same residue disc, the result follows from
  Proposition~\ref{P:Zarhin}.
\end{proof}
In order to extend to all of $J(K)$, we fix any branch $\log\colon K^\times
\to K$ of the logarithm. However, the coordinates of $\widetilde{\log}$
might not be exactly $\log$
on all of $(K^\times)^g$ since the resulting map might not be well-defined on
$(K^\times)^g/Q_W$. 
This is similar to the situation of a Tate curve $E/K$ with $E(K)\simeq
K^*/q_E^{\Z}$, where one chooses a branch of the logarithm that vanishes on
the Tate parameter $q_E\in \m$ by correcting the chosen branch $\log$ of the logarithm 
by $\mathcal{L}\cdot \ord$, where $\mathcal{L}=\log(q_E)/\ord(q_E)$ is the
$\mathcal{L}$-invariant of $E$.

We extend this by defining a naive extension of the
$\mathcal{L}$-invariant. Namely, since $C=C_W$ is a Mumford curve, the matrix
$\ord(Q_W)\in \Q^{g\times g}$ given by taking the orders of all entries is
positive definite by~\cite[VI (2.4)]{GP80} and we may define a matrix
\begin{equation}\label{Linvmat}
  \mathcal{L} \colonequals \log(Q_W) \cdot \ord(Q_W)^{-1}\in K^{g\times
  g}\,,
\end{equation}
where $\log(Q_W)\in K^{g\times g}$ is defined by taking the logarithms of all
entries.
\begin{lemma}\label{L:extend}
  There is a unique continuous homomorphism $\widetilde{\log}\colon
  (K^\times)^g \to K^g$  that
  restricts to $\log$ on $U$ and vanishes in the multiplicative
  lattice generated by the columns of $Q_W$; it is given by
  \begin{equation}\label{logformula}
    \widetilde\log\,\vec{t} = \log \vec{t} - \mathcal{L}\cdot \ord \vec{t}\,,
  \end{equation}
  where $\log \vec{t} = (\log t_1,\ldots,\log t_g)$ and
$\ord \vec{t} = (\ord t_1,\ldots,\ord t_g)$.
\end{lemma}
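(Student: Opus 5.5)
We prove existence by checking that the formula~\eqref{logformula} has the required properties, and uniqueness by a structural argument about continuous homomorphisms $(K^\times)^g\to K^g$.

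\emph{Existence.} Define $\widetilde\log$ by~\eqref{logformula}. It is a homomorphism, since $\log\colon K^\times\to K$ and $\ord\colon K^\times\to\Z$ are homomorphisms and $\mathcal{L}$ acts linearly. It is continuous, since $\ord$ is locally constant and $\log$ is continuous. On $U\subset(1+\m)^g$ we have $\ord\vec t=0$, so $\widetilde\log$ restricts to $\log$ there. For the lattice, let $q_j$ be the $j$-th column of $Q_W$. Then $\log q_j$ and $\ord q_j$ are the $j$-th columns of $\log(Q_W)$ and $\ord(Q_W)$. By~\eqref{Linvmat},
\[
\mathcal{L}\cdot\ord(Q_W)=\log(Q_W),
\]
and this gives $\widetilde\log\, q_j=0$ for each $j$. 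Since $\widetilde\log$ is a homomorphism, it vanishes on the whole multiplicative lattice generated by the columns. The only input here is that $\ord(Q_W)$ is invertible, which holds because it is positive definite by~\cite[VI (2.4)]{GP80}.

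\emph{Uniqueness.} Let $\psi$ be the difference of two such homomorphisms. Then $\psi$ is a continuous homomorphism $(K^\times)^g\to K^g$ that vanishes on $U$ and on the lattice $\Lambda'$ generated by the columns of $Q_W$.

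First, $\psi$ vanishes on $\O^{\times g}$. The subgroup $U$ is open, so it contains $(1+\m^N)^g$ for some $N$. The quotient $\O^\times/(1+\m^N)$ is finite, so for $x\in\O^{\times g}$ some power $x^m$ lies in $U$. Then $m\psi(x)=\psi(x^m)=0$, and since $K^g$ is torsion-free, $\psi(x)=0$.

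Second, since $(K^\times)^g=\pi^{\Z^g}\times\O^{\times g}$, $\psi$ factors through $\ord\colon(K^\times)^g\to\Z^g$. So there is a matrix $M$ with $\psi(\vec t)=M\cdot\ord\vec t$.

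Finally, vanishing on the columns of $Q_W$ gives $M\cdot\ord(Q_W)=0$. Since $\ord(Q_W)$ is invertible, $M=0$, hence $\psi=0$.

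The only mildly delicate step is the passage from vanishing on the open subgroup $U$ to vanishing on all of $\O^{\times g}$; it relies on $U$ having finite index in $\O^{\times g}$ and on $K^g$ being torsion-free. Everything else is formal.
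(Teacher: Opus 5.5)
Your proof is correct and follows the same route as the paper. Both show that any such homomorphism is $\log\vec t$ plus a linear function of $\ord\vec t$, use invertibility of $\ord(Q_W)$ to force that linear term to be $-\mathcal{L}\cdot\ord\vec t$, and check the formula directly. Your version is more careful than the paper's in two places. The paper writes the correction coordinatewise as $c_i\ord t_i$ with scalar constants $c_i$, which is too restrictive for a non-diagonal $\mathcal{L}$; your full matrix $M$ is the right form. The paper also tacitly replaces $U$ by $(1+\m)^g$, whereas you justify the passage from $U$ to $\O^{\times g}$ using finite index and torsion-freeness of $K^g$.
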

\begin{proof}
  The coordinates of a continuous homomorphism $(K^\times)^g \to K^g$ that
  restricts to $\log$ on $(1+\m)^g$ must be given by branches of the
  logarithm; in other words there are constants $c_1,\ldots, c_g\in K$ such
  that
  \begin{equation*}
    \widetilde{\log}_i(\vec{t}) = \log t_i + c_i\ord t_i\,. 
  \end{equation*}
  Hence, if there is a homomorphism $\widetilde{\log}$ with the desired
  properties, then it must be given by~\eqref{logformula}.
  Since $\log$ and $\ord$ are both homomorphisms,~\eqref{logformula} indeed
  satisfies these properties.
\end{proof}
Hence we have shown
\begin{thm}\label{T:AbIntMum}
  Let $D\in \Div^0(C)$ pull back to $\tilde D\in \Div^0\Omega_W(K)$. Then we have 
  \begin{equation}\label{}
    \left(\Abint_D\eta_1,\ldots,\Abint_D\eta_g\right)  = \left(\log
     u_1(\tilde D), \ldots, \log u_g(\tilde D)\right)
    -\mathcal{L}\cdot \left( \ord
    u_1(\tilde D),\ldots,\ord u_g(\tilde D)\right)\,.
  \end{equation}
\end{thm}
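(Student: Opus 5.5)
The plan is to combine Lemma~\ref{L:extend} with the commutative diagram~\eqref{D2}. Everything is linear in $D$, and the abelian integral extends linearly to $\Div^0(C)$, so it suffices to treat $D=Q-P$ with $P,Q\in C(K)$ and lifts $z_P,z_Q\in\Omega_W(K)$. Writing $D=(Q-P_0)-(P-P_0)$, we reduce further to $D=Q-P_0$, where $P_0$ is lifted by $\infty$. Then $\varphi\circ j_W(z_Q)=(u_1(z_Q),\ldots,u_g(z_Q))$, and $u_i(\infty)=1$ by the cross-ratio conventions. For a general lift $\tilde D=\sum n_k z_k$ we read $u_i(\tilde D)$ as $\prod u_i(z_k)^{n_k}$, which equals $(\tilde D,\iota(w_i))_W$ by bilinearity.

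First, I would show that $\Lambda_\log\colonequals\log_J\circ(\text{quotient map})\circ\varphi^{-1}\colon (K^\times)^g\to K^g$ is a continuous homomorphism. Here $K^g$ is identified with $\Lie(J)$ via the dual basis to $(\eta_1,\ldots,\eta_g)$. The map $\log_J$ is a homomorphism by Proposition~\ref{P:Zarhin}, and the uniformization $(K^\times)^g\to J(K)$ is a continuous (rigid analytic) homomorphism, so the composite is too. By construction it kills the lattice spanned by the columns of $Q_W$. Next, I would check that on a neighbourhood $U$ of $1$ it agrees with the coordinatewise $\log$. This is Lemma~\ref{L:formal}(1). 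The content is that the pullback of $\eta_i$ to the torus is $dt_i/t_i$: on $\Omega_W$ it is $\dlog u_i=\dlog(t_i\circ\varphi\circ j_W)$, and translation-invariant differentials on $J$ correspond to invariant differentials $dt_i/t_i$ on the torus. Hence the linearization is the identity in these coordinates, and both maps agree near $1$ since $\log$ is the unique local homomorphism with that linearization.

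The uniqueness part of Lemma~\ref{L:extend} then forces $\Lambda_\log=\widetilde{\log}$, given by~\eqref{logformula}. I would use that $\ord(Q_W)$ is invertible, so that the constants $c_i$ are uniquely pinned down by vanishing on the lattice. Applying this to $\vec t=(u_1(\tilde D),\ldots,u_g(\tilde D))$ and using the commutativity of~\eqref{D2} gives $\left(\Abint_D\eta_i\right)_i=\log_J(j(D))$, which yields the displayed formula.

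The step I expect to be the main obstacle is the identification of the linearization, namely verifying precisely that the dual-basis identification $\Lie(J)\simeq K^g$ matches the torus coordinates. There are two points to check: that $\varphi^*(dt_i/t_i)$ corresponds to $\eta_i$ under $j^*$ (using the base point $\infty\mapsto P_0$), and that the sign conventions match those of Lemma~\ref{L:formal}(2). I would also confirm well-definedness, namely independence of the chosen lift $\tilde D$. Changing a lift by $w\in W$ multiplies $\vec t$ by a column of $Q_W$ (up to the pairing's symmetry), and that column is killed by $\widetilde{\log}$.
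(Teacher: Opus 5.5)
Your proposal is correct and follows the paper's route: the theorem is obtained by combining the commutative diagram~\eqref{D2}, Lemma~\ref{L:formal}(1) (agreement with $\log$ near $1$) and the uniqueness statement of Lemma~\ref{L:extend}. Your additional checks (identifying the linearization via $\dlog u_i$, invertibility of $\ord(Q_W)$, independence of the lift) make explicit what the paper leaves implicit. One small correction: the correction term must be a full matrix, $\log\vec t + C\,\ord\vec t$ with $C\in K^{g\times g}$, rather than diagonal constants $c_i\ord t_i$. This is why vanishing on the columns of $Q_W$ gives the solvable system $C\,\ord(Q_W)=-\log(Q_W)$, and hence $C=-\mathcal{L}$.
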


\subsection{Our algorithm}\label{}
Our algorithms allow us to compute abelian integrals on hyperelliptic Mumford curves using Theorem~\ref{T:AbIntMum}:

\begin{algorithm}[H] \label{A:AbelianIntegrals}
	\caption{\bf Computing Abelian integrals  on hyperelliptic Mumford curves}
	\KwIn{
		\begin{itemize}
			\item A hyperelliptic Mumford $C/K$ 	uniformized by a Whittaker group $W$ with generators $w_1,\dots,w_g$.
			\item A degree~$0$ divisor $D$ on $C$.
		\end{itemize}
		}
	\KwOut{The Abelian integrals $\lAbint_D\eta_1,\ldots,\lAbint_D\eta_g$ where $\eta_i = \dfrac{u'_{i}(z)}{u_{i}(z)}dz$ and $u_{i} = u_{w_i}$.}
	\begin{enumerate}
		\item Compute a lift $\tilde D$ of $D$ to $\Omega_W(K)$.
		\item Compute the period matrix $Q_W$ and the $\mathcal{L}$-invariant $\mathcal{L} = \log(Q_W) \cdot \ord(Q_W)^{-1}$.
		\item Compute the tuple $\big(u_1(\tilde D), \ldots, u_g(\tilde D)\big)$, and $\log$ and $\ord$ of its components.
		\item Return
		\[\left(\Abint_D\eta_1,\ldots,\Abint_D\eta_g\right)  = \left(\log
		u_1(\tilde D), \ldots, \log u_g(\tilde D)\right)
		-\mathcal{L}\cdot \left( \ord
		u_1(\tilde D),\ldots,\ord u_g(\tilde D)\right).\] 
	\end{enumerate}
\end{algorithm}

\section{Schneider $p$-adic heights}\label{S:heights}

In analogy with the real-valued N\'eron--Tate height pairing on an abelian
variety $A$ over a number field $F$, various authors have defined symmetric
bilinear $p$-adic height pairings $A(F)\times A(F) \to \Q_p$. Explicit
methods for computing $p$-adic height pairings have been instrumental in
two versions of 
the quadratic Chabauty method for computing rational points on curves,
see~\cite{BD18} and~\cite{BMS25}. They are also important in order to
numerically check a $p$-adic version of the conjecture of Birch and
Swinnerton--Dyer in examples; see~\cite{mtt, BMS16}.

All $p$-adic height pairings in the literature can be decomposed into sums
of local height pairings, one for each non-archimedean place of $F$. 
They also require the choice 
of a nontrivial continuous $\Q_p$-valued id\`ele class character $\chi$ on $F$. 
In the present article, we focus on the $p$-adic height pairing
\begin{equation}\label{}
  \langle\cdot, \cdot\rangle_{\chi}\colon J(F)\times J(F)\to \Q_p
\end{equation}
constructed by Schneider
\cite{Sch82}, where  the abelian variety in question is the Jacobian
of a curve $C/F$. 
\begin{rk}\label{R:}
  Schneider's construction relies on an assumption which, in the setting we
  are concerned with, translates into Assumption~\ref{ass2} below.
\end{rk}

We fix a finite place $v$ of $F$ and we denote $K\colonequals F_v$.
The component $\chi_v$ is then a non-trivial continuous homomorphism
\begin{equation}\label{}
\chi_v=\rho \colon K^{\times}\to \Q_p\,.
\end{equation}
In this case, as shown by Werner~\cite{Wer96},
the local $p$-adic Schneider height pairing $\langle D,E\rangle_\rho$ on $C/K$ 
with respect to $\rho$ can be written as a symmetric
biadditive pairing on divisors of degree~0 on $C/K$ with disjoint support. If
$D=\div(f)$ is principal, then
\begin{equation}\label{princ}
  \langle D,E\rangle_\rho = \rho(f(E))\,.
\end{equation}
If $L/K$ is a finite extension
and $D,E\in \Div^0(C)$ have disjoint support,
then we have
\begin{equation}\label{htext}
\langle D\otimes L,E\otimes L\rangle_{\rho_L} = \langle
D,E\rangle_{\rho}\cdot [L:K]\,,
\end{equation}
where $\rho_L = \rho\circ N_{L/K} \colon
  L^\times\to \Q_p$. The global Schneider height pairing then satisfies
\begin{equation}\label{}
  \langle P,Q\rangle_{\chi} = \sum_v\langle D_v,E_v\rangle_{\chi_v}
\end{equation}
for any $P,Q\in J(F)$ and $D,E\in \Div^0(C)$ with disjoint support representing $P,Q$,
respectively, where we write 
$D_v = D\otimes F_v$ and $E_v = E\otimes F_v$. The \textit{Schneider regulator (with respect to $\chi$)} is defined by
\begin{equation}\label{reg}
	\reg_{\chi}(J/F) \colonequals \reg_{\chi}(P_1,\ldots,
	P_r)\colonequals (\langle P_i,P_j\rangle_{\chi})_{1\le i,j\le
		r}\,,
\end{equation}
where the classes of $P_1,\ldots,P_r$ form a basis of $J(F)$ modulo torsion.
Schneider has conjectured in~\cite{Sch82} that the Schneider regulator with respect to the
cyclotomic character is nonzero.

\begin{rk}\label{R:}
  Mazur and Tate construct height pairings in great generality
  in~\cite{MT83}; in particular, they show that Schneider's height pairing
  is a special case of their construction. For abelian varieties with
  ordinary reduction at all places above $p$, they also define a canonical
  $p$-adic height pairing. 
By \cite[\S4.4]{MT83}, the canonical Mazur--Tate height pairing is
  equal to the Schneider height pairing for abelian varieties with good
  ordinary reduction. These two pairings, however, differ in general, as
  noted by Mazur--Tate--Teitelbaum (see~\cite[\S I.6]{mtt}): a formula for
  the difference in the case of semistable ordinary reduction was given by
  Werner \cite[Theorem~7.2]{Wer98}. 
\end{rk}

We now define the local height pairing $\langle \cdot,\cdot\rangle_{\rho}$ by distinguishing cases when the character $\rho$ is unramified or ramified. Recall that $\rho$ is called unramified if $\rho(\O_v^\times)=0$.

\subsection{The unramified case}\label{subsec:unram}

 In this case, the local height pairing can be computed in terms of
arithmetic intersection theory; up to a constant multiplicative factor, it
is the same for all definitions of $p$-adic height
pairings between divisors of degree~0 with disjoint support, and in fact
coincides with the real-valued local N\'eron height pairing at $v$ defined
in~\cite[\S III.5]{Lan88}.
It is given in terms of intersection theory on a proper regular model
$\calC/\O_K$ of $C/K$ by
\begin{equation}\label{IntersThryForm}
(D,E)_\rho = \rho(\pi_K)\cdot(\calD\cdot\calE)\in \Q_p\,,
\end{equation}
where $\pi_K$ is a uniformizer of $\O_K$, and $\calD$ and $\calE$ are extensions of
$D$ and $E$, respectively, to $\calC$ that have
trivial intersection multiplicity with all vertical divisors.
See~\cite{Hol12, Mul14} for algorithms to compute the local height pairing
for hyperelliptic curves, and~\cite{vBHM20} for general curves.

\subsection{The ramified case}\label{subsec:ram}
Now suppose that the continuous homorphimsm $\rho$ is ramified; in
particular, we have $v\mid p$.
Let $C/K$ be a Mumford curve of genus $g$. Then $C = \Omega_W/W$ for a
Schottky group $W$ of rank $g\ge 2$ (which
is only Whittaker when $C$ is hyperelliptic). We write $\Omega=\Omega_W$
and fix good generators $w_1,\ldots,w_g$ of $W$.
\begin{ass}\label{ass1}
We assume that the field $K$ is large enough so that
\begin{enumerate}
  \item $\Omega(K)\ne \varnothing$, and
  \item the entries of the period matrix $Q\colonequals Q_W\in K^{g\times g}$
    from~\S\ref{subsec:periods} are squares in $K^\times$.
\end{enumerate}
\end{ass}
Assumption~\ref{ass1} can be achieved, if necessary, by passing to a finite
extension.
We will also suppose that the following holds:
\begin{ass}\label{ass2}
  The matrix $\rho(Q) = \big(\rho(q_{ij})\big)_{i,j}$ is invertible, where
  $Q=(q_{ij})_{i,j}$.
\end{ass}
Assumption~\ref{ass2} can be checked easily in practice, once we have
computed the period matrix.
Loosely speaking, this condition says that the image of the lattice $\Lambda$ under the map $\rho$ is a lattice of full rank in $\Q_p^g$.
We have not found an example where $\rho(Q)$ is not invertible.
Thanks to \cite[Proposition~4.12]{Wer97}, this assumption implies that the condition for the existence of Schneider’s local $p$-adic
height pairing on $J$ is fulfilled. 

By Assumption~\ref{ass1} we may fix, for all $i,j\in \{1,\ldots,g\}$,
$p_{ij}\in K^\times$
        such that 
        $$p_{ij}^2 = Q_{ij} =\langle [w_i], [w_j]\rangle_{W^{\ab}} =( \iota(w_i),\iota(w_j))_W\quad\text{ and}\quad p_{ij} =
        p_{ji}\,.$$ 
        We call the $p_{ij}$~\textit{half-periods} and we call the matrix
        \begin{equation}\label{}
          P \colonequals (\rho(p_{ij}))_{ij} \in K^{g\times g}
        \end{equation}
        an~\textit{additive half-period matrix}. 

For $D'\in \Div^0(\Omega)_K$, the group of divisors of degree~0 on $\Omega$
that are rational over $K$, we define
\begin{equation}\label{xi}
  \xi(D') \colonequals \bigg(\rho\Big(\big(\iota(w_i),D'\big)_W\Big)\bigg)_{i}\in K^g\,,
\end{equation}
where we use Remark~\ref{R:WeilRec}.
Let $[\cdot,\cdot]_P$ be the symmetric bilinear form attached to $P$.

\begin{thm}\label{T:Werner}\emph{(Werner~\cite[Theorem~3.2]{Wer96})}
  Let $D,E\in \Div^0(C_K)$ have disjoint support and choose lifts $D',E'\in
  \Div^0(\Omega)_K$ of $D,E$, respectively.
  Then the local Schneider height pairing satisfies
  $$\langle D,E\rangle _\rho =  \rho\big((D',E')_W\big)-
  [\xi(E'),\xi(D')]_P\,.$$
\end{thm}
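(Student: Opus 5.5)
The plan is to show that the right-hand side
\[
\Phi(D',E') \colonequals \rho\big((D',E')_W\big) - [\xi(E'),\xi(D')]_P
\]
has all the properties that characterize Schneider's local pairing. Concretely: it should descend to divisors on $C$, be symmetric and biadditive, and satisfy~\eqref{princ} on principal divisors. Since the local pairing is determined up to a bilinear form on $J(K)$ by these properties, the statement then reduces to matching a normalization.

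First, $\Phi$ is symmetric and biadditive. This follows from Lemma~\ref{L:thetaprops} (extended to $(\cdot,\cdot)_W$), the symmetry of $P$, and the linearity of $\xi$.

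Second, $\Phi$ should not depend on the choice of lifts. Replacing $E'$ by $E' + (wz - z)$ for some $w\in W$ changes $(D',E')_W$ by the factor $(D',\iota(w))_W$. Writing $\iota(w)=\sum n_i\iota(w_i)$ in $W^{\ab}$, this factor is $\prod_i (\iota(w_i),D')_W^{n_i}$, so $\rho$ of it equals $\sum_i n_i\,\xi(D')_i$. The same substitution changes $\xi(E')$ by $\big(\sum_i n_i\rho(Q_{ij})\big)_j = \rho(Q)\,n$. The correction term therefore changes by $[\rho(Q)n,\xi(D')]_P$. For this to cancel $\sum_i n_i\xi(D')_i$, we need the compatibility $[\rho(Q)n, x]_P = n\cdot x$. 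Here I have to pin down what $[\cdot,\cdot]_P$ means: it should be the bilinear form $(x,y)\mapsto x^{T}(2P)^{-1}y$, using $\rho(Q) = 2P$ because $p_{ij}^2=Q_{ij}$. Assumption~\ref{ass2} makes this inverse exist. Getting this normalization right, including the factor $2$ and which argument gets the transpose, is the first delicate point.

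Third, $\Phi$ must agree with~\eqref{princ} on principal divisors. If $D=\div(f)$ with $f\in K(C)$, lift $f$ to a $W$-invariant meromorphic function $\tilde f$ on $\Omega$. The theta pairing $(D',E')_W$ then equals $\tilde f(E')$ up to the factor coming from the multiplicative lattice; this is the theta-function description of principal divisors in~\cite[VI]{GP80}, and it uses Remark~\ref{R:WeilRec}. Moreover, $\xi(D')$ equals $\rho$ of the Abel--Jacobi image of $D'$, which lies in $\rho(\Lambda)$, so $\xi(D') = \rho(Q)m$ for some $m\in\Z^g$. The bilinear correction should then cancel exactly the lattice factor, leaving $\rho(f(E))$. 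I expect this step to be the main obstacle: it needs the precise relation $(D',E')_W = \tilde f(E')\cdot(\text{period product})$ for a degree-zero divisor $D'$ whose image lies in $\Lambda$.

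Finally, two pairings with these properties differ by a continuous symmetric bilinear form on $J(K)$. To identify $\Phi$ with Schneider's pairing rather than with some other pairing of this kind, I would use Schneider's characterization through the splitting of the Hodge filtration. Werner does this by identifying the canonical splitting for the torus uniformization: the quadratic correction comes from the $\rho$-splitting of the extension $0\to T\to \cdot\to \Lambda$. In this plan I would cite~\cite[\S3]{Wer96} for that identification rather than rederive it.
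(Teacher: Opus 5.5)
Your route differs from the paper's, though in the end both rest on the same citation. The paper gives no independent argument. It quotes Werner's Theorem~3.2, which covers only $D=a-b$, $E=c-d$ with $a,b,c,d\in C(K)$. It extends this to pointwise $K$-rational divisors by biadditivity, and to divisors that are only $K$-rational via Remark~\ref{R:WeilRec} and the base-change rule~\eqref{htext}.

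You instead check the formal properties of the right-hand side and reduce the theorem to the vanishing of a residual symmetric bilinear form on $J(K)$, which you then take from Werner. Your steps 1--3 are sound, apart from a harmless sign slip: $wz-z$ has class $-\iota(w)$, so the factor is $(D',\iota(w))_W^{-1}$. The same sign enters the change of $\xi(E')$, so your cancellation condition is unchanged.

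What your route adds is step 2. Since $\rho(p_{ij})=\rho(Q_{ij})/2$ for any choice of half-periods, lift-independence forces $[x,y]_P=x^{T}\rho(Q)^{-1}y=x^{T}(2P)^{-1}y$. The paper leaves this normalization implicit in ``the symmetric bilinear form attached to $P$''.

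Step 3 is easier than you expect. Replace the lift $D'$ of $D=\div(f)$ by $D''=D'-(z_0-\beta z_0)$, where $\beta\in W$ is chosen (by Abel's theorem) so that the Abel--Jacobi image of $D''$ is trivial. Then $\xi(D'')=0$. The function $z\mapsto (D'',z-\infty)_W$ is $W$-invariant with divisor the pullback of $D$, so it is a constant times $\tilde f$. Step 2 then gives the value $\rho(f(E))$ at once.

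Two things in step 4 need fixing.

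First, Schneider's pairing is not characterized by a splitting of the Hodge filtration. For Mumford curves, heights attached to such splittings differ from Schneider's (see the footnote in the introduction), so that characterization would pin down the wrong form. The residual form is fixed by Schneider's own construction, which is what Werner makes explicit on the torus.

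Second, Werner's statement covers only differences of $K$-points, so you must explain how the vanishing extends to all of $J(K)$. You can argue as the paper does via~\eqref{htext}. Alternatively, within your framework: the classes $[a-b]$ with $a,b\in C(K)$ generate an open, hence finite-index, subgroup of the compact group $J(K)$. This holds because $C(K)\neq\varnothing$ by Assumption~\ref{ass1} and the map $C(K)^g\to J(K)$ is a local diffeomorphism at generic points. A $\Q_p$-valued bilinear form vanishing on a finite-index subgroup vanishes identically.
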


\begin{rk}\label{R:}
  Werner only states and proves Theorem~\ref{T:Werner} for differences
  $D,E$ of pairs of points in $C(K)$. The extension to arbitrary $D,E\in Z^0(C_K)$ is
  immediate, and the full statement follows from Remark~\ref{R:WeilRec},
  using that the local Schneider height behaves well with respect to field
  extensions by~\eqref{htext}. 
\end{rk}

\subsection{Computing Schneider heights}\label{}
Suppose that $C/K$ is a hyperelliptic Mumford curve uniformized by a
Whittaker group $W$, and that $\rho\colon
K^\times \to \Q_p$ is ramified.
In our examples, $\rho$ will typically be a local component of the cyclotomic
character, so on $K=\Q_p$
it will be the Iwasawa branch $\log$ of the $p$-adic log, determined by $\log(p)=0$. We will
usually have to work over a proper extension $K/\Q_p$, in which case we use 
$$\rho(x)
= \log N_{K/\Q_p}(x)\,.$$
However, our implementation allows more general homomorphisms.

\begin{algorithm}[H] \label{A:SchneiderHeights}
	\caption{\bf Computing local Schneider heights on hyperelliptic Mumford curves}
	\KwIn{
		\begin{itemize}
			\item A hyperelliptic Mumford $C$ over $K$.
			\item A continuous homomorphism $\rho \colon K^{\times}\to \Q_p$ that is ramified.
			\item $D,E\in \Div^0(C)$ with disjoint support.
		\end{itemize}
	}
	\KwOut{The local Schneider height pairing $\langle D,E\rangle_\rho$.}
	\begin{enumerate}
		\item Compute a Whittaker group $W$ uniformizing $C$ using the
      algorithms in Section~\ref{S:FixedFromRam}.
		\item Compute the period matrix $Q=Q_W$. If $\rho(Q)$ is not invertible, then 
      throw an error.
		\item Compute an additive half-period matrix $P$ by extending the base field if necessary.
		\item Compute lifts $D',E'\in
		\Div^0(\Omega)_K$ of $D,E$ using Algorithm~\ref{A:LiftPoints}.
		\item Compute $(D',E')_W,\, \xi(D')$ and $\xi(E')$.
		\item Return $\langle D,E\rangle _\rho =  \rho\big((D',E')_W\big)-
		[\xi(E'),\xi(D')]_P$.
	\end{enumerate}
\end{algorithm}

This allows us to compute $\langle D,E\rangle_{\rho}$ to any desired
$p$-adic precision. We use the naive algorithm to compute
$\theta_W$ from~\S\ref{subsec:naive} for the first few digits, and then we switch to the
iterative algorithm from~\cite{MX25} outlined
in~\S\ref{subsec:iterative}.
\subsubsection{Computing global Schneider heights}\label{AlgoGlobal}
Suppose that $F$ is a number field and that $C/F_v$ is a hyperelliptic
Mumford curve for all $v\mid p$ such that $\chi_v$ is ramified. 
In order to compute global Schneider heights between two points $P,Q\in
J(F)$, we first need to find nonzero integers $n,m$ and divisors $D,E\in \Div^0(C)$ with disjoint
support such that $D$ represents $nP$ and $E$ represents $mQ$; 
the bilinearity of the Schneider height implies
\begin{equation}\label{}
  \langle P,Q\rangle = \frac{1}{nm}\sum_v \langle D_v,
  E_v\rangle_{\chi_v}\,,
\end{equation}

For our algorithms, we try to find $D,E,n,m$ so that $D_v, E_v \in
Z^0(\Omega_{W_v}(F_v))$ for all $v$ such that $\chi_v$ is ramified, where
$W_v$ is a Whittaker group uniformizing $C_v$.

We either use the built-in {\tt Magma}-command {\tt LocalIntersectionData}, which
implements the algorithm from~\cite{Mul14} or the {\tt Magma}-implementation of the
more flexible algorithm
from~\cite{vBHM20}  available from
{\url{https://github.com/emresertoz/neron-tate}} to compute $\langle D_v,
  E_v\rangle_{\chi_v}$ when $\chi_v$ is unramified. The
  ramified pairings are computed via Algorithm~\ref{A:SchneiderHeights}.

 \subsection{$p$-adic BSD}\label{S:BSD}

A $p$-adic analogue of the Birch and Swinnerton--Dyer conjecture for
an elliptic curve over $\Q$ was given by Mazur--Tate--Teitelbaum 
\cite{mtt} when $p$ is a prime of good ordinary or multiplicative
reduction, with the canonical regulator defined in terms of the
N\'eron--Tate height replaced by the Schneider regulator (with respect to
the cyclotomic character). 
Balakrishnan--M{\"u}ller--Stein \cite{BMS16} formulated a
generalization of the Mazur--Tate--Teitelbaum conjecture in the good ordinary case to
higher-dimensional modular abelian varieties of $\GL_2$-type over $\Q$.
They also provided numerical evidence supporting their conjecture for
Jacobians of genus 2 curves. To compute the regulator, they used that
Schneider's height is equivalent to the Coleman--Gross height with respect
to the unit root subspace and computed the latter using algorithms for
Coleman integration (see~\cite{BB12}). This height is expected to be equivalent to the
canonical Mazur--Tate height (see~\cite{BB15, BKM} for proofs for
dimension~1 and~2, respectively), and hence different from the Schneider height
in bad reduction.

On the other hand, the Mazur--Tate--Teitelbaum conjecture in the case of split multiplicative
reduction, the \textit{exceptional} case, is of special interest. One might
expect that a generalization of this conjecture to higher-dimensional
modular abelian varieties of $\GL_2$-type over $\Q$ in the case of split
purely toric reduction can be formulated. Formulating such a conjecture, as
well as gathering numerical evidence for it, requires the computation of Schneider heights. To that end, the findings of the current project can be used.

\section{Numerical examples}\label{sec:examples}
We applied the algorithms described in this paper to various examples of
low genus hyperelliptic Mumford curves. The files listed in this section
can all be found in our repository
{\url{https://github.com/mmasdeu/hyperellipticmumford}}.

\begin{ex}\label{Ex:KadExamples}
	
	In the final part of his thesis, Kadziela computed
	for $6$ different hyperelliptic Mumford curves of genus $2$ over $\Q_5$ 
	the corresponding Whittaker groups and period matrices using {\tt Magma}; see
	\cite[\S7.4]{Kad07}. Unfortunately, we were not able to find his code.
	More importantly, the fixed points in his examples are not in strong
	Kadziela
	position, even though this is necessary for the correctness of his
	algorithm. However, the fixed points turn out to be in weak Kadziela
	position. Hence, using our extension of Kadziela's algorithm
	in \S\ref{S:Kadziela} based on the approximation result
	Proposition~\ref{P:approxg2}, we managed to verify his claims. The
	complete computation can be found in the file
	\texttt{KadzielaExamples.sage}. In particular, for the curve $X_1$ in \cite[\S7.4]{Kad07} which is given by the equation  
	\[y^2 = x^5 - 326x^4 + 1052\cdot5^2x^3 - 5914\cdot5^2x^2 + 39\cdot5^5x = x(x-5^3)(x-5)(x-195)(x-1),\]
	our computations show that the nontrivial fixed points are
	\[b_0 = 5^{3} \cdot 87495069218 + O(5^{20}),\ \ \ a_1 = {5 \cdot 7806971503561 + O(5^{20})},\ \ \ b_1 = {5 \cdot 12203741012063 + O(5^{20})},\]
	and that the matrices 
	\begin{small}
		\[w_1 = \left(\begin{array}{rr}
			5 \cdot 937226187499 & 5 \cdot 1866057579057  \\
			2 & 4686130937491
		\end{array}\right) + O(5^{20}), \ \ \ w_2 = \left(\begin{array}{rr}
			5^{3} \cdot 87495069218 & 5^{3} \cdot 587949314689 \\
			2 & 10936883652246
		\end{array}\right) + O(5^{20})\]
	\end{small}
	generate a Whittaker group $W$ that uniformizes $X_1$. Finally, the corresponding period matrix is
	\[Q_W = 
	\left(\begin{array}{rr}
		5^{2} \cdot 825167307749 & 5^{2} \cdot 1908298341511 \\
		5^{2} \cdot 1908298341511 & 5^{6} \cdot 787156051
	\end{array}\right) + O(5^{20}).\]
	
\end{ex}

\begin{ex}\label{E:AbEx} Consider the hyperelliptic curve $C/\Q$ \cite[\href{https://www.lmfdb.org/Genus2Curve/Q/3950/b/39500/1}{3950.b.39500.1}]{LMFDB} given by
\[y^2 = (x^2-x-1)(x^4+x^3-6x^2+5x-5).\]
  It is a Mumford curve over $\Q_5$ 
  and the stable reduction is the union of two projective lines meeting transversally at three points:
  \[\scalebox{0.7}{\begin{tikzpicture}
	\draw[thick] (0,-0.4) to[bend left] (4,0);
	\draw[thick] (4,0) to[bend right] (8,0.4);
	\draw[thick] (0,0.4) to[bend right] (4,0);
	\draw[thick] (4,0) to[bend left] (8,-0.4);
  \end{tikzpicture}}\]
In this example, in order to test the correctness of our algorithms, we do two experiments:

\begin{itemize}
	\item \textbf{Test I:} According to the database, the Mordell-Weil group
    of the Jacobian of $C$ over $\Q$ is isomorphic to $\Z/12\Z$. Therefore,
    for any two points $P,Q\in C(\Q)$, the Abelian integrals against the
    divisor $(Q)-(P)$ must vanish; see Remark~\ref{rem:TorsionVanishing}. We will confirm this for points $P=(1,2),\ Q=(1,-2)\in C(\Q)$.
	\item \textbf{Test II:} Certain Abelian integrals on this curve are already computed in \cite[Example~7.2]{Kay22}, and the code used in the computations allows us to deal with all Abelian integrals\footnote{In \emph{loc. cit.}, the notion of ``Vologodsky" integration is discussed, of which abelian integration is a special case. More precisely, Vologodsky integrals of holomorphic forms are precisely abelian integrals.}. Using that we see that
	\begin{equation}\label{eq:Abel_ints}
	\left(\Abint_P^Q\dfrac{dx}{2y},\Abint_P^Q x\dfrac{dx}{2y}\right)	 = \big(5 \cdot 4457788593 + O(5^{15}),\, 5^{2} \cdot 53059578 + O(5^{15}) \big)
	\end{equation}
	where $P = \big(-1,62455561379272 + O(5^{20})\big)$ and $Q = \big(6,29140889570072 + O(5^{20})\big)$.
	We will compute these integrals using our methods and compare the results.
\end{itemize}

We first compute the fixed points and generators of a Whittaker group that
  parameterizes our curve, and the corresponding period matrix. We will
  work within the field $K = \Q_5(\pi)$ where $\pi$ satisfies $\pi^4$ = 5.
  Our code gives that the nontrivial fixed points are
\begin{align*}
	b_0 &= \pi^{4} \cdot (147218987704020546367 + 76517089843750000000 \cdot \pi^{2}) + O(\pi^{120}), \\
	a_1 &= \pi^{2} \cdot (270181800544006476117 + 677609166444683833921 \cdot \pi^{2}) + O(\pi^{120}), \\
	b_1 &= \pi^{2} \cdot (866164628265668733832 + 653861286121912906612 \cdot \pi^{2}) + O(\pi^{120})	
\end{align*}
and that the matrices
\begin{small}
\[
w_1 = 
\left(\begin{array}{rr}
	\pi^{-8} \cdot (566260181145554 + 507354736328125 \cdot \pi^{2}) & \pi^{-4} \cdot (28220976712906 + 328063964843750 \cdot \pi^{2}) \\
	\pi^{-8} \cdot (1439946765189457 + 2227783203125000 \cdot \pi^{2}) & \pi^{-4} \cdot (14110488356453 + 164031982421875 \cdot \pi^{2})
\end{array}\right) + O(\pi^{80})
\]
\[
w_2 = 
\left(\begin{array}{rr}
	\pi^{-4} \cdot (384303682828926 + 241352744313936 \cdot \pi^{2}) & 92591457234474 + 16807499086906 \cdot \pi^{2} \\
	\pi^{-6} \cdot (154765152323318 + 1334338488576612 \cdot \pi^{2}) & \pi^{-2} \cdot (23876747388397 + 257490127521949 \cdot \pi^{2}) 
\end{array}\right) + O(\pi^{80})
\]
\end{small}
generate the corresponding Whittaker group $W$. Moreover, the corresponding period matrix is
\[
Q_W = 
\left(\begin{array}{rr}
	5^{2} \cdot 23054690584 + O(5^{17}) & 5 \cdot 6733107769 + O(5^{16}) \\
	5 \cdot 6733107769 + O(5^{16}) & 5^{2} \cdot 11569554284 + O(5^{17})
\end{array}\right).
\]

Now we can conduct our experiments:
\begin{itemize}
\item \textbf{Test I:} Let $D = (Q)-(P)$ where $P=(1,2)$ and $Q=(1,-2)\in C(\Q)$. The lift of this divisor is
\begin{align*}
	D' &= \Big(\pi^{4} \cdot (161517344335908068786 + 84879185053106998144 \cdot \pi^{2})\Big) \\
	& \ \ \ - \Big(\pi^{2} \cdot (205078598150168020669 + 147110824295460735763 \cdot \pi^{2})\Big) + O(\pi^{120}).
\end{align*}
The $\mathcal{L}$-invariant is
\begin{align*}
	\mathcal{L} = \log(Q_W) \cdot \ord(Q_W)^{-1} &= 
	\left(\begin{array}{rr}
		5 \cdot 861817323 + O(5^{15}) & 5 \cdot 3360896081 + O(5^{15}) \\
		5 \cdot 3360896081 + O(5^{15}) & 5 \cdot 1091212433 + O(5^{15})
	\end{array}\right)\cdot \left(\begin{array}{rr}
		2 & 1 \\
		1 & 2
	\end{array}\right)^{-1} \\
	&= \left(\begin{array}{rr}
		5^{2} \cdot 704651321 + O(5^{15}) & 5 \cdot 6022335363 + O(5^{15}) \\
		5 \cdot 3911365118 + O(5^{15}) & 5^{2} \cdot 328336294 + O(5^{15})
	\end{array}\right).
\end{align*}
The images of this lift under the functions $u_1$ and $u_2$ are
\[\left(u_1(\tilde D),u_2(\tilde D)\right) = \big(5 \cdot 18104738516 + O(5^{16}),\,5 \cdot 19143541241 + O(5^{16})\big).\]
Finally, using the formula in Theorem~\ref{T:AbIntMum}, we see that
\begin{align*}
	\left(\Abint_D\eta_1,\Abint_D\eta_2\right) &= \left(\log
	u_1(\tilde D), \log u_2(\tilde D)\right)
	-\mathcal{L}\cdot \left( \ord
	u_1(\tilde D),\ord u_2(\tilde D)\right) \\
	&= \big(5 \cdot 3442076343 + O(5^{15}),\,5 \cdot 5553046588 + O(5^{15})\big) - \mathcal{L}\cdot \big(1,\,1\big) = \big(O(5^{15}),\,O(5^{15})\big).
\end{align*}

\item \textbf{Test II:} We first determine the change of basis matrix, i.e., the matrix $M \in \GL_2(\Q_5)$ with the property that
\begin{equation}\label{eq:ChangeOfBasis}
\left(
\begin{array}{c}
	\dfrac{dx}{2y} \\
	x\dfrac{dx}{2y}
\end{array}
\right)
= M\cdot
\left(
\begin{array}{c}
	\eta_1 \\
	\eta_2
\end{array}
\right).
\end{equation}
Consider the degree~$0$ divisors $D_P = (P')-(P)$ and $D_Q = (Q')-(Q)$ on $C_{\Q_5}$ where $P' = \big(4, 10618881637377 + O(5^{20})\big)$ and $Q' = \big(11, 84476472732667 + O(5^{20})\big)$.
The equality~\eqref{eq:ChangeOfBasis} implies that we should have
\[
\left(
\begin{array}{cc}
	\displaystyle\Abint_{D_P}\dfrac{dx}{2y} & \displaystyle\Abint_{D_Q}\dfrac{dx}{2y} \\
	\displaystyle\Abint_{D_P}x\dfrac{dx}{2y} & \displaystyle\Abint_{D_Q}x\dfrac{dx}{2y}
\end{array}
\right)
= M\cdot
\left(
\begin{array}{cc}
	\displaystyle\Abint_{D_P}\eta_1 & \displaystyle\Abint_{D_Q}\eta_1 \\
	\displaystyle\Abint_{D_P}\eta_2 & \displaystyle\Abint_{D_Q}\eta_2
\end{array}
\right).
\] 
Algorithm~\eqref{A:AbelianIntegrals} allows us to compute the integrals on
the right-hand side. On the other hand, the integrals on the left-hand side
are integrals between points in the same residue class, and hence can be
easily computed as ``tiny'' integrals, as described in
\cite[Algorithms~1~\&~3]{Bal15}\footnote{In \emph{loc. cit.}, the notion of
``Coleman" integration is discussed, which is the
same as Abelian integration in the case of good reduction. 
Since the points $P,P',Q,Q'$ reduce to smooth points of the special fiber under the reduction map, \cite[Algorithms~1~\&~3]{Bal15} apply verbatim to our setting as well.}. Our computations give
\[M = 
\left(\begin{array}{rr}
	4618661532 + O(5^{14}) & 2491321374 + O(5^{14}) \\
	5^{2} \cdot 232513793 + O(5^{14}) & 1879285442 + O(5^{14})
\end{array}\right).\]

We can now compute the desired integrals. Let $D = (Q)-(P)$. Using Theorem~\ref{T:AbIntMum} once again, we see that 
\[
\left(\Abint_D\eta_1,\Abint_D\eta_2\right)
= \big(5 \cdot 3734524359 + O(5^{15}),\, 5^{2} \cdot 1102748129 + O(5^{15}) \big).
\]
Combining this with~\eqref{eq:ChangeOfBasis}, we get
\[
\left(\Abint_D\dfrac{dx}{2y},\Abint_Dx\dfrac{dx}{2y}\right)	 = \big(5 \cdot
4457788593 + O(5^{15}),\, 5^{2} \cdot 53059578 + O(5^{15}) \big)\,,
\]
which is the same as Equation~\eqref{eq:Abel_ints}.
\end{itemize}

 The complete computation can be found in the files
\texttt{AbelianIntegrals.sage} and \texttt{AbelianIntegralsSupplement.sage}.

\end{ex}

\begin{ex}\label{E:}
  Let $C/\Q$ be the genus~2 curve whose Jacobian $J$ is the modular abelian
  variety associated to the newform
  \cite[\href{https://www.lmfdb.org/ModularForm/GL2/Q/holomorphic/145/2/a/b/}{145.2.a.b}]{LMFDB}.
  We thank Andrew Sutherland for sharing this example (and many
  others) from his unpublished database of genus~2 curves with us.
  We start with the equation
  \[
y^2 = -351x^6 + 918x^5 - 837x^4 + 394x^3 - 119x^2 + 20x\,.
  \]
  The curve $C$ is a Mumford curve over $\Q_5$ of reduction type (a). We
  first find a model of $C$ pver $\Q_5$ in strong Kadziela position and we
  compute the fixed points and good generators of the corresponding
  Whittaker group via the methods in Section~\ref{S:FixedFromRam}. We then
  apply Algorithm~\ref{A:SchneiderHeights}
  to compute local (and, as in~\S\ref{AlgoGlobal}, also global) Schneider heights for the following
  divisors
  \begin{align*}
    D &= \left(\frac{1}{12}, \frac{1755}{12^3}\right) - \left(0,0\right)\\
    E &= \left(\frac{1}{3},1\right) - \left(\frac{5}{6},\frac{675}{6^3}\right)\\
    \div\left(\frac{x-1}{2x-1}\right) &= \left(1,5\right)+\left(1,-5\right) -
    \left(\frac12,\frac58\right) - \left(\frac12,
    -\frac58\right)\,.
  \end{align*}
  Let $\rho\colon \Q_5^\times \to \Q_5$ be the Iwasawa branch of the
  logarithm.
We verify numerically that 
  \begin{equation}\label{}
    (D,E)_\rho = (E,D)_\rho = 5\cdot 80537980048677 + O(5^{21})
 \,.
  \end{equation}
This is a nontrivial check, see the formula in Theorem~\ref{T:Werner}. Moreover we
  check that 
  $$(\div(f),E)_\rho = \rho(f(E)) = 5\cdot 91879536598576 + O(5^{21})\,,$$ thus verifying~\eqref{princ}.
    
    Finally, we compute the global Schneider regulator $\reg_{\chi}(J/\Q))$
    defined in~\eqref{reg}, where $\chi$ is the cyclotomic character.
    {\tt Magma}'s~{\tt{MordellWeilGroupGenus2}} shows that
    the Mordell--Weil rank of $J/\Q$ is~2 and that
    $J(\Q)/J(\Q)_{\mathrm{tors}}$ is generated by the classes of the points
    with Mumford representation
    \begin{equation}\label{}
      P_1 = (x^2 - 1/3x, y + 3x),\ \ \ P_2= (x^2 - 5/6x + 1/6, y + 9/4x - 7/4)\,.
    \end{equation}
    We find that the regulator is 
    \begin{equation}\label{}
      \reg_{\chi}(J/\Q)) = \reg_{\chi}(P_1,P_2) = 5^2\cdot 85823815506491 +
      O(5^{22}) \,.
    \end{equation}
As a sanity check, we also compute the regulator
$\reg_{\chi}(Q_1,Q_2)$
    where 
    \begin{equation}\label{}
      Q_1 = (x^2 - 2/3x + 1/9, 2x - 5/3),\ \ \ 
Q_2 = (x^2 - 11/15x + 1/30, -189/50x + 26/25)\,.
    \end{equation}
    which generate a subgroup of index~4, and we find that 
    \begin{equation}\label{}
      \reg_{\chi}(Q_1,Q_2) = 16\cdot \reg_{\chi}(J/\Q)\,, 
    \end{equation}
    as predicted by the quadraticity of the global Schneider height.

  The code for this example can be found in the file
  \texttt{145ab.sage}.
  
\end{ex}

\begin{ex}\label{x039}
	Consider the modular curve $X_0(39)/\Q$. This is a hyperelliptic curve of genus~$3$ with equation
	\[y^2 = (x^4 - 7x^3 + 11x^2 - 7x + 1)(x^4 + x^3 - x^2 + x + 1);\]
	see, for example, \cite[Table~4]{Gal96}. The roots of the defining polynomial lie in the field $K = \Q_3(i,\pi)$ where $i^2 = -1$ and $\pi^2 = 3$. Moreover, over this field, the given curve becomes a Mumford curve and the corresponding (stable) reduction is the union of two projective lines meeting transversally at four points:
	\[\scalebox{0.7}{\begin{tikzpicture}
		\draw[thick] (0,-0.4) to[bend left] (4,0);
		\draw[thick] (4,0) to[bend right] (8,0);
		\draw[thick] (8,0) to[bend left] (12,-0.4);
		
		\draw[thick] (0,0.4) to[bend right] (4,0);
		\draw[thick] (4,0) to[bend left] (8,0);
		\draw[thick] (8,0) to[bend right] (12,0.4);
	\end{tikzpicture}}\]
	
	The Jacobian $J_0(39)/\Q$ satisfies $J_0(39)(\Q)\simeq \Z/28\Z$). The curve has four cusps; these are
	$(0,\pm1)$ and $\infty_{\pm}$ on our model. The
	latter are the points $(1:\pm1:0)$ on the closure of our model in the
	projectve plane with respective weights $1,4,1$ attached to $X,Y,Z$.
	Hence the divisor $D\colonequals (0,1)-(\infty_+)$ represents a point $P
	\in J_0(39)(\Q)$ of finite order and $E\colonequals (0,-1)-(\infty_-)$
  represents $-P$ (in fact $\ord(P)=14$).
	Therefore the global $3$-adic height pairing $\langle D,E\rangle_\chi$
	must vanish, where $\chi$ is the cyclotomic character, say.
	Using {\url{https://github.com/emresertoz/neron-tate}}, we find that
	$\langle D,E\rangle_{\chi_v}=0$ for all primes $v\ne 3$. In this example, using our
	code, we will see that $\langle D,E\rangle_{\chi_p}=0$ as well.
	
	One can easily check that, by applying a suitable linear fractional transformation, the modular curve $X_0(39)$ can be transformed to a curve of the form
	\[y^2 = x(x-1)(x-r_0)(x-r_1)(x-r_2)(x-r_3)(x-r_4)\]
	where the branch points are in strong Kadziela position. 
  More precisely,
  the $r_i\in K$ satisfy
	\[v(r_0) = 3, \ \ \, v(r_1) = v(r_2) = v(r_3) = v(r_4) = 1.\]

  \begin{lemma}
    The curve $X_0(39)$ can be parameterized by a Whittaker group with fixed points $\{0,b_0,a_1,b_1,a_2,b_2,1,\infty\}$ that satisfy
		\[v(b_0) = 3, \ \ \, v(a_1) = v(b_1) = v(a_2) = v(b_2) = 1.\]
	\end{lemma}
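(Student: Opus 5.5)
We want to show that the genus~3 curve $X_0(39)$, in the model with branch points $\{0,r_0,\dots,r_4,1,\infty\}$, is uniformized by a Whittaker group whose fixed points have the stated valuations. The plan is to combine the existence of a Whittaker group with the approximation results of \S\ref{S:Initial}.

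\textbf{Step 1: a Whittaker group with fixed points in Kadziela position.} By \cite[Theorem~2.7]{vdPT26}, $X_0(39)$ over $K$ is uniformized by some Whittaker group $W$ with fixed point set $\T$ in good position. By Corollary~\ref{cor:KadzielaShape} we may conjugate so that $\T=\{0,b_0,a_1,b_1,a_2,b_2,1,\infty\}$ is in Kadziela position. Composing with the transformation that puts $\B_C$ in the given form, we may also arrange $F=F_{0,1}$ with $F(0)=0$, $F(1)=\infty$, $F(\infty)=1$. This normalization has to be checked, but it is what the setup of \S\ref{S:FixedFromRam} assumes.

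\textbf{Step 2: show $\T$ is in weak Kadziela position.} This is the main obstacle, since \S\ref{S:Initial} only gives information under that hypothesis, and branch positions do not in general determine fixed positions (Example~\ref{E:KadBad}). The stable reduction is two lines meeting in four points, which is the closed disk configuration. I would first try to argue that this forces $\T$ to be in closed disk position, using the correspondence between the reduction of $\Omega/\Gamma$ and the tree of $\B_C$ \cite[Chapter~IX]{GP80}, in the same way type (a) is treated in \S\ref{S:g2mum} for genus~2. Closed disk position together with Kadziela position gives strong Kadziela position.

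If that argument cannot be made rigorous, the fallback is computational. Compute $\T$ via Hensel lifting (Theorem~\ref{thm:hensel}), read off the valuations and the quantities $d_{ij}$, and verify a posteriori that $d_{ij}<1$, so that $\T$ is in strong Kadziela position. Corollary~\ref{P:Kad} then gives good position. Since Theorem~\ref{T:approx} applies, the computed group genuinely uniformizes the curve, as discussed at the end of \S\ref{S:Hensel}.

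\textbf{Step 3: read off the valuations.} In strong Kadziela position, Corollary~\ref{MainApproxThm} gives $F(b_0)\equiv-4b_0$ and $F(z)\equiv-2z \pmod{\pi^2}$ for the other $z$. More precisely, Proposition~\ref{prop:v(F(z))>=v(z)} and Lemma~\ref{lem:L1(z)_contribution} control when $v(F(z))>v(z)$. This can happen only if some $1+R_iS_i(z)\equiv0$, which requires $|a_i|<|b_i|$.

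We now match $F(\T\setminus\{0,1,\infty\})=\{r_0,\dots,r_4\}$.
\begin{itemize}
\item Suppose some $|a_i|<|b_i|$. By analogy with Proposition~\ref{P:approxg2}(2), the images $F(a_i)$ and $F(b_i)$ would then have distinct valuations, and $F(b_0)$ would have valuation differing from $v(b_0)$ in a way incompatible with four branch points of equal valuation $1$. I would carry out the first-order computation from that proof for $i=1,2$, using $R_i\equiv-1+2a_i/b_i$, to derive this contradiction.
\item Hence $|a_i|=|b_i|$ for $i=1,2$, and no factor $1+R_iS_i(z)$ vanishes mod $\pi$. Therefore $v(F(z))=v(z)$ for $z\in\{a_i,b_i\}$.
\end{itemize}
It remains to treat the mixed terms $1+R_iS_i(a_j)$ with $j\ne i$, which are not covered by the paper's remark. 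For these I would use $|a_j-s_i(1)|\ge1$ together with $v(R_i)=0$ and argue directly that they are units. The remaining valuations then follow: $b_0$ is the unique fixed point mapping to $r_0$, so $v(b_0)=v(r_0)=3$, and the other four fixed points have valuation $1$.
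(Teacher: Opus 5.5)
The failing step is your treatment of the mixed factors $1+R_iS_i(a_j)$ with $j\neq i$. You assert $v(R_i)=0$, but $|a_i|=|b_i|$ does not give this: Lemma~\ref{lem:Rimodpi} only excludes $R_i\equiv\pm1\bmod\pi$. In the strong Kadziela position you aim for in Step~2 the claim is in fact false, because $d_{i0}<1$ forces $|r_i|<|c_i|$, i.e.\ $v(R_i)\ge 1$. And if $R_i$ were a unit, a bound like $|a_j-s_i(1)|\ge1$ would only control the size of $R_iS_i(a_j)$, not its residue. It therefore cannot show that $1+R_iS_i(a_j)$ is a unit, which is exactly the point the paper says it cannot settle. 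Your first bullet, ruling out $|a_i|<|b_i|$, is also only sketched by analogy.

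The missing idea is that you never need the equality $v(F(z))=v(z)$ for $a_i,b_i$; the one-sided inequality of Proposition~\ref{prop:v(F(z))>=v(z)} is enough. In Kadziela position over $K$, every nontrivial fixed point has $v(z)\ge 1$, and $v(b_0)>v(a_1)\ge 1$ gives $v(b_0)\ge 2$. So $v(F(b_0))\ge 2$, which forces $F(b_0)=r_0$, the only branch point of valuation $>1$. The other four fixed points then map to branch points of valuation $1$, so $1\le v(z)\le v(F(z))=1$. In particular $|a_i|=|b_i|$. Lemma~\ref{lem:L1(z)_contribution}, which covers $z=b_0$ for every $i$, then shows that no factor raises $v(F(b_0))$, so $v(b_0)=v(r_0)=3$. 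This is the paper's argument, and it needs neither the mixed terms nor a separate exclusion of $|a_i|<|b_i|$.

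On Step~2, you are right that Proposition~\ref{prop:v(F(z))>=v(z)} rests on weak Kadziela position, via Proposition~\ref{prop:valoflgamma}. The paper's proof uses it without checking this separately, and confirms strong Kadziela position only afterwards from the computed fixed points. Your primary route, deducing closed disk position of $\T$ from the stable reduction type, is not supported by anything in the paper. The genus~2 discussion in \S\ref{S:g2mum} concerns $\B_C$, and Example~\ref{E:KadBad} shows that $\B_C$ and $\T$ need not be in the same position. Your computational fallback matches what the paper does. But once $\T$ has been computed and checked, you can read the valuations off directly, so Step~3 becomes redundant in that branch.
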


	\begin{proof}
		By Corollary~\ref{cor:KadzielaShape}, we may assume that
		\[1 \leq v(b_2) \leq v(a_2) \leq v(b_1) \leq v(a_1) < v(b_0).\]
		Using Proposition~\ref{prop:v(F(z))>=v(z)}, we obtain $v(F(b_0)) \geq v(b_0) \geq 2$,
		which implies that $b_0$ is mapped to $r_0$ under $F$, because $r_0$ is the only root with valuation greater than $1$. Therefore, the other fixed points are mapped to $r_1,r_2,r_3,r_4$ in some order. Since all of these roots have valuation $1$, using Proposition~\ref{prop:v(F(z))>=v(z)} once again, we conclude that $v(a_1) = v(b_1) = v(a_2) = v(b_2) = 1$. 
		Finally, Lemma~\ref{lem:L1(z)_contribution} implies that $v(b_0) = 3$.
	\end{proof}
	
	Our code gives, with the help of this additional information, that the
  nontrivial fixed points are
	\begin{align*}
		b_0 &= \pi^{3} \cdot (169568969450082833675 \cdot i + 3 \cdot 81345423724756740290 \cdot \pi) + O(\pi^{90}), \\
		a_1 &= \pi \cdot (1817655172025484076360 \cdot i + 669944573840645064311 \cdot \pi) + O(\pi^{90}), \\
		b_1 &= \pi \cdot (1355295026970713034718 \cdot i + 433994741349790079549 \cdot \pi) + O(\pi^{90}), \\
		a_2 &= \pi \cdot (1818191821868583397124 \cdot i + 548477140915253215427 \cdot \pi) + O(\pi^{90}), \\
		b_2 &= \pi \cdot (2411648012235807762518 \cdot i + 95321642625024882380 \cdot \pi) + O(\pi^{90}).
	\end{align*}
	One can now easily check that these fixed points are in strong Kadziela position. The generators of the Whittaker group are
	\begin{small}
		\[
		w_1 = 
		\left(\begin{array}{rr}
			\pi^{3} \cdot (52793533972127 \cdot i + 3 \cdot 3103846656698 \cdot \pi) & \pi^{3} \cdot (31673686785512 \cdot i + 3 \cdot 1417904171591 \cdot \pi) \\
			2 & 83803859730842 + 3 \cdot 52793533972127 \cdot i \cdot \pi
		\end{array}\right) + O(\pi^{60}),
		\]
		\[
		w_2 = 
		\left(\begin{array}{rr}
			\pi \cdot (52144998882143 \cdot i + 66988365732322 \cdot \pi) & \pi \cdot (22184621499875 \cdot i + 30014311084481 \cdot \pi) \\
			2 & 200965097196962 + 52144998882143 \cdot i \cdot \pi
		\end{array}\right) + O(\pi^{60}),
		\]
		\[
		w_3 = 
		\left(\begin{array}{rr}
			\pi \cdot (62883996424702 \cdot i + 67395973180846 \cdot \pi) & \pi \cdot (11576205771028 \cdot i + 68215330456187 \cdot \pi) \\
			2 & 202187919542534 + 62883996424702 \cdot i \cdot \pi
		\end{array}\right) + O(\pi^{60}).
		\]
	\end{small}
	Moreover, the corresponding period matrix is
	\[
	Q = 
	\left(\begin{array}{rrr}
		3^{3} \cdot 2815951877525 & 3 \cdot 50001091700867 & 3 \cdot 646506176903 \\
		3 \cdot 50001091700867 & 3^{4} \cdot 1995748201645 & 3 \cdot 50001091700867 \\
		3 \cdot 646506176903 & 3 \cdot 50001091700867 & 3^{3} \cdot 2815951877525
	\end{array}\right)
	+ O(3^{30}),
	\]
	and the respective lifts of the divisors $D,E$ are
	\begin{align*}
		D' &= \Big(66967506124175 + 158782449129450 \cdot i + (187941388166341 + 96519270558011 \cdot i) \cdot \pi\Big) \\
		   & \ \ \ - \Big(\pi \cdot (187941388166341 + 109371861536638 \cdot i + (46307875323492 + 52927483043150 \cdot i) \cdot \pi)\Big) + O(\pi^{60}), \\
		E' &= \Big(\pi \cdot (17949743928308 + 109371861536638 \cdot i + (46307875323492 + 15702894321733 \cdot i) \cdot \pi)\Big) \\
		   & \ \ \ - \Big(66967506124175 + 47108682965199 \cdot i + (17949743928308 + 96519270558011 \cdot i) \cdot \pi\Big) + O(\pi^{60}).
	\end{align*}
	Finally, using Theorem~\ref{T:Werner}, our computations show that $\langle D,E\rangle_{\chi_p} = O(3^{30})$. The code for this example can be found in the file
	\texttt{X0(39).sage}.
	\begin{rk}\label{R:}
    In~\cite[\S9.1]{vdPT26}, the authors compute a first-order-approximation to a set
    of fixed points that are not in Kaziela position. 
  \end{rk}
	
\end{ex}

\bibliographystyle{alpha}
\bibliography{mumford}

\newcommand{\etalchar}[1]{$^{#1}$}
\begin{thebibliography}{{LMF}25}

\bibitem[AM19]{AM19}
Laia Amor\'os and Piermarco Milione.
\newblock Mumford curves covering {$p$}-adic {S}himura curves and their
  fundamental domains.
\newblock {\em Trans. Amer. Math. Soc.}, 371(2):1119--1149, 2019.

\bibitem[Bal15]{Bal15}
Jennifer~S. Balakrishnan.
\newblock Coleman integration for even-degree models of hyperelliptic curves.
\newblock {\em LMS J. Comput. Math.}, 18(1):258--265, 2015.

\bibitem[BB12]{BB12}
Jennifer~S. Balakrishnan and Amnon Besser.
\newblock Computing local $p$-adic height pairings on hyperelliptic curves.
\newblock {\em International Mathematics Research Notices},
  2012(11):2405--2444, 2012.

\bibitem[BB15]{BB15}
Jennifer~S. Balakrishnan and Amnon Besser.
\newblock Coleman-{G}ross height pairings and the {$p$}-adic sigma function.
\newblock {\em J. Reine Angew. Math.}, 698:89--104, 2015.

\bibitem[BBK10]{BBK10}
Jennifer~S. Balakrishnan, Robert~W. Bradshaw, and Kiran~S. Kedlaya.
\newblock Explicit {C}oleman integration for hyperelliptic curves.
\newblock In {\em Algorithmic number theory}, volume 6197 of {\em Lecture Notes
  in Comput. Sci.}, pages 16--31. Springer, Berlin, 2010.

\bibitem[BBM17]{BBM17}
Jennifer~S. Balakrishnan, Amnon Besser, and J.~Steffen M\"uller.
\newblock Computing integral points on hyperelliptic curves using quadratic
  {C}habauty.
\newblock {\em Math. Comp.}, 86(305):1403--1434, 2017.

\bibitem[BCP97]{Magma}
Wieb Bosma, John Cannon, and Catherine Playoust.
\newblock The {M}agma algebra system. {I}. {T}he user language.
\newblock {\em J. Symbolic Comput.}, 24(3-4):235--265, 1997.
\newblock Computational algebra and number theory (London, 1993).

\bibitem[BD18]{BD18}
Jennifer~S. Balakrishnan and Netan Dogra.
\newblock Quadratic {C}habauty and rational points, {I}: {$p$}-adic heights.
\newblock {\em Duke Math. J.}, 167(11):1981--2038, 2018.
\newblock With an appendix by J. Steffen M\"uller.

\bibitem[BD21]{BD21}
Matthew Bisatt and Tim Dokchitser.
\newblock Tame torsion and the tame inverse {G}alois problem.
\newblock {\em Math. Ann.}, 381(3-4):1439--1453, 2021.

\bibitem[BDM{\etalchar{+}}19]{BDMTV19}
Jennifer Balakrishnan, Netan Dogra, J.~Steffen M\"{u}ller, Jan Tuitman, and Jan
  Vonk.
\newblock Explicit {C}habauty-{K}im for the split {C}artan modular curve of
  level 13.
\newblock {\em Ann. of Math. (2)}, 189(3):885--944, 2019.

\bibitem[BDM{\etalchar{+}}23]{BDMTV23}
Jennifer~S. Balakrishnan, Netan Dogra, J.~Steffen M\"uller, Jan Tuitman, and
  Jan Vonk.
\newblock Quadratic {C}habauty for modular curves: algorithms and examples.
\newblock {\em Compos. Math.}, 159(6):1111--1152, 2023.

\bibitem[Bes21]{Bes21}
Alex~J. Best.
\newblock Square root time {C}oleman integration on superelliptic curves.
\newblock In {\em Arithmetic geometry, number theory, and computation}, Simons
  Symp., pages 105--129. Springer, Cham, [2021] \copyright 2021.

\bibitem[BKM25]{BKM25}
Francesca Bianchi, Enis Kaya, and J.~Steffen M\"uller.
\newblock Algorithms for {$p$}-adic heights on hyperelliptic curves of
  arbitrary reduction.
\newblock {\em Res. Number Theory}, 11(1):Paper No. 31, 21, 2025.

\bibitem[BKM26]{BKM}
Francesca Bianchi, Enis Kaya, and J.~Steffen Müller.
\newblock Coleman--{G}ross heights and $p$-adic {N}\'eron functions on
  {J}acobians of genus $2$ curves, 2026.
\newblock Preprint available at \url{https://arxiv.org/abs/2310.15049}.

\bibitem[BMS16]{BMS16}
Jennifer~S. Balakrishnan, J.~Steffen M\"{u}ller, and William~A. Stein.
\newblock A {$p$}-adic analogue of the conjecture of {B}irch and
  {S}winnerton-{D}yer for modular abelian varieties.
\newblock {\em Math. Comp.}, 85(298):983--1016, 2016.

\bibitem[BMS25]{BMS25}
Amnon Besser, J.~Steffen M\"uller, and Padmavathi Srinivasan.
\newblock p-adic adelic metrics and quadratic {C}habauty {I}.
\newblock {\em J. Reine Angew. Math.}, 828:223--305, 2025.

\bibitem[BT20]{BT20}
Jennifer~S. Balakrishnan and Jan Tuitman.
\newblock Explicit {C}oleman integration for curves.
\newblock {\em Math. Comp.}, 89(326):2965--2984, 2020.

\bibitem[CJ23]{CJ23}
Rudolf Chow and Frazer Jarvis.
\newblock A {$p$}-adic study of the {R}ichelot isogeny with applications to
  periods of certain genus 2 curves.
\newblock {\em Ramanujan J.}, 61(3):935--956, 2023.

\bibitem[CMR11]{CMR11}
Alan Carey, Matilde Marcolli, and Adam Rennie.
\newblock Modular index invariants of {M}umford curves.
\newblock In {\em Noncommutative geometry, arithmetic, and related topics},
  pages 31--73. Johns Hopkins Univ. Press, Baltimore, MD, 2011.

\bibitem[Col85a]{Col85Chab}
Robert~F. Coleman.
\newblock Effective {C}habauty.
\newblock {\em Duke Math. J.}, 52(3):765--770, 1985.

\bibitem[Col85b]{Col85}
Robert~F. Coleman.
\newblock Torsion points on curves and {$p$}-adic abelian integrals.
\newblock {\em Ann. of Math. (2)}, 121(1):111--168, 1985.

\bibitem[Gal96]{Gal96}
Steven~D. Galbraith.
\newblock {\em Equations for modular curves}.
\newblock PhD thesis, University of Oxford, 1996.

\bibitem[GM25]{GM25}
Stevan Gajovi\'c and J.~Steffen M\"uller.
\newblock Computing {$p$}-adic heights on hyperelliptic curves.
\newblock {\em Math. Comp.}, 94(354):2059--2088, 2025.

\bibitem[GvdP80]{GP80}
Lothar Gerritzen and Marius van~der Put.
\newblock {\em Schottky groups and {M}umford curves}, volume 817 of {\em
  Lecture Notes in Mathematics}.
\newblock Springer, Berlin, 1980.

\bibitem[Hol12]{Hol12}
David Holmes.
\newblock Computing {N}\'eron-{T}ate heights of points on hyperelliptic
  {J}acobians.
\newblock {\em J. Number Theory}, 132(6):1295--1305, 2012.

\bibitem[Kad07a]{KadPaper}
Samuel Kadziela.
\newblock Rigid analytic uniformization of curves and the study of isogenies.
\newblock {\em Acta Appl. Math.}, 99(2):185--204, 2007.

\bibitem[Kad07b]{Kad07}
Samuel Kadziela.
\newblock {\em Rigid analytic uniformization of hyperelliptic curves}.
\newblock ProQuest LLC, Ann Arbor, MI, 2007.
\newblock Thesis (Ph.D.)--University of Illinois at Urbana-Champaign.

\bibitem[Kay22]{Kay22}
Enis Kaya.
\newblock Explicit {V}ologodsky integration for hyperelliptic curves.
\newblock {\em Math. Comp.}, 91(337):2367--2396, 2022.

\bibitem[Kim05]{kim05:motivic_fundamental_group}
Minhyong Kim.
\newblock The motivic fundamental group of {$\BP^1\setminus\{0,1,\infty\}$} and
  the theorem of {S}iegel.
\newblock {\em Invent. Math.}, 161(3):629--656, 2005.

\bibitem[Kim09]{kim09:unipotent_albanese}
Minhyong Kim.
\newblock The unipotent {A}lbanese map and {S}elmer varieties for curves.
\newblock {\em Publ. Res. Inst. Math. Sci.}, 45(1):89--133, 2009.

\bibitem[KK22]{KK22}
Eric Katz and Enis Kaya.
\newblock {$p$}-adic integration on bad reduction hyperelliptic curves.
\newblock {\em Int. Math. Res. Not. IMRN}, (8):6038--6106, 2022.

\bibitem[KRZB16]{KRBZ16}
Eric Katz, Joseph Rabinoff, and David Zureick-Brown.
\newblock Uniform bounds for the number of rational points on curves of small
  {M}ordell-{W}eil rank.
\newblock {\em Duke Math. J.}, 165(16):3189--3240, 2016.

\bibitem[Lan88]{Lan88}
Serge Lang.
\newblock {\em Introduction to {A}rakelov theory}.
\newblock Springer-Verlag, New York, 1988.

\bibitem[{LMF}25]{LMFDB}
The {LMFDB Collaboration}.
\newblock The {L}-functions and modular forms database.
\newblock \url{https://www.lmfdb.org}, 2025.
\newblock [Online; accessed 16 December 2025].

\bibitem[MD73]{MD73}
Yuri Manin and Vladimir Drinfeld.
\newblock Periods of {$p$}-adic {S}chottky groups.
\newblock {\em J. Reine Angew. Math.}, 262/263:239--247, 1973.

\bibitem[MP12]{MP12}
William McCallum and Bjorn Poonen.
\newblock The method of {C}habauty and {C}oleman.
\newblock In {\em Explicit methods in number theory}, volume~36 of {\em Panor.
  Synth\`eses}, pages 99--117. Soc. Math. France, Paris, 2012.

\bibitem[MR15]{MR15}
Ralph Morrison and Qingchun Ren.
\newblock Algorithms for {M}umford curves.
\newblock {\em J. Symbolic Comput.}, 68(part 2):259--284, 2015.

\bibitem[MT83]{MT83}
Barry Mazur and John Tate.
\newblock Canonical height pairings via biextensions.
\newblock In {\em Arithmetic and geometry, {V}ol. {I}}, volume~35 of {\em
  Progr. Math.}, pages 195--237. Birkh\"auser Boston, Boston, MA, 1983.

\bibitem[MTT86]{mtt}
Barry Mazur, John Tate, and Jeremy Teitelbaum.
\newblock On {$p$}-adic analogues of the conjectures of {B}irch and
  {S}winnerton-{D}yer.
\newblock {\em Invent. Math.}, 84(1):1--48, 1986.

\bibitem[M{\"u}l14]{Mul14}
J.~Steffen M{\"u}ller.
\newblock Computing canonical heights using arithmetic intersection theory.
\newblock {\em Math. Comp.}, 83(285):311--336, 2014.

\bibitem[Mum72]{Mu72}
David Mumford.
\newblock An analytic construction of degenerating curves over complete local
  rings.
\newblock {\em Compositio Math.}, 24:129--174, 1972.

\bibitem[MX26]{MX25}
Marc Masdeu and Xavier Xarles.
\newblock Efficient computation of non-archimedean theta functions.
\newblock {\em Math. Comp.}, 95(357):457--475, 2026.

\bibitem[Sch82]{Sch82}
Peter Schneider.
\newblock {$p$}-adic height pairings. {I}.
\newblock {\em Invent. Math.}, 69(3):401--409, 1982.

\bibitem[Sto19]{Sto19}
Michael Stoll.
\newblock Uniform bounds for the number of rational points on hyperelliptic
  curves of small {M}ordell-{W}eil rank.
\newblock {\em J. Eur. Math. Soc. (JEMS)}, 21(3):923--956, 2019.

\bibitem[SW13]{SW13}
William Stein and Christian Wuthrich.
\newblock Algorithms for the arithmetic of elliptic curves using {I}wasawa
  theory.
\newblock {\em Math. Comp.}, 82(283):1757--1792, 2013.

\bibitem[Tei88]{Tei88}
Jeremy Teitelbaum.
\newblock {$p$}-adic periods of genus two {M}umford-{S}chottky curves.
\newblock {\em J. Reine Angew. Math.}, 385:117--151, 1988.

\bibitem[{The}26]{sagemath}
{The Sage Developers}.
\newblock {\em {S}ageMath, the {S}age {M}athematics {S}oftware {S}ystem
  ({V}ersion 10.8)}, 2026.
\newblock {\tt https://www.sagemath.org}.

\bibitem[vBHM20]{vBHM20}
Raymond van Bommel, David Holmes, and J.~Steffen M\"{u}ller.
\newblock Explicit arithmetic intersection theory and computation of
  {N}\'{e}ron-{T}ate heights.
\newblock {\em Math. Comp.}, 89(321):395--410, 2020.

\bibitem[vdP79]{vdP78}
Marius van~der Put.
\newblock $p$-adic {W}hittaker groups.
\newblock {\em Groupe {\'e}tude Anal. Ultram{\'e}tr.}, 6(15):1--6, 1978-79.

\bibitem[vdPT26]{vdPT26}
Marius van~der Put and Jaap Top.
\newblock Whittaker groups and hyperelliptic curves, 2026.
\newblock Preprint available at \url{https://arxiv.org/abs/2605.22406}.

\bibitem[vS81]{VanSteen}
Guido van Steen.
\newblock {\em Hyperelliptic curves defined by Schottky groups over a
  non-archimedean valued field}.
\newblock PhD thesis, University of Antwerp, 1981.

\bibitem[Wer96]{Wer96}
Annette Werner.
\newblock Local heights on {M}umford curves.
\newblock {\em Math. Ann.}, 306(4):819--831, 1996.

\bibitem[Wer97]{Wer97}
Annette Werner.
\newblock Local heights on abelian varieties with split multiplicative
  reduction.
\newblock {\em Compositio Math.}, 107(3):289--317, 1997.

\bibitem[Wer98]{Wer98}
Annette Werner.
\newblock Local heights on abelian varieties and rigid analytic uniformization.
\newblock {\em Doc. Math.}, 3:301--319, 1998.

\bibitem[Yel24a]{Yel23}
Jeffrey Yelton.
\newblock Branch points of split degenerate superelliptic curves {I}:
  construction of {S}chottky groups, 2024.
\newblock Preprint available at \url{https://arxiv.org/abs/2306.17823}.

\bibitem[Yel24b]{Yel24}
Jeffrey Yelton.
\newblock Branch points of split degenerate superelliptic curves {II}: on a
  conjecture of {G}erritzen and van der {P}ut, 2024.
\newblock Preprint available at \url{https://arxiv.org/abs/2407.11303}.

\bibitem[Zar96]{Zar96}
Yuri~G. Zarhin.
\newblock {$p$}-adic abelian integrals and commutative {L}ie groups.
\newblock volume~81, pages 2744--2750. 1996.
\newblock Algebraic geometry, 4.

\end{thebibliography}

\end{document}